\pdfoutput=1
\documentclass[a4paper]{amsart}

\usepackage{amsthm}
\usepackage{amssymb}
\usepackage{dsfont}
\usepackage{enumitem}
\usepackage{stmaryrd}
\usepackage[mathcal]{euscript}
\usepackage{tikz-cd} 
\usepackage{microtype}
\usepackage{mathtools}

\usepackage[top=34mm, bottom=34mm, outer=30mm, inner=30mm, marginparwidth=20mm]{geometry}

\numberwithin{equation}{section}

\usepackage{hyperref}
\hypersetup{%
  bookmarksnumbered=true,%
  colorlinks=true,%
  linkcolor=[RGB]{0, 0, 153},
  citecolor=[RGB]{0,153,51},      
  urlcolor=[RGB]{153,0,0},
  filecolor=blue,%
  menucolor=blue,%
  bookmarksopen=true,%
  bookmarksdepth=2,%
  pageanchor=true,
  breaklinks=true,
  pdfusetitle=true}

\usepackage[nameinlink,capitalize]{cleveref}

\usepackage[
sorting=nyt,
backend=bibtex,
style=alphabetic,
url=false,
doi=false,
isbn=false, 
maxnames=50
]{biblatex}
\renewbibmacro{in:}{}
\makeatletter
\@namedef{subjclassname@2020}{\textup{2020} Mathematics Subject Classification}
\makeatother

\theoremstyle{plain}
\newtheorem{thm-intro}{Theorem}
\expandafter\renewcommand\csname thethm-intro\endcsname{\Alph{thm-intro}}

\usepackage{aliascnt}

\theoremstyle{plain}
\newaliascnt{theorem}{equation}
\newtheorem{theorem}[theorem]{Theorem}
\aliascntresetthe{theorem}

\newaliascnt{proposition}{equation}
\newtheorem{proposition}[proposition]{Proposition}
\aliascntresetthe{proposition}

\newaliascnt{defproposition}{equation}

\aliascntresetthe{defproposition}

\newaliascnt{lemma}{equation}
\newtheorem{lemma}[lemma]{Lemma}
\aliascntresetthe{lemma}

\newaliascnt{corollary}{equation}
\newtheorem{corollary}[corollary]{Corollary}
\aliascntresetthe{corollary}

\newaliascnt{conjecture}{equation}

\aliascntresetthe{conjecture}

\theoremstyle{definition}
\newaliascnt{definition}{equation}
\newtheorem{definition}[definition]{Definition}
\aliascntresetthe{definition}

\newaliascnt{example}{equation}

\aliascntresetthe{example}

\newaliascnt{construction}{equation}

\aliascntresetthe{construction}

\newaliascnt{setup}{equation}

\aliascntresetthe{setup}

\newaliascnt{notation}{equation}

\aliascntresetthe{notation}

\newaliascnt{chunk}{equation}

\aliascntresetthe{chunk}

\theoremstyle{remark}
\newaliascnt{remark}{equation}
\newtheorem{remark}[remark]{Remark}
\aliascntresetthe{remark}

\newaliascnt{question}{equation}

\aliascntresetthe{question}

\theoremstyle{remark}
\newtheorem*{ack}{Acknowledgements}
\newtheorem*{conventions}{Conventions}
\newtheorem*{orga}{Organization of the paper}

\crefformat{equation}{(#2#1#3)}
\crefname{theorem}{Theorem}{Theorems}
\crefname{proposition}{Proposition}{Propositions}
\crefname{defproposition}{Definition-Proposition}{Definition-Propositions}
\crefname{lemma}{Lemma}{Lemmas}
\crefname{corollary}{Corollary}{Corollaries}
\crefname{conjecture}{Conjecture}{Conjectures}
\crefname{definition}{Definition}{Definitions}
\crefname{example}{Example}{Examples}
\crefname{construction}{Construction}{Constructions}
\crefname{setup}{Setup}{Setups}
\crefname{notation}{Notation}{Notations}
\crefname{remark}{Remark}{Remarks}
\crefname{question}{Question}{Questions}

\DeclareMathOperator{\Spec}{Spec}

\DeclareMathOperator{\Sing}{Sing}

\DeclareMathOperator{\Hom}{Hom}

\newcommand{\OO}{\mathcal{O}}
\newcommand{\pet}{\pi_1^{\mathrm{\acute et}}}
\newcommand{\piN}{\pi_1^{\mathrm{N}}}
\newcommand{\PP}{\mathbb{P}}
\newcommand{\ZZ}{\mathbb{Z}}

\newcommand{\Aut}{\operatorname{Aut}}

\newcommand{\codim}{\operatorname{codim}}
\newcommand{\Coh}{\operatorname{Coh}}

\renewcommand{\dim}{\operatorname{dim}}

\newcommand{\id}{\operatorname{id}}

\newcommand{\ord}{\operatorname{ord}}

\newcommand{\Pic}{\operatorname{Pic}}
\newcommand{\Cl}{\operatorname{Cl}}

\newcommand{\reg}{\operatorname{reg}}

\newcommand{\mcO}{\mathcal{O}}

\newcommand{\mcX}{\mathcal{X}} 
\newcommand{\mcY}{\mathcal{Y}} 
\newcommand{\mcZ}{\mathcal{Z}}

\newcommand{\bbQ}{\mathbb Q} 
 
\newcommand{\bbZ}{\mathbb Z}

\newcommand{\fm}{\mathfrak{m}}

\newcommand{\cO}{\mathcal{O}}
\newcommand{\cF}{\mathcal{F}}
\newcommand{\cL}{\mathcal{L}}

\newcommand{\cHom}{\mathcal{H}\!om}
\newcommand{\Gm}{\mathbb{G}_{m}}
\newcommand{\Rep}{\operatorname{Rep}}
\newcommand{\Qcoh}{\operatorname{QCoh}}

\newcommand{\isom}{\cong}
\newcommand{\BG}{\mathrm{B}}      

\renewcommand{\le}{\leqslant}
\renewcommand{\ge}{\geqslant}

\iffalse

\newcommand{\Charles}[1]{}

\else

\newcommand{\marg}[1]{\normalsize{{
			\color{red}\footnote{{\color{blue}#1}}}{\marginpar[\vskip
			-.25cm{\color{red}\hfill\thefootnote$\implies$}]{\vskip
				-.2cm{\color{red}$\impliedby$\tiny\thefootnote}}}}}
\newcommand{\Charles}[1]{\marg{(Charles) #1}}

\fi

\title[Fundamental group schemes of globally $F$-regular varieties]{
	Fundamental group schemes\\ of globally $F$-regular varieties
}

\author[Charles~Vial]{Charles Vial}
\address{Fakult\"at f\"ur Mathematik, Universit\"at Bielefeld, D-33501 Bielefeld, Germany
}
\email{vial@math.uni-bielefeld.de}

\thanks{This research was funded by the Deutsche Forschungsgemeinschaft (DFG, German Research Foundation) -- Project-ID~491392403 -- TRR~358.}

\begin{document}

\begin{abstract} 
 We prove that every tower of connected quasi-torsor covers of
a connected normal globally $F$-regular projective variety over an algebraically closed field of positive characteristic stabilizes, and that the Nori fundamental group scheme of its regular locus is finite and linearly reductive. 
The linear reductivity asserts that any quasi-torsor cover is tame, while
 the finiteness is a positive-characteristic analogue of theorems of Xu and Braun on the finiteness of the fundamental group of the smooth locus of a log Fano complex projective variety. 
\end{abstract}

\keywords{Splinters, globally $F$-regular varieties, Nori fundamental group scheme, Quasi-torsors}
\subjclass[2020]{14F35, 13A35, 14G17; (14E20, 14L15, 14J45, 14B05, 14L30)}

\date{\today}

\maketitle

\vspace{-20pt}

\section{Introduction}
Varieties of Fano type and Kawamata log terminal (klt) singularities are among
the basic objects produced by the minimal model program, and a natural measure
of their complexity is the fundamental group. Over the complex numbers this
fundamental group is finite, in two forms, one global and one local, both
attached to the klt condition. A normal projective variety $X$ over a field is
\emph{log Fano} if there is an effective $\bbQ$-divisor $\Delta$ such that the
pair $(X,\Delta)$ is klt and $-(K_X+\Delta)$ is ample.
Globally, Xu
\cite{xu_finite_fundamental_group_klt} proved that the \'etale fundamental group
of the smooth locus of a log Fano complex projective variety is finite\,; locally,
he proved that the local \'etale fundamental group of a complex klt singularity
is finite as well. Braun \cite{braun_finite_fundamental_group_klt}
strengthened both statements to the topological fundamental group, thereby
confirming Koll\'ar's conjecture on the finiteness of the local fundamental group
of a klt singularity. At the level of covers, Greb--Kebekus--Peternell
\cite{greb_kebekus_peternell_etale_fundamental_groups_of_klt_spaces}, building on
Xu's work, proved that every tower of normal, quasi-\'etale covers, that is, finite
covers that are \'etale in codimension one, of a complex quasi-projective klt
variety is eventually \'etale, provided the composed covers are Galois. 
\medskip

Let now $k$ be a field of characteristic $p>0$, with algebraic closure $\bar k$. 
In positive
characteristic, klt singularities are replaced by strongly $F$-regular
singularities (see e.g.\
\cite{hara_watanabe_f_regular_f_pure_vs_log_terminal_log_canonical}), and the
\emph{local} finiteness statements above have been established in this framework.
Carvajal-Rojas--Schwede--Tucker
\cite{carvajal-rojas_schwede_tucker_fundamental_groups_of_f_regular_singularities_via_f_signature}
proved that the \'etale fundamental group of the regular locus of a normal, $F$-finite, strongly
$F$-regular, strictly Henselian local domain $R$ of dimension $d\ge 2$ is finite, of order prime to $p$ and at most $1/s(R)$,
where $s(R)$ denotes the $F$-signature of $R$\,;
Bhatt--Carvajal-Rojas--Graf--Schwede--Tucker
\cite{bhatt_carvajal-rojas_graf_schwede_tucker} then established the analogue of
the theorem of Greb--Kebekus--Peternell and deduced in particular that every $F$-finite Noetherian integral normal strongly $F$-regular scheme admits
a finite quasi-\'etale cover over which every further finite quasi-\'etale cover
is \'etale. In positive characteristic, however, the \'etale fundamental group
captures only part of the picture\,: the natural covers to consider are torsors
under arbitrary finite group schemes, and torsors under \emph{infinitesimal}
group schemes, typically the $\mu_p$- and $\alpha_p$-covers, are invisible to it. In this direction,
Carvajal-Rojas
\cite[Main~Thm.]{carvajal_rojas_finite_torsors_over_strongly_f_regular_singularities} proved
a local extension theorem\,: every local domain $R$ as above, with algebraically closed residue field, admits a
finite local cover $R\subseteq R^\star$ by a strongly $F$-regular local domain such that,
for every finite group scheme $G$ over that residue field whose neutral component is solvable, every
$G$-torsor over a big open subset of $\Spec R^\star$ extends to a $G$-torsor over
$\Spec R^\star$.
\medskip

This paper is concerned with the \emph{global} situation. In the \'etale setting, the global finiteness is again due to Carvajal-Rojas--Schwede--Tucker\,: by \cite[Thm.~D \& Cor.~5.7]{carvajal-rojas_schwede_tucker_fundamental_groups_of_f_regular_singularities_via_f_signature}, the \'etale fundamental group of the regular locus of a globally $F$-regular projective variety over $\bar k$ is finite of order prime to~$p$, with an effective bound on the degrees of the corresponding quasi-\'etale covers.
Recall that a normal variety $X$
over $k$ is \emph{globally $F$-regular} if it is $F$-finite (i.e., the absolute Frobenius $F \colon X \to X$ is finite) and if for every effective divisor $D$ on $X$
there exists $e>0$ such that the composition
$\OO_X\to F^e_*\OO_X\hookrightarrow F^e_*\OO_X(D)$ splits as a morphism of
$\OO_X$-modules. This class of varieties was introduced by Smith
\cite{smith_globally_f_regular_varieties_applications_to_vanishing_theorems_for_quotients_of_fano_varieties},
who observed that reductions modulo $p\gg0$ of smooth Fano complex projective varieties are
globally $F$-regular. The relation with Fano-type varieties was made precise by
Schwede--Smith \cite{schwede_smith_globally_f_regular_and_log_Fano_varieties}\,:
every globally $F$-regular normal projective variety is log Fano
\cite[Thm.~1.1]{schwede_smith_globally_f_regular_and_log_Fano_varieties}\,;
conversely, every log Fano complex projective variety is of globally $F$-regular type
\cite[Thm.~1.2]{schwede_smith_globally_f_regular_and_log_Fano_varieties}. 
Moreover, globally $F$-regular varieties have strongly $F$-regular local rings.
Globally $F$-regular varieties may thus be regarded as the positive-characteristic
incarnation of Fano-type varieties,
and the two main results below as characteristic-$p$ analogues of
Xu's global finiteness theorem that in addition take the infinitesimal torsors into account. Our first result is closer in spirit to \cite{greb_kebekus_peternell_etale_fundamental_groups_of_klt_spaces} and \cite{bhatt_carvajal-rojas_graf_schwede_tucker}.

\begin{thm-intro}\label{thm:main}
	Let $X$ be a connected normal globally $F$-regular projective variety over $k$. Consider a sequence of finite surjective morphisms of integral $S_2$ schemes
	\[
	X=Y_0\xleftarrow{\ f_0\ } Y_1\xleftarrow{\ f_1\ } Y_2\xleftarrow{\ f_2\ }\cdots,
	\]
	where each $f_i$ is a quasi-torsor under some finite $k$-group scheme $G_i$ and where $H^0(X,\cO_X) = H^0(Y_i,\cO_{Y_i})$ for all $i$.
	Then each $G_i$ is linearly reductive and the sequence stabilizes, i.e., $f_i$ is an isomorphism for all sufficiently large $i$.
\end{thm-intro}

Note that we are not assuming the schemes $Y_i$ in the tower to be normal but
merely $S_2$.  
This is the natural generality\,; see \cref{prop:quasitorsor-correspondence}.
However, it will turn out
that every member of such a tower is itself a connected normal globally
$F$-regular projective variety. \Cref{thm:main} is proved in
\cref{sec:proof-main}. It is the quasi-torsor counterpart of
\cite[Thm.~8.2]{krah-vial}, which shows that every finite torsor $Y\to X$ over a proper splinter
$X$ with $H^0(X,\cO_X)=H^0(Y,\cO_Y)$, that is, every morphism that is a torsor over all of $X$
rather than merely over a big open, is an isomorphism.
Projectivity is essential here. Over $\bar k$ the hypothesis
$H^0(X,\cO_X)=H^0(Y_i,\cO_{Y_i})$ holds automatically for towers over a proper $X$, every
member being integral and proper over $\bar k$, so that all these rings equal $\bar k$\,;
\cref{thm:main} then asserts that every tower of quasi-torsor covers of $X$ stabilizes. This
fails for the (globally $F$-regular) affine line. Indeed, the translation actions on
$\mathbb A^1$ of the subgroup schemes $\ZZ/p\subseteq\mathbb G_a$ and
$\alpha_p\subseteq\mathbb G_a$ are free, with quotient maps $t\mapsto t^p-t$
(Artin--Schreier) and $t\mapsto t^p$ (Frobenius), both from $\mathbb A^1$ to itself.
Iterating the former gives an infinite tower of \'etale $\ZZ/p$-torsor covers, iterating the
latter an infinite tower of infinitesimal $\alpha_p$-torsor covers, with every member
isomorphic to~$\mathbb A^1$. 
For towers of quasi-torsor covers under finite \'etale group schemes over $\bar k$, the stabilization
assertion of \cref{thm:main} can alternatively be deduced by combining \cite[Cor.~8.5 \&~Thm.~D]{krah-vial} with \cite[Main~Thm.]{bhatt_carvajal-rojas_graf_schwede_tucker}. 
Our new contribution
 thus lies with the infinitesimal structure groups.
That infinitesimal quasi-torsor covers genuinely occur is illustrated by the following basic
example\,: for $p$ odd, the quotient of $\PP^2$ by the action of $\mu_p$ given by
$t\cdot[x_0:x_1:x_2]=[x_0:tx_1:t^2x_2]$ is a toric, hence globally $F$-regular
\cite{smith_globally_f_regular_varieties_applications_to_vanishing_theorems_for_quotients_of_fano_varieties},
projective surface whose regular locus carries a nontrivial $\mu_p$-torsor, whose total space is
the complement of the three coordinate points in $\PP^2$.
Let us also point out
that the proof of \cref{thm:main} is effective\,: the degree of $Y_n\to X$ is
bounded above by $1/s(R)$ for every~$n$, where $R$ is the local ring at the
vertex of an affine cone over~$X\times_L\bar k$ with $L\coloneqq H^0(X,\cO_X)$, the base change
being over the constants $L$ rather than over $k$, and where $s(R)$ is its
$F$-signature\,; this is in the spirit of the effective bounds of
\cite{carvajal-rojas_schwede_tucker_fundamental_groups_of_f_regular_singularities_via_f_signature,carvajal_rojas_finite_torsors_over_strongly_f_regular_singularities}.
\medskip

Let $X$ be a normal geometrically integral proper scheme over $k$ with $X^{\reg}(k)\neq\varnothing$.
Its regular locus $X^{\reg}$ is a big open, so by \cref{lem:nori-torsor} it has a profinite
Nori fundamental group scheme $\piN(X^{\reg},x)$, for any $x\in X^{\reg}(k)$, whose finite
quotients are realized by Nori-reduced pointed finite torsors $Y\to X^{\reg}$ with
$H^0(Y,\cO_Y)=H^0(X,\cO_X)=k$.
Our second main result is the
following.

\begin{thm-intro}\label{thm:piN}
	Let $X$ be a connected normal globally $F$-regular projective variety over $k$ and let $x \in X^{\reg}(k)$.
	Then $\piN(X^{\reg},x)$ is a finite linearly reductive $k$-group scheme, and its order satisfies $\ord\piN(X^{\reg},x)\le 1/s(R)$, where $R$ is the local ring at the vertex of the section ring of any ample line bundle on $X\times_k \bar{k}$.
\end{thm-intro}

If $k$ is algebraically closed, the \'etale part of \cref{thm:piN} recovers the
finiteness of $\pet(X^{\reg})$, including the primality of its order to~$p$, of
\cite[Cor.~5.7]{carvajal-rojas_schwede_tucker_fundamental_groups_of_f_regular_singularities_via_f_signature}.
\medskip

 A key step in our proofs, \cref{thm:multcomp}, shows that the structure group
of \emph{any} finite quasi-torsor cover $\pi\colon Y\to X$ of a connected proper splinter $X$ over $k$
with $H^0(X,\cO_X)=H^0(Y,\cO_Y)$ is linearly reductive. The local extension theorem of
\cite[Main~Thm.]{carvajal_rojas_finite_torsors_over_strongly_f_regular_singularities}
recalled above assumes the neutral component of the structure group to be
solvable\,; no such assumption is needed here, as \cref{thm:multcomp} shows the
neutral component to be of multiplicative type. 
In particular, the regular
locus of a connected proper splinter over $\bar k$ carries no wild \'etale cover (\cref{cor:tame-qe}) and no
infinitesimal $\alpha_p$-cover\,; this \emph{tameness} is what distinguishes
\cref{thm:main} and \cref{thm:piN} from their \'etale shadows. For normal globally
$F$-regular projective varieties over $\bar k$, the absence of wild \'etale covers recovers the primality
to~$p$ of \cite[Cor.~5.7]{carvajal-rojas_schwede_tucker_fundamental_groups_of_f_regular_singularities_via_f_signature}.
The unipotent phenomena
responsible for much of the pathology of characteristic-$p$ geometry are in this
way absent from connected proper splinters over~$\bar k$.

\medskip
In mixed characteristic, Cai--Lee--Ma--Schwede--Tucker
\cite{cai-lee-ma-schwede-tucker} proved, using a perfectoid variant of the
$F$-signature, an analogue of the local \'etale finiteness theorem of
\cite{carvajal-rojas_schwede_tucker_fundamental_groups_of_f_regular_singularities_via_f_signature}.
The corresponding global \'etale finiteness in mixed characteristic, for the regular locus
of a normal integral projective scheme that is globally $+$-regular in the sense of
\cite{bhatt_et_al_globally_+_regular_varieties_and_mmp_for_threefolds_in_mixed_char},
was also established (under some additional technical assumptions) in
\cite[Thm.~7.4]{cai-lee-ma-schwede-tucker} by passing to cones from their local \'etale
finiteness theorem. 
It would be interesting to know whether the infinitesimal torsors are likewise controlled in the mixed-characteristic
setting, i.e.\ whether every tower of quasi-torsor covers
under finite locally free group schemes stabilizes.

\begin{orga}
	\cref{sec:preli} collects preliminary material\,: reflexive sheaves, the relative
		dualizing sheaf of a finite morphism, and the lifting \cref{lem:lift} for the
	splinter property and global $F$-regularity.
	\cref{sec:q-torsors} then introduces quasi-torsors and quasi-torsor covers, and
	establishes \cref{prop:quasitorsor-correspondence}\,:  torsion-free $S_2$
	finite quasi-torsors under $G$ over a normal scheme $X$ correspond bijectively, via restriction and reflexive
	extension, to finite $G$-torsors
	over the regular locus $X^{\reg}$.  
	Its crucial input is the purity theorem of Moret-Bailly
	\cite[Lem.~2]{moret-bailly_purete}.
	
	\cref{sec:q-torsors_upper_shriek} develops modular characters and modules of
	integrals for finite locally free group schemes over an arbitrary base. It
	computes the relative dualizing sheaf of the projection $[X/H]\to[X/G]$ of
	quotient stacks in terms of these (\cref{prop:exceptional_inverse}). The
	section is written in greater generality than presently needed. This costs
	little and is aimed at future applications, in particular to the
	mixed-characteristic setting.
	\cref{sec:crepancy-ascent} puts this to geometric use. It deduces the main
	technical \cref{lem:exceptional_inverse} and the ascent result
	\cref{prop:splinter-ascent}, which under certain conditions lifts the splinter property and global $F$-regularity 
	from the base of a quasi-torsor cover to its intermediate quotients.
	
	Combined with the results of \cref{sec:F-split} and \cref{sec:splinters}, the
	ascent result rules out $\alpha_p\subseteq G$ and wild \'etale
	$\ZZ/p$-quotients. Specifically, \cref{lem:nounipotent} shows, by an
	elementary argument with the Frobenius exact sequence, that a connected
	proper normal $F$-split variety over $\bar k$ admits no quasi-torsor cover under a
	nontrivial finite infinitesimal unipotent group scheme.
	\cref{lem:Zp} shows in turn that every big open of a connected proper
	splinter over $\bar k$ admits no nontrivial connected $\ZZ/p$-cover. Its proof
	combines two facts\,: splinters over $k$ are $F$-rational, and the structure
	sheaf of a proper splinter over $k$ has no positive-degree cohomology.
	Via the d\'evissage made possible by the ascent result \cref{prop:splinter-ascent}, the two
	lemmas combine to prove \cref{thm:multcomp}\,: the structure group scheme of
	any quasi-torsor cover $\pi\colon Y\to X$ of a connected proper splinter over $k$ with
	$H^0(X,\cO_X)=H^0(Y,\cO_Y)$ is linearly reductive.
	
	\cref{sec:proof-main} then proves \cref{thm:main}. The proof begins by reducing to 
	$k=\bar k$. By \cref{thm:multcomp} we
	may then refine a given tower into quasi-\'etale steps of degree prime to $p$ and
	$\mu_p$-steps. We then pass to section rings with respect to compatible ample
	invertible sheaves and localize at the vertex. There each step becomes either
	quasi-\'etale or a cyclic cover of Veronese type\,; the global geometry rules
	out Kummer-type $\mu_p$-covers at the vertex (\cref{rmk:Veronese}). At each
	step the $F$-signature is multiplied by the degree.
	 Since the $F$-signature of a strongly $F$-regular local
	ring lies in $(0,1]$, only finitely many steps can be nontrivial, and the
	tower stabilizes. Finally,  \cref{thm:piN} is established in \cref{sec:Nori}.
\end{orga}

\begin{conventions}
	Throughout, $k$ is a field of characteristic $p>0$ with algebraic closure $\bar k$, and a
	\emph{variety} is an integral separated scheme of finite type over $k$. 
	An open subset $V \subseteq W$ of a scheme is called a \emph{big open} if its complement has codimension at least 2.
	For a finite $k$-group scheme $G$ we write $\ord G\coloneqq\dim_k\OO(G)$ for its order.
\end{conventions}

\begin{ack}
	Thanks to Manuel Hoff and Johannes Krah for useful comments.
\end{ack}

\section{Preliminaries} \label{sec:preli}

\subsection{Reflexive sheaves} \label{sec:reflexive_sheaves}
A coherent sheaf $\cF$ on an integral locally Noetherian scheme is said to be \emph{reflexive} if the natural map
$\cF\to\cF^{\vee\vee}$ to its double dual is an isomorphism. 
For an
integral normal locally Noetherian scheme $W$, every sheaf of the form $\cHom_{\cO_W}(\cF,\cO_W)$ with $\cF$ coherent is reflexive, and a
coherent sheaf on $W$ is reflexive if and only if it is torsion-free and
$S_2$.
 If 
$j\colon V\hookrightarrow W$ is an open immersion of a big open subset,
a coherent reflexive sheaf $\mathcal G$ on $W$ satisfies
$\mathcal G\xrightarrow{\ \sim\ }j_*j^*\mathcal G$ and conversely, for
$\mathcal H$ coherent reflexive on $V$, the pushforward $j_*\mathcal H$ is
coherent reflexive, so that $j_*$ and $j^*$ are mutually inverse equivalences
between coherent reflexive sheaves on $V$ and on $W$. 
We refer to \cite[\href{https://stacks.math.columbia.edu/tag/0AVT}{Tag~0AVT}]{stacks-project}.
For a finite surjective morphism $\pi\colon Y\to X$ of integral normal locally Noetherian schemes,
a coherent $\cO_Y$-module $\mathcal F$ is reflexive if and only if its pushforward $\pi_*\mathcal F$ is\,; see~\cite[\S 2.3]{schwede_tucker_on_the_behaviour_of_test_ideals_under_finite_morphisms}.

\subsection{Relative dualizing sheaf for finite morphisms} \label{sec:upper_shriek}

Let $\pi\colon Y \to X$ be a finite morphism of schemes such that $\pi_*\cO_Y$
is finitely presented as an $\cO_X$-module\,; this holds if $X$ is locally
Noetherian, and also if $\pi$ is finite locally free.
Being affine, $\pi$ has pushforward $\pi_*$, which identifies $\Qcoh(Y)$
with the category of quasi-coherent $\pi_*\cO_Y$-modules on~$X$\,; via this
identification we regard such modules on $Y=\Spec_X\pi_*\cO_Y$. The functor
\[
\pi^{!}\colon\Qcoh(X)\to\Qcoh(Y),\qquad
\pi^{!}\cF\coloneqq\cHom_{\cO_X}(\pi_*\cO_Y,\cF)\ \text{regarded on }Y,
\]
is right adjoint to $\pi_*$\,; finite presentation of $\pi_*\cO_Y$ ensures that
this $\cHom$ is quasi-coherent.
The \emph{relative dualizing sheaf} of
$\pi$ is the $\cO_Y$-module
\[
\omega_{\pi}=\omega_{Y/X}\coloneqq\pi^{!}\cO_X=\cHom_{\cO_X}(\pi_*\cO_Y,\cO_X).
\]
Its formation commutes with flat base change on~$X$, meaning that the natural
comparison map is an isomorphism\,; in particular
$\omega_{Y/X}|_{\pi^{-1}(U)}\isom\omega_{\pi^{-1}(U)/U}$ for every open
$U\subseteq X$. If $X$ is locally Noetherian then $\omega_{\pi}$ is moreover coherent. (For $X$ locally Noetherian, this $\omega_{\pi}$ is $\mathcal{H}^{0}$ of Grothendieck's
exceptional inverse image of $\mcO_X$ along $\pi$ 
\cite[{}III.6]{hartshorne_residues_and_duality}.)
Finite duality is functorial\,: if $\pi'\colon Z\to Y$ is a further finite
morphism with $\pi'_*\cO_Z$ finitely presented, then right adjoints of the
composed pushforwards compose, giving a canonical isomorphism
$(\pi\circ\pi')^{!}\isom\pi'^{!}\circ\pi^{!}$. Being obtained from the adjunctions, this
isomorphism is compatible with flat base change on~$X$, and with arbitrary base change when
$\pi$ and $\pi'$ are finite locally free.

If now $\pi\colon Y \to X$ is finite locally free, then $\pi_*\cO_Y$ is
locally free of finite rank. The following consequences will be used repeatedly.
First, the formation of $\omega_{\pi}$ commutes with arbitrary base change on~$X$, and not
merely with flat base change\,: forming $\cHom_{\cO_X}(\pi_*\cO_Y,\cO_X)$ does, its argument being
locally free of finite rank. Second, for every quasi-coherent $\mcO_X$-module $\cF$ the natural map
$\pi^{*}\cF\otimes_{\cO_Y}\omega_{\pi}\to\pi^{!}\cF$ is an isomorphism, again compatibly with
arbitrary base change. Third, the canonical isomorphism
$(\pi\circ\pi')^{!}\isom\pi'^{!}\circ\pi^{!}$ is compatible with arbitrary base change when
$\pi$ and $\pi'$ are finite locally free.

The same holds verbatim for a finite morphism
$q\colon\mcY\to\mcX$ of algebraic stacks with $q_*\cO_{\mcY}$ finitely
presented\,: affineness of $q$ gives the right adjoint
$q^{!}\cF\coloneqq\cHom_{\cO_{\mcX}}(q_*\cO_{\mcY},\cF)$, regarded on~$\mcY$, of
$q_*\colon\Qcoh(\mcY)\to\Qcoh(\mcX)$, with $\omega_{q}\coloneqq q^{!}\cO_{\mcX}$\,; for $q$
finite locally free, the natural map $q^{*}\cF\otimes_{\cO_{\mcY}}\omega_{q}\to q^{!}\cF$ is an
isomorphism and the formation of $\omega_{q}$ commutes with arbitrary base change\,; and
$(q\circ q')^{!}\isom q'^{!}\circ q^{!}$ for a further finite morphism $q'\colon\mcZ\to\mcY$ with
$q'_*\cO_{\mcZ}$ finitely presented. Pulling back along a smooth atlas of $\mcX$ recovers the
scheme case.

\subsection{Ascending global $F$-regularity and the splinter property}\label{sec:ascent}

A Noetherian scheme~$X$ is called a \emph{splinter} if for all finite surjective morphisms $\pi \colon Y \to X$, the canonical map $\mcO_X \to \pi_*\mcO_Y$ splits as $\mcO_X$-modules. By Zariski's Main Theorem, the splinter property passes to dense open subschemes\,: if $X$ is a splinter
and $U\subseteq X$ is a dense open, then $U$ is a splinter\,; see e.g., \cite[Lem.~4.1(i)]{krah-vial}.
Splinters have the following local properties. Any splinter is normal, and splinters of finite type over a field of positive
characteristic are moreover Cohen--Macaulay and $F$-rational\,; see e.g., \cite[Prop.~3.5]{krah-vial} and the references therein.
 A normal
globally $F$-regular variety over $k$ is a splinter
\cite[Lem.~6.14]{bhatt_et_al_globally_+_regular_varieties_and_mmp_for_threefolds_in_mixed_char}, and its
local rings are strongly $F$-regular. 
Recall that a scheme $X$ of characteristic $p>0$ is \emph{$F$-finite} if its absolute
Frobenius $F\colon X\to X$ is finite, and \emph{$F$-split} if $\OO_X\to F_\ast\OO_X$
admits an $\OO_X$-linear splitting\,; a scheme of finite type over $k$ is $F$-finite if and only if $k$ is $F$-finite. 
Both $F$-finite splinters and normal globally
$F$-regular varieties over $k$
are then $F$-split\,: $F$ is finite surjective, so the splinter property splits $\OO_X\to F_\ast\OO_X$.
The following lifting lemma is taken from \cite{krah-vial}.

\begin{lemma}[{\cite[Lem.~5.3(b) \& Prop.~6.4(b)]{krah-vial}}] \label{lem:lift}
	Let $\pi \colon Y \to X$ be a finite surjective morphism of Noetherian schemes. 
	Assume that $\omega_{Y/X} \isom \mcO_Y$ in $\Coh(Y)$, and either $H^0(Y,\mcO_Y)$ is a field or $H^0(X,\mcO_X) = H^0(Y,\mcO_Y)$.
	\begin{enumerate}
		\item If $X$ is a splinter, then $Y$ is a splinter.
		\item If $X$ is a normal globally $F$-regular variety over $k$, then $Y$ is normal globally $F$-regular.
	\end{enumerate}
\end{lemma}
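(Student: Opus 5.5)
The key input is the trace map. Since $\omega_{Y/X}=\cHom_{\cO_X}(\pi_*\cO_Y,\cO_X)$, the hypothesis $\omega_{Y/X}\isom\cO_Y$ gives a generator $\tau\in\Hom_{\cO_X}(\pi_*\cO_Y,\cO_X)$. Here $\tau$ is the image of $1$, and every $\cO_X$-linear map $\pi_*\cO_Y\to\cO_X$ has the form $\tau(b\cdot -)$ for a unique global section $b\in H^0(Y,\cO_Y)$.

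The plan for (a) is as follows. Let $g\colon Z\to Y$ be finite surjective; we must split $\cO_Y\to g_*\cO_Z$. Since $\pi\circ g$ is finite surjective and $X$ is a splinter, there is a splitting $\sigma\colon(\pi g)_*\cO_Z\to\cO_X$ of $\cO_X\to(\pi g)_*\cO_Z$. By adjunction ($g^!$ right adjoint to $g_*$, and $(\pi g)^!=g^!\pi^!$), the map $\sigma$ corresponds to an $\cO_Y$-linear map $\tilde\sigma\colon g_*\cO_Z\to\pi^!\cO_X=\omega_{Y/X}\isom\cO_Y$. Unwinding, $\tau\circ\pi_*(\tilde\sigma)=\sigma$.

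The composite $\cO_Y\to g_*\cO_Z\xrightarrow{\tilde\sigma}\cO_Y$ is multiplication by some $c\in H^0(Y,\cO_Y)$. Precomposing with $\cO_X\to\pi_*\cO_Y$ and applying $\tau$ gives $\tau(c\cdot 1)=\sigma(1)=1$. It remains to show that $c$ is a unit.
\begin{itemize}
\item If $H^0(Y,\cO_Y)$ is a field, then $c\neq0$ because $\tau(c)=1$, so $c$ is invertible.
\item If $H^0(X,\cO_X)=H^0(Y,\cO_Y)$, then $c$ comes from $X$. By linearity $\tau(c)=c\,\tau(1)$, and $\tau(1)\in H^0(X,\cO_X)$, so $c\,\tau(1)=1$ and again $c$ is a unit.
\end{itemize}
In both cases $c^{-1}\tilde\sigma$ is the desired splitting, so $Y$ is a splinter.

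For (b) I would run the same argument with the maps $\cO\to F^e_*\cO(D)$. First, $Y$ is a splinter by (a), since globally $F$-regular varieties are splinters; in particular $Y$ is normal, and it is $F$-finite. Given an effective divisor $E$ on $Y$, choose an effective divisor $D$ on $X$ with $\pi^*D\ge E$, for instance by taking $D$ to be the divisor of a norm. Global $F$-regularity of $X$ gives $e$ and a splitting of $\cO_X\to F^e_*\cO_X(D)\to F^e_*\pi_*\cO_Y(\pi^*D)$, after composing with the inclusion; more precisely we split the composite $\cO_X\to\pi_*F^e_*\cO_Y(\pi^*D)$. Since $F^e$ commutes with $\pi$, the same adjunction trick with $g$ replaced by $F^e$ composed with the twist, using $\pi^!$, yields an $\cO_Y$-map $F^e_*\cO_Y(\pi^*D)\to\cO_Y$ whose composite with $\cO_Y\to F^e_*\cO_Y(\pi^*D)$ is multiplication by a unit $c$. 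Restricting to $F^e_*\cO_Y(E)\subseteq F^e_*\cO_Y(\pi^*D)$ then splits $\cO_Y\to F^e_*\cO_Y(E)$.

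The main obstacle is the bookkeeping in the adjunction when $Y$ is not normal. One must check that the unit/counit identity really gives $\tau\circ\pi_*\tilde\sigma=\sigma$. One must also handle the twisted sheaf $\cO_Y(\pi^*D)$: I would use $\pi^*$ of an invertible sheaf after shrinking to a big open, or work with the reflexive hull, using the normality of $Y$ supplied by (a). Finally, one must identify the composite with multiplication by a global function.
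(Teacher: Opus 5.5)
The paper gives no proof of this lemma; it imports it from \cite{krah-vial}, so your proposal is judged here on its own. Your part (a) is correct and complete. The identification $\Hom_{\cO_X}(\pi_*\cO_Y,\cO_X)=\{\tau(b\cdot-)\}$, the adjunction identity $\tau\circ\pi_*\tilde\sigma=\sigma$, and the two unit arguments all check out. Only the adjunction for $\pi^!$ is actually used; $(\pi g)^!=g^!\pi^!$ plays no role.

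The gap is in (b), in the sentence claiming that global $F$-regularity of $X$ splits the composite $\cO_X\to F^e_*\cO_X(D)\to F^e_*\pi_*\cO_Y(\pi^*D)$ \emph{after composing with the inclusion}. Global $F$-regularity only gives $\psi\colon F^e_*\cO_X(D)\to\cO_X$ with $\psi(1)=1$, a map defined on the \emph{smaller} sheaf. Composing with $F^e_*\cO_X(D)\hookrightarrow F^e_*\pi_*\cO_Y(\pi^*D)$ goes the wrong way and produces no map out of the larger sheaf. Yet a splitting of $\cO_X\to\pi_*F^e_*\cO_Y(\pi^*D)$ is exactly the input your adjunction step needs.

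What is missing is an extension of $\psi$, that is, an $\cO_X$-linear retraction $\rho_D\colon\pi_*\cO_Y(\pi^*D)\to\cO_X(D)$ of the natural inclusion. It is easy to supply.
\begin{enumerate}
\item Since $X$ is a splinter (\S\ref{sec:ascent}), $\cO_X\to\pi_*\cO_Y$ has a retraction $\rho$.
\item On the big open $U\subseteq X$ where $D$ is Cartier, the projection formula gives $\pi_*\cO_Y(\pi^*D)|_U=\pi_*\cO_Y|_U\otimes\cO_U(D)$, and $\rho\otimes\id$ is a retraction there.
\item This extends uniquely to all of $X$, because $\cO_X(D)$ and $\pi_*\cO_Y(\pi^*D)$ are reflexive on $X$; the latter uses that $Y$ is normal by (a), together with \S\ref{sec:reflexive_sheaves}. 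The retraction identity also extends, $\cO_X(D)$ being torsion-free.
\end{enumerate}
Then $\psi\circ F^e_*\rho_D$ splits $\cO_X\to F^e_*\pi_*\cO_Y(\pi^*D)$. The rest of your argument goes through verbatim: the adjunction to $\tilde\sigma\colon F^e_*\cO_Y(\pi^*D)\to\omega_{Y/X}\isom\cO_Y$, the element $c$ with $\tau(c)=1$ being a unit, and restriction to $F^e_*\cO_Y(E)$.

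You should also record that $Y$ is integral, hence a variety, so that Weil divisors and $\pi^*D$ make sense. It is normal by (a), and connected because $H^0(Y,\cO_Y)$ is either a field or equal to the domain $H^0(X,\cO_X)$, so it has no nontrivial idempotents.
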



This lemma is used crucially in \cite{krah-vial}, together with the triviality of the relative dualizing sheaf of a finite torsor under a finite locally free group scheme over a ring $R$ with $\Pic(\Spec R)=0$, itself obtained from the Frobenius structure of the associated Hopf algebra~\cite[Lem.~7.1]{krah-vial}, the multiplicativity of the Euler characteristic along finite torsors \cite[Lem.~8.1]{krah-vial} and the vanishing of the positive-degree cohomology of the structure sheaf of a proper splinter~\cite{bhatt_derived_splinters_in_positive_characteristic} and \cite[Prop.~3.6]{krah-vial}, to show that the Nori fundamental group scheme of a connected proper splinter over $k$, pointed at a $k$-rational point, is trivial\,; see \cite[Thm.~8.3]{krah-vial}.

\section{Quasi-torsors and quasi-torsor covers} \label{sec:q-torsors}

First we spell out what we mean by \emph{quasi-torsor} and \emph{quasi-torsor cover}.

\begin{definition}\label{def:quasitorsor}
	Let $S$ be a scheme and	let $G$ be a finite locally free $S$-group scheme.
	\begin{enumerate}
		\item 	A morphism $\pi\colon Y\to X$ of locally Noetherian
		$S$-schemes is a \emph{quasi-torsor} under $G$ if there is a big open subset
		$U\subseteq X$ such that
		$\pi^{-1}(U)\to U$ is a torsor under $G$.
		\item A morphism $\pi\colon Y\to X$ of locally Noetherian schemes is a \emph{cover} if 
		$\pi$ is a finite surjective morphism with $X$ integral normal and $Y$ integral $S_2$. 
		\item A morphism $\pi\colon Y\to X$ of locally Noetherian
		$S$-schemes is a \emph{quasi-torsor cover} under $G$ if 
		$\pi$ is a finite surjective quasi-torsor under $G$, with $X$ integral normal and $Y$ integral $S_2$. 
	\end{enumerate}
\end{definition}

Here and throughout, a \emph{torsor} under a finite locally free group scheme
$G$ means an fppf torsor, the group acting on the left\,; any such is representable by a scheme, finite locally free over the base. 
We regard the torsor structure over
the big open (equivalently, in the torsion-free $S_2$ finite case, by \cref{rmk:action-extends}, the induced $G$-action
on $Y$) as part of the data of a quasi-torsor.

We will also say that a morphism $\pi\colon Y\to X$ of locally Noetherian
$S$-schemes is \emph{quasi-\'etale} if there is a big open subset
$U\subseteq X$ such that
$\pi^{-1}(U)\to U$ is \'etale, and that it is a \emph{quasi-\'etale cover}
 if in addition it is a cover in the sense of \cref{def:quasitorsor}(ii)\,; the total space $Y$ of a
 quasi-\'etale cover is then automatically normal. Indeed every codimension-one point $y$ of $Y$ has
 $\dim\OO_{X,\pi(y)}=1$ by going-down (see \cref{rmk:action-extends}), hence lies over $U$, so
 $\OO_{Y,y}$ is \'etale over a discrete valuation ring and therefore regular\,; thus $Y$ is $R_1$,
 and being $S_2$ it is normal by Serre's criterion.
 
 \begin{remark}\label{rmk:nonnormal-torsor}
 	In contrast with the quasi-\'etale case, the total space of a torsor cover under
 	an infinitesimal group scheme over a regular base need not be normal.
 	Assume $p$ odd and let $U\subseteq\mathbb A^1=\Spec k[x]$ be the complement
 	of the vanishing locus of $1+x^2$.  Let
 	$V=\Spec\,\OO_U[t]/(t^p-1-x^2)$
 	be the $\mu_p$-torsor over $U$ associated with the unit $1+x^2$ by Kummer
 	theory. Setting $s=t-1$ gives $s^p=x^2$, irreducible as $p$ is odd, so $V$ is an integral hypersurface
 	over the smooth curve $U$, in particular Cohen--Macaulay, hence $S_2$\,; but
 	$V$ has a cusp at $(x,s)=(0,0)$ and so is not normal.
 	Thus $V\to U$ is a torsor cover with non-normal total space.
 \end{remark}

The extension of module structures across big opens will be used repeatedly, in the following form.

\begin{lemma}\label{lem:reflexive-coaction}
	Let $W$ be an integral normal locally Noetherian scheme, let $j\colon V\hookrightarrow W$ be the
	inclusion of a big open, and let $\mathcal B$ be a finite locally free $\cO_W$-algebra. For a
	coherent reflexive $\cO_W$-module $\mathcal F$, the sheaf
	$\mathcal F\otimes_{\cO_W}\mathcal B$ is again reflexive, and $j_*$ commutes with
	$-\otimes\mathcal B$ on coherent reflexive sheaves. 
	
	In particular, if $\mathcal B$ is the
	coordinate algebra of $G\times_S W$ for a finite locally free group scheme~$G$ over a scheme $S$
	and an arbitrary morphism $W\to S$, then $j_*$ and $j^*$ are mutually inverse equivalences
	between coherent reflexive sheaves with $G$-coaction on $V$ and on $W$, compatibly with the
	comodule and multiplicativity identities and with invariants.
\end{lemma}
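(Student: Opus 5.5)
The plan is to work locally on $W$ and reduce everything to a statement about modules over a Noetherian normal domain. Since $\mathcal B$ is finite locally free, we may shrink $W$ to an affine open on which $\mathcal B\isom\cO_W^{\oplus n}$ as an $\cO_W$-module. The constructions in question, reflexivity, $j_*$, and the tensor product, are compatible with restriction to opens, and $V\cap W'$ is still big in any open $W'\subseteq W$.

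\emph{First assertion.} Forgetting the algebra structure, $\mathcal F\otimes\mathcal B\isom\mathcal F^{\oplus n}$ locally, and a finite direct sum of reflexive sheaves is reflexive. For the commutation, the natural map is
\[
(j_*\mathcal H)\otimes\mathcal B\to j_*(\mathcal H\otimes j^*\mathcal B),
\]
with $\mathcal H$ coherent reflexive on $V$. It is defined globally by adjunction; equivalently, it is the projection formula map. Locally it becomes $(j_*\mathcal H)^{\oplus n}\to j_*(\mathcal H^{\oplus n})$, which is an isomorphism because $j_*$ is additive. The same local argument shows that the natural map is an isomorphism for iterated tensor products $\mathcal F\otimes\mathcal B\otimes\mathcal B$ as well. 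The step needing some care is that the identification must be independent of the chosen trivialization. This holds because the map itself is canonical, namely the projection formula morphism for the locally free sheaf $\mathcal B$, and only the verification that it is an isomorphism is local.

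\emph{Second assertion.} Let $\mathcal B=\cO(G\times_S W)$. A $G$-coaction on a coherent reflexive $\mathcal G$ on $W$ is a map $\rho\colon\mathcal G\to\mathcal G\otimes\mathcal B$ satisfying the coassociativity and counit identities. These identities are equalities of maps into $\mathcal G\otimes\mathcal B\otimes\mathcal B$ and $\mathcal G$, which are reflexive by the first part. We proceed as follows.
\begin{enumerate}
\item Given a coaction $\rho_V$ on $\mathcal H$ over $V$, apply $j_*$ and compose with the inverse of the isomorphism above to obtain $j_*\rho_V\colon j_*\mathcal H\to j_*\mathcal H\otimes\mathcal B$.
\item The comodule identities hold after restriction to $V$. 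Morphisms between reflexive sheaves that agree on a big open agree everywhere, since $\Hom(\mathcal G,\mathcal G')=\Hom(j^*\mathcal G,j^*\mathcal G')$ for $\mathcal G'$ reflexive. Hence the identities hold on $W$.
\item The same argument applies to the multiplicativity identities when $\mathcal H$ is an algebra. These involve $\mathcal H\otimes\mathcal H\to\mathcal H$; here the target is reflexive and $j_*$ of a multiplication is obtained via the map $j_*\mathcal H\otimes j_*\mathcal H\to j_*(\mathcal H\otimes\mathcal H)\to j_*\mathcal H$, and equalities are again checked on $V$ because the target is reflexive.
\item Conversely, restriction $j^*$ obviously preserves coactions.
\item The two functors are mutually inverse because they are so on underlying reflexive sheaves (\cref{sec:reflexive_sheaves}) and coactions correspond via the fully faithful restriction of morphisms.
\end{enumerate}

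\emph{Invariants.} The invariants are the equalizer of $\rho$ and $\mathrm{id}\otimes1\colon\mathcal G\to\mathcal G\otimes\mathcal B$. Since $j_*$ is a right adjoint, it preserves kernels. Hence
\[
j_*(\mathcal H^G)=\ker\bigl(j_*\mathcal H\rightrightarrows j_*(\mathcal H\otimes\mathcal B)\bigr)=(j_*\mathcal H)^G,
\]
using the identification from the first part. In the other direction, $j^*$ is exact.

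The main obstacle is only bookkeeping: checking that the identifications $j_*(\mathcal H\otimes\mathcal B)\isom j_*\mathcal H\otimes\mathcal B$ are canonical and compatible with the comultiplication of $\mathcal B$. I would handle this by always using the canonical projection-formula map and invoking the local trivialization solely to prove that it is an isomorphism.
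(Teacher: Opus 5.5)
Your proposal is correct and follows essentially the same route as the paper: reflexivity via a local trivialization of $\mathcal B$, extension of the coaction by checking its comodule and multiplicativity identities as equalities of maps into reflexive targets over the big open $V$, and invariants via left exactness of $j_*$. The only cosmetic differences are two. First, you identify $j_*(\mathcal H\otimes j^*\mathcal B)\isom j_*\mathcal H\otimes\mathcal B$ through the canonical projection-formula map, which needs only local freeness of $\mathcal B$, whereas the paper notes that both sides are reflexive and agree on $V$. Second, you get uniqueness of morphisms from the adjunction $\Hom(\mathcal G,j_*j^*\mathcal G')=\Hom(j^*\mathcal G,j^*\mathcal G')$ rather than from the torsion-freeness of the target.
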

\begin{proof}
	Reflexivity is local and $\mathcal B$ is locally free of finite rank, so locally
	$\mathcal F\otimes\mathcal B$ is a finite direct sum of copies of $\mathcal F$, hence reflexive\,;
	the same argument on $V$ makes $j^*\mathcal F\otimes j^*\mathcal B$ reflexive, so
	$j_*(j^*\mathcal F\otimes j^*\mathcal B)$ is coherent reflexive (\S\ref{sec:reflexive_sheaves}).
	These two reflexive sheaves restrict to the same sheaf on $V$, hence coincide.
	We use repeatedly that two morphisms into a torsion-free sheaf agreeing on $V$ are equal\,: their
	difference has image supported on $W\setminus V$, hence torsion, hence zero, $W$ being integral
	and $V$ dense. A coaction on a coherent reflexive sheaf on $V$ therefore extends uniquely to one
	on its reflexive extension, its comodule and multiplicativity identities holding because they
	hold on~$V$\,; and $j_*f$ is equivariant whenever $f$ is, for the same reason. With $j_*$ and
	$j^*$ mutually inverse on coherent reflexive sheaves (\S\ref{sec:reflexive_sheaves}), this gives
	the asserted equivalence. Invariants, being the kernel of the difference of
	the coaction and the trivial coaction $(-)\otimes1$, commute with the left exact $j_*$.
\end{proof}

The following correspondence is well known to experts
 and provides a dictionary between finite quasi-torsors and the torsors they restrict~to over the regular locus. Its one non-formal ingredient is the purity theorem of \cite[Lem.~2]{moret-bailly_purete}, extending Zariski--Nagata purity \cite[X, \S3]{SGA2} from the \'etale case to finite torsors under a finite locally free group scheme\,; see also \cite[Ch.~II, Prop.~7]{nori_the_fundamental_group_scheme}.

\begin{proposition}
	\label{prop:quasitorsor-correspondence}
	Let $S$ be a scheme, $G$ a finite locally free $S$-group scheme, and $X$ an integral
	normal locally Noetherian $S$-scheme whose regular locus
	$j\colon X^{\reg}\hookrightarrow X$ is open (e.g.\ $X$ excellent).
	Restriction and reflexive extension,
	\[
	R\colon Y\longmapsto Y|_{X^{\reg}}
	\qquad\text{and}\qquad
	N\colon (\pi_V\colon V \to X^{\reg}) \longmapsto \Spec_X\!\bigl(j_\ast\pi_{V\ast}\OO_V\bigr),
	\]
	are mutually quasi-inverse $G$-equivariant equivalences
	\[
	\{\,\text{torsion-free }S_2\text{ finite quasi-torsors under }G\text{ over }X\,\}
	\;\simeq\;
	\{\,\text{finite torsors under }G\text{ over }X^{\reg}\,\}.
	\]
	Under this equivalence, $Y$ is integral if and only if $V$ is\,; in particular, quasi-torsor
	covers under $G$ over $X$ correspond to finite torsors under $G$ over $X^{\reg}$ with integral
	total space.
\end{proposition}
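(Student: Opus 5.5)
We prove the two composites are naturally isomorphic to the identities, using \cref{lem:reflexive-coaction} for the algebra structure and the coaction, and Moret-Bailly's purity theorem \cite[Lem.~2]{moret-bailly_purete} for the one non-formal point: $N(V)$ is a torsor over some big open of $X$.

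\textbf{Step 1: $N$ lands in quasi-torsors.} Let $\pi_V\colon V\to X^{\reg}$ be a finite $G$-torsor. Then $\mathcal A_V\coloneqq\pi_{V*}\OO_V$ is locally free of finite rank on $X^{\reg}$, hence reflexive. Since $X$ is normal, $X^{\reg}$ is a big open, so $\mathcal A\coloneqq j_*\mathcal A_V$ is coherent and reflexive (\S\ref{sec:reflexive_sheaves}). The multiplication $\mathcal A_V\otimes\mathcal A_V\to\mathcal A_V$ and the unit extend to $\mathcal A$: apply $j_*$ and precompose with $\mathcal A\otimes\mathcal A\to j_*(\mathcal A_V\otimes\mathcal A_V)$. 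The algebra identities hold because they hold on $X^{\reg}$ and $\mathcal A$ is torsion-free. The coaction $\mathcal A_V\to\mathcal A_V\otimes\OO(G)$ extends to a comodule-algebra structure on $\mathcal A$ by \cref{lem:reflexive-coaction}, so $G$ acts on $Y\coloneqq\Spec_X\mathcal A$. The scheme $Y$ is finite over $X$, and it is torsion-free and $S_2$ because $\mathcal A$ is reflexive over the normal scheme $X$.

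It remains to find a big open of $X$ over which $Y$ is a torsor. The torsor condition means that $\mathcal A$ is locally free and that the map $\mathcal A\otimes\mathcal A\to\mathcal A\otimes\OO(G)$, $a\otimes b\mapsto (a\otimes 1)\cdot\mathrm{coact}(b)$, is an isomorphism. The locally free locus of $\mathcal A$ is open. On it, the locus where this map of locally free sheaves is an isomorphism is also open. Both loci contain $X^{\reg}$, so their intersection $U$ contains $X^{\reg}$ and is therefore big.

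\textbf{Step 2: the composites.} Clearly $R N(V)=V$, since $j^*j_*\mathcal A_V=\mathcal A_V$. Conversely, let $Y$ be a torsion-free $S_2$ finite quasi-torsor, torsor over a big open $U$. Then $\pi_*\OO_Y$ is coherent, torsion-free and $S_2$, hence reflexive, so $\pi_*\OO_Y\isom j_*j^*\pi_*\OO_Y$.

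What is not formal is that $R(Y)$ is a torsor over all of $X^{\reg}$, and not merely over $U\cap X^{\reg}$. Here we invoke purity. On the regular scheme $X^{\reg}$, a finite $G$-torsor over the big open $U\cap X^{\reg}$ extends to a $G$-torsor $T$ over $X^{\reg}$ \cite[Lem.~2]{moret-bailly_purete}. The pushforward $\pi_{T*}\OO_T$ is locally free, hence reflexive, so it equals the reflexive extension of the algebra over $U\cap X^{\reg}$. The same holds for $j^*\pi_*\OO_Y$, which is reflexive. By uniqueness of reflexive extension, together with uniqueness of the algebra and coaction structures from \cref{lem:reflexive-coaction}, we get $R(Y)\isom T$ equivariantly. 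Hence $R(Y)$ is a torsor, and $NR(Y)\isom Y$.

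Functoriality and $G$-equivariance on morphisms follow in the same way: a morphism on $X^{\reg}$ extends uniquely under $j_*$, and a morphism on $X$ is determined by its restriction because the sheaves are torsion-free.

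\textbf{Step 3: integrality.} Since $\pi_*\OO_Y$ is torsion-free over the integral scheme $X$, $Y$ has no embedded components and all its associated points lie over the generic point of $X$, which lies in $X^{\reg}$. So $Y$ is reduced if and only if it is reduced at these generic points, which happens if and only if $V$ is reduced there. The irreducible components of $Y$ are the closures of these generic points, and they correspond bijectively to those of $V$. Finally, $\Gamma$ of $\mathcal A$ and of $\mathcal A_V$ agree over every open, so $Y$ is integral if and only if $V$ is.

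The main obstacle is Step 2: a quasi-torsor is assumed to be a torsor only over some big open $U$, and filling in the whole regular locus requires the purity input. Everything else is reflexive-sheaf formalism.
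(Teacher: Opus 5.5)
Your proof is correct and follows the paper's route. The algebra and coaction are extended reflexively via \cref{lem:reflexive-coaction}, and Moret-Bailly's purity is used exactly where the paper uses it, namely to show that $R(Y)$ is a torsor over all of $X^{\reg}$; your opening sentence misplaces it at $N$, where nothing non-formal is needed. The openness argument in your Step~1 is superfluous, since $Y|_{X^{\reg}}=V$ already exhibits $X^{\reg}$ as the required big open. Your associated-point argument for integrality is an equivalent variant of the paper's embedding of $j_*\pi_{V*}\OO_V$ into the constant sheaf $k(V)$.
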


\begin{remark}\label{rmk:action-extends} 
In the proposition, \emph{torsion-free} means that $\pi_{Y\ast}\OO_Y$ is torsion-free over
$\OO_X$, equivalently that every associated point of $Y$ lies over the generic
point of $X$ (automatic when $Y$ is integral)\,; and \emph{$S_2$} means that
$\pi_{Y\ast}\OO_Y$ satisfies Serre's condition $S_2$ as a coherent
$\OO_X$-module
\cite[\href{https://stacks.math.columbia.edu/tag/0340}{Tag 0340}]{stacks-project}.
Together, the two conditions say exactly that $\pi_{Y\ast}\OO_Y$ is reflexive.
Since $\pi$ is finite and $X$ is normal, the $S_2$ condition is moreover
equivalent to $Y$ being an $S_2$ scheme in the sense of
\cite[\href{https://stacks.math.columbia.edu/tag/033P}{Tag 033P}]{stacks-project}\,:
indeed, the depth of $\pi_{Y\ast}\OO_Y$ as an $\OO_X$-module at a point $x \in X$ equals the minimum of the depths of its localizations at the points of $Y$ above $x$
\cite[\href{https://stacks.math.columbia.edu/tag/0AUK}{Tag 0AUK}]{stacks-project},
and $\dim\OO_{Y,y}=\dim\OO_{X,\pi(y)}$ by going-down
	 over the normal base
$X$ (using that $Y$ is torsion-free, so every irreducible component of $Y$
through $y$ dominates~$X$ and $\OO_{X,\pi(y)}$ embeds into a domain finite over
it), so the two Serre conditions read off the same inequalities. In particular,
a quasi-torsor cover in the sense of \cref{def:quasitorsor} is a torsion-free
$S_2$ finite quasi-torsor with integral total space, so the proposition applies
to it. Note also that the $G$-action on a torsion-free $S_2$ finite
quasi-torsor, which by definition is given over a big open $U$ of $X$ where it
is a torsor, extends uniquely to all of $Y$\,: the coaction, defined on
$(\pi_{Y\ast}\OO_Y)|_U$, extends uniquely to the reflexive
$\pi_{Y\ast}\OO_Y$ by \cref{lem:reflexive-coaction}\,; both
sides of the correspondence are thus genuine categories of $G$-equivariant
objects.
\end{remark}

\begin{proof}[Proof of \cref{prop:quasitorsor-correspondence}]
	First, we note that for $G$ infinitesimal the total space of a quasi-torsor is
	typically not the normalization of $X$ in a field extension\,: for instance the
	trivial $\alpha_p$-torsor already has nonreduced total space, with no function
	field at all. This explains the use of the reflexive pushforward
	$j_\ast\pi_{V\ast}\OO_V$.
	
	Since $X$ is normal, $X\setminus X^{\reg}$ has codimension $\ge2$. 
	As the coordinate algebra
	$\mathcal B_G$ of $G_X\coloneqq G\times_S X$, that is, the pullback of $\pi_{G*}\OO_G$ along $X\to S$,
	is a finite locally free $\OO_X$-algebra, \cref{lem:reflexive-coaction} shows that $j_\ast$ and $j^\ast$
	are mutually inverse equivalences between coherent reflexive sheaves with $G$-coaction on
	$X^{\reg}$ and on $X$, compatibly with invariants.
	
We show that the restriction functor $R$ lands in torsors. 
A quasi-torsor is a torsor over a big open
$U\subseteq X$, so $Y|_{X^{\reg}}$ restricts to a torsor over the big open
$U\cap X^{\reg}$ of the \emph{regular} scheme~$X^{\reg}$. By purity
\cite[Lem.~2]{moret-bailly_purete} this torsor extends to a torsor
$\widetilde V\to X^{\reg}$. Being a $G$-torsor, $\widetilde V$ is finite locally free
over $X^{\reg}$, so $\pi_{\widetilde V\ast}\OO_{\widetilde V}$ is a locally free,
in particular reflexive, $\OO_{X^{\reg}}$-algebra\,; and
$\pi_{(Y|_{X^{\reg}})\ast}\OO_{Y|_{X^{\reg}}}=j^\ast(\pi_{Y\ast}\OO_Y)$ is reflexive, being the
restriction to $X^{\reg}$ of the reflexive $\OO_X$-module $\pi_{Y\ast}\OO_Y$, which carries a
$G$-coaction by \cref{rmk:action-extends}.
These two coherent reflexive $\OO_{X^{\reg}}$-algebras with $G$-coaction agree over the big open
$U\cap X^{\reg}$ of $X^{\reg}$, hence coincide, with their algebra and coaction structures, by a
second application of \cref{lem:reflexive-coaction}, now with $W=X^{\reg}$ and big open
$U\cap X^{\reg}$. Thus $\widetilde V=Y|_{X^{\reg}}$, so
$R(Y)=Y|_{X^{\reg}}$ is a torsor over~$X^{\reg}$.
	
We show that the reflexive extension functor $N$ lands in torsion-free $S_2$ quasi-torsors.
For a torsor $V\to X^{\reg}$ the
	algebra $\pi_{V\ast}\OO_V$ is finite locally free, so
	$\mathcal A\coloneqq j_\ast\pi_{V\ast}\OO_V$ is a coherent 
	reflexive $\OO_X$-algebra with $G$-coaction, and $Y\coloneqq\Spec_X\mathcal A\to X$ is
	finite, $\mathcal A$ being coherent. Its structure sheaf $\pi_{Y\ast}\OO_Y=\mathcal A$
	is torsion-free and $S_2$ over $\OO_X$\,; its $G$-invariants are
	\[
	\mathcal A^G=(j_\ast\pi_{V\ast}\OO_V)^G
	=j_\ast\bigl((\pi_{V\ast}\OO_V)^G\bigr)
	=j_\ast\OO_{X^{\reg}}=\OO_X,
	\]
	using that $j_\ast$ commutes with $G$-invariants, that
	$(\pi_{V\ast}\OO_V)^G=\OO_{X^{\reg}}$ as $V\to X^{\reg}$ is a torsor, and that
	$\OO_X$ is reflexive on the normal $X$\,; and $Y|_{X^{\reg}}=V$. Thus $Y$ is a
	torsion-free~$S_2$ $G$-quasi-torsor, the big open being $X^{\reg}$. The morphism $Y\to X$ is
	moreover surjective\,: the unit $\OO_X\to\mathcal A$ is injective, because
	$\OO_{X^{\reg}}\to\pi_{V\ast}\OO_V$ is so for the faithfully flat $V\to X^{\reg}$, so $Y\to X$
	is dominant, hence surjective, being finite.
	
	Finally, both sides are categories of coherent reflexive algebras with $G$-coaction\,: over $X$
	those whose restriction to $X^{\reg}$ is a $G$-torsor, and over $X^{\reg}$ the torsor algebras.
	On such algebras $j_\ast$ and $j^\ast$ are mutually inverse equivalences
	(\cref{lem:reflexive-coaction}). Since $R$ and $N$ carry the one
	category into the other, they are mutually inverse equivalences.

	For the last assertion, note that $V=Y|_{X^{\reg}}$ surjects onto the nonempty $X^{\reg}$, being
	a torsor. So $V$ is a nonempty open subscheme of $Y$, hence integral whenever $Y$ is. Conversely, let $V$ be integral. Being finite and dominant over
	$X^{\reg}$, it has generic fibre $\Spec k(V)$, so the stalk of $\pi_{V\ast}\OO_V$ at the generic
	point of $X$ is the field $k(V)$. The torsion-free algebra $j_\ast\pi_{V\ast}\OO_V$ therefore
	embeds into the constant sheaf $k(V)$ and is a sheaf of domains, so $Y$ is integral. Finally, a
	torsor $V\to X^{\reg}$ is finite locally free over the regular $X^{\reg}$, hence 
	Cohen--Macaulay and in particular $S_2$, so integrality of $V$ is the only condition to add on
	that side.
\end{proof}

\section{The relative dualizing sheaf of a quotient stack}
\label{sec:q-torsors_upper_shriek}

This section is pure group-scheme theory over an arbitrary base scheme $S$. 
The aim is \cref{prop:exceptional_inverse}. Let $H\subseteq G$ be finite locally
free group schemes over~$S$, with $G$ acting on an algebraic stack~$X$. We compute
the relative dualizing sheaf of the projection $[X/H]\to[X/G]$ of quotient stacks
in terms of modular characters and modules of integrals.

\subsection{Modular characters and integrals}

Let $K$ be a finite locally free group scheme over
a scheme~$S$, with structure map $\pi_K\colon K\to S$, unit section
$e\colon S\to K$, and multiplication $m\colon K\times_SK\to K$\,; the two
projections of $K\times_SK$ are denoted $\mathrm{pr}_1,\mathrm{pr}_2$. 
We will use freely the facts laid out in \S\ref{sec:upper_shriek}.

\subsubsection{Line bundles on the classifying stack $\BG K$}---
The considerations of this paragraph hold more generally for any flat,
affine, finitely presented $S$-group scheme $K$. 
Let $\BG K=[S/K]$ be its classifying stack, with structure morphism
$p_K\colon\BG K\to S$ and atlas $a_K\colon S\to\BG K$. It is presented by the
groupoid $K\rightrightarrows S$ with source and target $\pi_K$ and composition
the multiplication $m$\,: the fibre product $S\times_{\BG K}S$ classifies
automorphisms of the trivial $K$-torsor over $S$, hence is $K$ acting by right
translation. In particular the atlas is a $K$-torsor, and
the base change of $a_K$ along itself is~$\pi_K$.
Descending a line bundle $L$ on $S$ to $\BG K$ along $a_K$ means giving a
\emph{descent datum}, that is, an isomorphism $\phi\colon\pi_K^{*}L\isom\pi_K^{*}L$ over
$K$ (source and target pullbacks coincide) satisfying the cocycle condition
over $K\times_SK$. Such a $\phi$, being an automorphism of the invertible sheaf $\pi_K^{*}L$, is multiplication by a unit
$u\in\Gamma(K,\cO_K^{\times})$, and the cocycle condition
$m^{*}u=\mathrm{pr}_1^{*}u\cdot\mathrm{pr}_2^{*}u$ says that $u$ is group-like,
i.e.\ a character of~$K$ (equivalently, a descent datum is a
$K$-linearization, $K$ acting trivially on $S$). Thus the descent data on any
$L$ are canonically the characters
$\widehat K(S)\coloneqq \operatorname{Hom}_{S\text{-}\mathrm{gp}}(K,\Gm)$\,: the identity
datum descends $L$ to $p_K^{*}L$, and a character twists~it. 
For $\chi\in\widehat K(S)$, we write
$\cL(\chi)\in\operatorname{Pic}(\BG K)$ for the bundle obtained by descending
$\cO_S$ along the datum $\chi^{-1}$\,; the inversion is the usual
associated-bundle convention, $K$ acting on the fibre of $\cL(\chi)$ through
$\chi$, its sections being the functions $f$ with $f(xg)=\chi(g)^{-1}f(x)$.
Since the line bundles descended from $\cO_S$ are exactly those trivialized by
$a_K^{*}$, and since distinct characters give non-isomorphic line bundles (an
isomorphism is a unit of $\cO_S$ intertwining the data), the assignment
$\cL(-)$ is a group isomorphism
\[
\cL(-)\colon\widehat K(S)\;\xrightarrow{\ \sim\ }\;
\ker\!\big(a_K^{*}\colon\operatorname{Pic}(\BG K)\to\operatorname{Pic}(S)\big),
\]
with $(\BG\phi)^{*}\cL(\chi)\isom\cL(\chi\circ\phi)$ for every homomorphism
$\phi$ of flat, affine, finitely presented $S$-group schemes.

\subsubsection{The relative dualizing sheaf $\omega_{K/S}$ and the module of integrals}---
Being finite, $\pi_K$ has relative dualizing sheaf
\[
\omega_{K/S}\;=\;\cHom_{\cO_S}(\pi_{K*}\cO_K,\cO_S).
\]
The following lemma is well known. Over an affine base it is due to Pareigis
\cite{pareigis_when_hopf_algebras_are_frobenius_algebras}\,; we give a short
geometric proof, which still rests on the Larson--Sweedler theorem \cite{LS}
fibrewise.
\begin{lemma}\label{lem:Gorenstein}
The dualizing sheaf $\omega_{K/S}$ is an invertible $\cO_K$-module.
\end{lemma}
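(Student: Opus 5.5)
The plan is to reduce to a fibrewise statement and then use the group structure to spread it over all of $K$. Everything is local on $S$, so assume $S=\Spec A$ is affine. Then $K=\Spec B$ with $B$ a finite locally free Hopf algebra over $A$, and $\omega_{K/S}$ corresponds to the $B$-module $B^{\vee}=\Hom_A(B,A)$. We must show that $B^\vee$ is an invertible $B$-module, i.e.\ finite locally free of rank one over $B$.

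Finite presentation over $B$ is clear, since $B^\vee$ is finite projective over $A$ and $B$ is finite over $A$. Invertibility can therefore be checked after base change to the residue fields $\kappa(s)$, $s\in S$, together with flatness over $S$. Concretely, I would use the following criterion: a finitely presented $B$-module $M$ that is flat over $A$, and whose fibres $M\otimes_A\kappa(s)$ are invertible over $B\otimes_A\kappa(s)$, is invertible over $B$. This is the fibrewise flatness criterion, applied to $M$ over the finite flat $A$-algebra $B$, combined with Nakayama to get rank one. Here $B^\vee$ is $A$-flat, being $A$-projective. Moreover, by \S\ref{sec:upper_shriek} its formation commutes with arbitrary base change, since $\pi_{K*}\cO_K$ is locally free. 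Hence $B^\vee\otimes_A\kappa(s)=(B\otimes_A\kappa(s))^\vee$.

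This reduces the lemma to the case $S=\Spec\kappa$ with $\kappa$ a field and $B$ a finite-dimensional Hopf algebra, where the claim is that $B$ is a Frobenius algebra. This is where the Larson--Sweedler theorem enters. The space of left integrals $\int^l_{B^\vee}\subseteq B^\vee$ is one-dimensional. Moreover, the map $B\to B^\vee$, $b\mapsto b\rightharpoonup\lambda$ for a nonzero integral $\lambda$, is an isomorphism of $B$-modules, the inverse coming from the structure theorem for Hopf modules $B^\vee\cong \int\otimes B$. Thus $B^\vee\cong B$ as $B$-modules, which is what is needed.

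Geometrically, I would phrase this via the $K$-equivariance of $\omega_{K/S}$ under translation, which identifies $\omega_{K/S}\cong\pi_K^{*}e^{*}\omega_{K/S}$. It is this equivariance that also feeds into the definition of the module of integrals $e^*\omega_{K/S}$ later. The step I expect to be the main obstacle is the fibrewise-to-global passage: ensuring the base-change compatibility of $\omega$ and applying the fibrewise criterion correctly for a non-flat-over-$B$ module. I would try first to use Nakayama on $B$ localized at a point $y$ over $s$. The fibre isomorphism lifts to a surjection $B_y\to B^\vee_y$. Both sides are $A$-flat of the same rank, so the kernel is $A$-flat with zero fibre, hence zero.
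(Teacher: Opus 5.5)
Your proposal is correct and follows the paper's proof essentially verbatim. Base-change compatibility of $\omega_{K/S}$ reduces the question to the fibres, where Larson--Sweedler gives $\omega_{K_s/\kappa(s)}\isom\cO_{K_s}$, and finite presentation over $\cO_K$ together with the fibrewise flatness criterion then yields invertibility. Your alternative Nakayama argument at a point $y$ over $s$ also works, although ``same rank'' is not meaningful for the non-finite $A$-module $B_y$; it is not needed, since flatness of $B^\vee_y$ over $A$ kills the fibre of the kernel, and that kernel is finitely generated over the local ring $B_y$.
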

\begin{proof}
As $\pi_{K*}\cO_K$ is finite locally free over $\cO_S$,
so is $\omega_{K/S}$, and its formation commutes with arbitrary base change
$S'\to S$. In particular, for each $s\in S$,
\[
\omega_{K/S}\otimes_{\cO_S}\kappa(s)\;=\;\cHom_{\kappa(s)}(\cO_{K_s},\kappa(s))\;=\;\omega_{K_s/\kappa(s)}.
\]
By Larson--Sweedler~\cite{LS} the
finite-dimensional Hopf algebra $\cO_{K_s}$ is a Frobenius algebra, so
$\omega_{K_s/\kappa(s)}\isom\cO_{K_s}$ as $\cO_{K_s}$-modules, in particular free of rank one.
Being finite locally free over $\cO_S$, $\omega_{K/S}$ is a finitely presented $\cO_K$-module ($\cO_K$ being a finite, finitely presented $\cO_S$-algebra), flat over~$\cO_S$,
whose fibre over each $s\in S$ is a free, hence flat, $\cO_{K_s}$-module.
By the fibrewise criterion for flatness \cite[\href{https://stacks.math.columbia.edu/tag/039C}{Tag 039C}]{stacks-project},
$\omega_{K/S}$ is flat over $\cO_K$\,; being finitely presented and flat, it is
finite locally free over $\cO_K$, of rank one because it is so on every fibre.
Hence $\omega_{K/S}$ is an invertible $\cO_K$-module. 
\end{proof}

For every $S$-scheme
$T$ and point $g\in K(T)$, the left and right translations $L_g$ and $R_g$ are
$T$-automorphisms of $K_T$. 
Functoriality of the relative dualizing sheaf
therefore makes each of them act on $\omega_{K/S}$. 
These two actions are consumed in turn\,: $\omega_{K/S}$
descends in two stages along $K\xrightarrow{\ \pi_K\ }S\xrightarrow{\ a_K\ }\BG K$.
The left action performs the first descent, and the right action, which
commutes with the left action, survives as the descent datum
for the second.

Evaluating at the universal point
$\id_K\in K(K)$ turns the left action into a descent datum for~$\pi_K$. 
Indeed
$\mathrm{pr}_1$ is the base change of $\pi_K$ along $\pi_K$, yielding a canonical isomorphism
$\omega_{\mathrm{pr}_1}\isom\mathrm{pr}_2^{*}\omega_{K/S}$. 
Left translation by
the universal point is the automorphism $\sigma$ of $K\times_SK$
over the first factor, given on points by $(g,x)\mapsto(g,gx)$. 
Since $\mathrm{pr}_1 \circ \sigma = \mathrm{pr}_1$ and
$\mathrm{pr}_2\circ\sigma=m$ we get a canonical isomorphism
$m^{*}\omega_{K/S}\isom\mathrm{pr}_2^{*}\omega_{K/S}$ satisfying the cocycle
condition. 
Since $\pi_K$ is the trivial torsor, descent along it gives
$\omega_{K/S}\isom\pi_K^{*} L$ for a line bundle $ L$ on $S$,
compatibly with the left action. Restricting along the unit section identifies
$ L$, since $e^{*}\pi_K^{*}L= L$. Writing
$\int_K\coloneqq e^{*}\omega_{K/S}$ for this invertible $\cO_S$-module, the
\emph{module of integrals}, we obtain
\[
\omega_{K/S}\isom\pi_K^{*}\!\int_K.
\]

\subsubsection{Modular characters and left integrals}---
The right action commutes with the left one, so it descends to $\int_K$. As $K$
acts trivially on $S$, each $g\in K(T)$ acts on $\int_K|_T$ by multiplication
by a unit, which we denote $\chi_K(g)\in\Gamma(T,\cO_T^{\times})$. The
assignment $g\mapsto\chi_K(g)$ is natural in $T$, so by Yoneda it is determined
by its value at the universal point, a single unit
$\chi_K\in\Gamma(K,\cO_K^{\times})=\Gm(K)$\,; and $R_{gh}=R_h\circ R_g$ makes it
multiplicative. Thus the right action is given by a character, the
\emph{modular character}
\[\chi_K \in \widehat K(S)=\operatorname{Hom}_{S\text{-}\mathrm{gp}}(K,\Gm).\]
The formation of $\omega_{K/S}$ commutes with arbitrary base change $S'\to S$, and the unit
section and the translations are compatible with base change\,; hence so does the formation of
$\int_K$, of $\chi_K$ and of the relative modular character $\chi_{H/G}$ defined below.
Concretely, over an open $U\subseteq S$ on which $\int_K$ is trivial, the
 isomorphism $\omega_{K/S}\isom\pi_K^{*}\!\int_K$ spreads a generator of $\int_K|_U$ out to a
left-invariant generator $\Lambda_L$ of $\omega_{K/S}$ over $\pi_K^{-1}(U)$, a
\emph{left integral}, and
\[
R_g^{*}\Lambda_L\;=\;\chi_K(g)\,\Lambda_L.
\]
Spreading out by right translation instead produces a \emph{right integral}, so
$\chi_K$ measures the failure of a left integral to be a right one. It is
the finite-group-scheme analogue of the modular function of Haar measure
\cite[\S~I.8.8]{jantzen_representations_of_algebraic_groups}.

\subsubsection{The dualizing sheaf $\omega_{\BG K/S}$}\label{sec:omega_bk}---
Since the base change of $a_K$ along itself is $\pi_K$, compatibility of
the dualizing sheaf with base change gives a canonical isomorphism
$\pi_K^{*}\omega_{a_K}\isom\omega_{K/S}$, and hence
\[\omega_{a_K} = e^{*}\pi_K^{*}\omega_{a_K}\isom e^{*}\omega_{K/S}=\int_K.\]
The atlas is the trivial torsor $\pi_K$, whose automorphisms are the right
translations. Under the identification just made, the descent datum of
$\omega_{a_K}$ along $K\rightrightarrows S$ is therefore the residual right
action of $K$ on $\int_K$ constructed above, that is, the modular
character~$\chi_K$.

We now \emph{define} $\omega_{\BG K/S}\in\operatorname{Pic}(\BG K)$ to be the inverse
of the bundle obtained by descending $\int_K$ along the modular character $\chi_K$. In particular, $a_K^{*}\omega_{\BG K/S}\isom\int_K^{-1}$\,; the exponent is
the one dictated by the composition law along $p_K\circ a_K=\id_S$, namely
$a_K^!\omega_{\BG K/S} \isom a_K^{*}\omega_{\BG K/S} \otimes \omega_{a_K} \isom\cO_S$. Comparing descent data
yields
\[
\omega_{\BG K/S}\;\isom\;\cL(\chi_K)\otimes p_K^{*}\!\int_K^{-1}.
\]
Indeed, the bundle descended from $\int_K$ with datum $\chi_K$ is
$\cL(\chi_K^{-1})\otimes p_K^{*}\!\int_K$, and $\omega_{\BG K/S}$ is its
inverse. 
We note that no duality theory for the non-representable $p_K$ is
needed here. 
However,
when $S$ is Noetherian and admits a dualizing complex, the bundle $\omega_{\BG K/S}$  just defined is 
intrinsically $p_K^{!}\cO_S$, a line bundle in degree $0$ since it is so after the faithfully flat
pullback $a_K^{*}$\,: duality for Noetherian algebraic stacks is available in any
characteristic, with classifying stacks of finite locally free group schemes
among its motivating examples\,; see
\cite[\S2.3]{arinkin_bezrukavnikov}, whose $\pi^{!}$ of \cite[Rmk.~2.14]{arinkin_bezrukavnikov} is defined for representable
morphisms, the non-representable $p_K$ being handled instead by the relative dualizing complexes of
\cite[Def.~2.16 and Prop.~2.18]{arinkin_bezrukavnikov}.

\subsubsection{Unimodular group schemes}---
Under the identification
$\widehat K(S)=\mathrm{GpLike}(\cO_K)$ of characters with group-like global sections of $\pi_{K*}\cO_K$ (cf.\ \cite[Exp.~VIII]{SGA3II}),
$\chi_K$ is the distinguished group-like element of $\cO_K$, equivalently the
modular function of the dual (cocommutative) Hopf algebra~$\cO_K^{\vee}$, i.e.\
the distinguished invertible object of $\mathrm{Rep}(K)$
(for $S$ the spectrum of a field, see \cite[Prop.~I.8.19]{jantzen_representations_of_algebraic_groups}).
We call $K$ \emph{unimodular} when $\chi_K$ is trivial\,; over a field this is
the condition that the left and right integrals of $\cO_K^{\vee}$ coincide
\cite[I.8.8 and I.8.12]{jantzen_representations_of_algebraic_groups}. 
Unimodularity holds over any
base when $K$ is commutative, and over a field when $K$
is linearly reductive ($\cO_K^{\vee}$ semisimple) or unipotent ($\widehat K(k)=0$), but not
in general.
For instance, let $K=B_1$
be the Frobenius kernel of a Borel $B=T\ltimes U\subset\mathrm{SL}_2$ over a
field $k$ of characteristic~$p$, so $B_1=\mu_p\ltimes\alpha_p$ with $T_1=\mu_p$
acting on $U_1=\alpha_p$ through the positive root. The integral of
$\alpha_p=\Spec k[x]/(x^{p})$ is spanned by the top functional
$(x^{p-1})^{\vee}$, on which $\mu_p$ acts with weight $\pm2$,
the sign depending
on normalizations. Since $\widehat{\alpha_p}(k)=0$, characters of $B_1$ factor
through $\mu_p$\,; and the factor $\mu_p$ being unimodular, $\chi_{B_1}$ restricts
on $\mu_p$ to the character of its conjugation action on $\int_{\alpha_p}$.
Hence this character is $t\mapsto t^{\pm2}$ on $\mu_p$, nontrivial for $p$
odd.

\subsubsection{Relative modular characters}---
For $H\subseteq G$, set
\[
\chi_{H/G}\;\coloneqq\;\chi_H\cdot(\chi_G|_H)^{-1}\in\widehat H(S),
\]
and call it the \emph{relative modular character}.
Over a field this recovers a classical representation-theoretic invariant\,: the character $\chi_{H/G}$ is the twist by which
the left and right adjoints of restriction along $\Rep(G)\to\Rep(H)$ differ\,; precisely,
coinduction is induction composed with tensoring by $\chi_{H/G}$
\cite[Prop.~I.8.17]{jantzen_representations_of_algebraic_groups}. Note that the modular function
$\delta_K$ of \cite[\S~I.8.8]{jantzen_representations_of_algebraic_groups} is the inverse of the
character $\chi_K$ normalized as above, so that $\delta_G|_H\cdot\delta_H^{-1}=\chi_{H/G}$.
Sufficient conditions for the triviality of $\chi_{H/G}$ are collected in
\cref{lem:chi-trivial} below.

\subsection{The relative dualizing sheaf of $\BG H \to \BG G$}
The main result of this subsection computes~$\omega_\pi$ for
$\pi\colon[X/H]\to[X/G]$. The computation is carried out first for
$f\colon\BG H\to\BG G$, of which $\pi$ is a base change.

\begin{proposition}\label{prop:exceptional_inverse}
	Let $G$ be a finite locally free group scheme over a scheme $S$ acting on an algebraic stack $X$ over $S$. 
	Let $H\subseteq G$ be a closed subgroup scheme that is finite locally free
	over $S$, and denote by $\pi\colon[X/H]\to[X/G]$ the canonical projection.
	Then $\pi$ is finite locally free and its relative dualizing sheaf $\omega_{\pi}$
	is invertible and satisfies 
	\[
	\omega_\pi\;\isom\;h^*\cL(\chi_{H/G})\otimes \rho^*\!\big(\textstyle\int_G\otimes\int_H^{-1}\big),
	\]
	where $\rho\colon[X/H]\to S$ is the structure map and $h\colon[X/H]\to \BG H$ is the morphism classifying the $H$-torsor
	$X\to[X/H]$.
	Furthermore, for every quasi-coherent $\cF$ on $[X/G]$ one has
	\[
	\pi^{!}\cF\;\isom\;\pi^*\cF\otimes \omega_\pi.
	\]
\end{proposition}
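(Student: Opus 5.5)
We first treat the universal case $f\colon\BG H\to\BG G$ and then obtain $\pi$ by base change. The square with corners $[X/H]$, $\BG H$, $[X/G]$, $\BG G$ (horizontal maps classifying the torsors $X\to[X/H]$ and $X\to[X/G]$) is cartesian. Indeed, both $[X/H]$ and $[X/G]\times_{\BG G}\BG H$ parametrize $G$-torsors $P$ with a $G$-equivariant map to $X$ together with a reduction of structure group to $H$. The map $f$ is representable, and its base change along the atlas $a_G\colon S\to\BG G$ is the quotient $G/H\to S$. This quotient is finite locally free over $S$, being the fppf quotient of $G$ by the finite locally free $H$, with $G\to G/H$ an $H$-torsor. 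Hence $f$, and therefore $\pi$, is finite locally free. By \S\ref{sec:upper_shriek}, the formation of $\omega$ commutes with arbitrary base change for finite locally free morphisms, and $\pi^!\cF\isom\pi^*\cF\otimes\omega_\pi$ holds. So once $\omega_f$ is computed we get $\omega_\pi\isom h^*\omega_f$. Since $h^*p_H^*=\rho^*$ and $h^*\cL(\chi)$ is as in the statement, it suffices to show
\[
\omega_f\isom\cL(\chi_{H/G})\otimes p_H^*\!\big(\textstyle\int_G\otimes\int_H^{-1}\big).
\]

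For $\omega_f$, use the composition law $(f\circ a_H)^!\isom a_H^!\circ f^!$, with $f\circ a_H=a_G$ and all three maps finite locally free. This gives
\[
\omega_{a_G}\isom a_H^*\omega_f\otimes\omega_{a_H}.
\]
By \S\ref{sec:omega_bk}, $\omega_{a_K}\isom\int_K$, so $a_H^*\omega_f\isom\int_G\otimes\int_H^{-1}$, and in particular $\omega_f$ is invertible. To pin down $\omega_f$ in $\Pic(\BG H)$, we also need its descent datum, i.e.\ the character of $H$ by which the right action of $H$ on the atlas $S\to\BG H$ acts on $a_H^*\omega_f$. The isomorphism above is functorial and compatible with base change. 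Its descent data are therefore compatible: the datum of $\omega_{a_G}$ restricted to $H$ is $\chi_G|_H$, and that of $\omega_{a_H}$ is $\chi_H$, both computed in \S\ref{sec:omega_bk}. The datum on $a_H^*\omega_f$ is then their ratio, $\chi_G|_H\cdot\chi_H^{-1}=\chi_{H/G}^{-1}$. By the convention that $\cL(\chi)$ is $\cO_S$ descended along $\chi^{-1}$, this yields $\omega_f\isom\cL(\chi_{H/G})\otimes p_H^*(\int_G\otimes\int_H^{-1})$. The sign should be double-checked against the formula $\omega_{\BG K/S}\isom\cL(\chi_K)\otimes p_K^*\int_K^{-1}$, via the formal consistency check $\omega_f\isom\omega_{\BG H/S}\otimes f^*\omega_{\BG G/S}^{-1}$.

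The main obstacle is this descent-datum comparison. One has to make precise that the composition isomorphism for $a_G=f\circ a_H$ is equivariant for the right $H$-action on the atlas of $\BG H$. Here $H$ acts through $H\subseteq G$ on the atlas of $\BG G$, so the datum of $\omega_{a_G}$ pulled back is indeed $\chi_G|_H$. This should follow from compatibility of $(\,\cdot\,)^!$ with arbitrary base change applied along $H\to S$, using the groupoid presentations $H\rightrightarrows S$ and $G\rightrightarrows S$. Everything else is formal from \S\ref{sec:upper_shriek} and \S\ref{sec:omega_bk}.
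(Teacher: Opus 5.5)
Your proposal is correct and follows the paper's proof essentially step for step. You reduce to $f\colon\BG H\to\BG G$ via the $2$-cartesian square, get $a_H^*\omega_f\isom\int_G\otimes\int_H^{-1}$ from $a_G^!\isom a_H^!\circ f^!$, and read off the descent datum $\chi_G|_H\cdot\chi_H^{-1}=\chi_{H/G}^{-1}$ by comparing right $H$-actions on integral generators. The paper justifies that last step just as you anticipate: the canonical composition isomorphism commutes with base change along $H\rightrightarrows S$, hence with the $R_u$. Your sign check against $\omega_{\BG H/S}\otimes f^*\omega_{\BG G/S}^{-1}$ is consistent too, using $(\BG\phi)^*\cL(\chi)\isom\cL(\chi\circ\phi)$.
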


\begin{proof}
	Write $f\colon \BG H\to \BG G$ for the morphism induced by $H\subseteq G$.
	As $H$ is finite locally
	free over~$S$, the fppf quotient $G/H$ is an algebraic space and $G\to G/H$ is an
	$H$-torsor \cite[\href{https://stacks.math.columbia.edu/tag/06PH}{Tag 06PH}]{stacks-project}. 
	As $G$ and $H$ are finite locally free
	over $S$, the morphism $G/H\to S$ is again finite locally free\,: 
	it is finite
	(separated, since $H$ is closed in $G$\,; quasi-finite\,; and universally closed, as
	$G\to S$ is and $G\to G/H$ is surjective) and flat and of finite presentation (both descending along the
	faithfully flat $G\to G/H$)\,; and $G/H$, being affine over the scheme $S$, is a
	scheme.
	Base-changing $f$ along the atlas $a_G\colon S\to \BG G$ yields the scheme morphism $G/H\to S$\,; since being
finite locally free is fppf-local on the target, $f$ is finite locally free, and
	$f_{*}\cO_{\BG H}$, corresponding to $\pi_{G/H*}\cO_{G/H}$ with its $G$-equivariant structure, is locally free of finite
	rank.

	We compute $\omega_f$ using only finite
	duality for the finite locally free maps $a_G,a_H,f$ and the descent
	construction of $\omega_{\BG{-}/S}$, with no recourse to duality for the
	non-representable structure morphisms. Since extension of structure group carries the
	trivial $H$-torsor to the trivial $G$-torsor, $f\circ a_H\isom a_G$\,; this is a
	composite of representable finite locally free morphisms, so the functoriality
	of finite duality gives $a_G^{!}\isom a_H^{!}\circ f^{!}$.
	Evaluating at $\cO_{\BG G}$ and using that
	$a_H^{!}=a_H^{*}(-)\otimes\omega_{a_H}$ with $\omega_{a_H}\isom\int_H$ and
	$f^{!}\cO_{\BG G}=\omega_f$, we get
	\[
	\int_G\;\isom\;a_G^{!}\cO_{\BG G}\;\isom\;a_H^{!}\,\omega_f\;\isom\;
	a_H^{*}\omega_f\otimes\int_H,
	\qquad\text{and so}\qquad
	a_H^{*}\omega_f\;\isom\;\int_G\otimes\int_H^{-1}.
	\]
		Since $a_H^{*}\omega_f\isom\int_G\otimes\int_H^{-1}$ is invertible and $a_H$
		is faithfully flat, $\omega_f$ is an invertible sheaf on $\BG H$\,; it remains
		to identify its descent datum along $H=S\times_{\BG H}S\rightrightarrows S$,
		which promotes the atlas isomorphism to $\BG H$. Locally on $S$, let $v$ generate
		$a_H^{*}\omega_f$, so that the functoriality isomorphism
		$\int_G\isom a_H^{*}\omega_f\otimes\int_H$ found above reads
		$\Lambda_G=v\otimes\Lambda_H$, with $\Lambda_G,\Lambda_H$ left-integral
		generators of $\int_G,\int_H$. As in \S\ref{sec:omega_bk}, the
		descent datum is right translation by $H$. The morphism
		$S\times_{\BG H}S\to S\times_{\BG G}S$ induced by $f$ is the inclusion $H\subseteq G$, so the
		$H$-descent datum of $\omega_{a_G}\isom\int_G$ is the restriction $\chi_G|_H$ of its
		$G$-datum\,; and the functoriality isomorphism of finite duality commutes with the base changes along the two
		projections $H\rightrightarrows S$ (\S\ref{sec:upper_shriek}) and, being canonical, with the
		automorphisms $R_u$.
		Hence it identifies
		the descent data on the two sides of $\int_G\isom a_H^{*}\omega_f\otimes\int_H$. Being
		induced by the finite-duality isomorphisms the datum is multiplicative on tensor
		products and compatible with the isomorphism just displayed, and on the
		integral generators it is the modular character,
		\[
		R_u^{*}\Lambda_G=\chi_G(u)\,\Lambda_G,\qquad
		R_u^{*}\Lambda_H=\chi_H(u)\,\Lambda_H,\qquad u\in H
		\]
		(the first read through $H\subseteq G$). Applying $R_u^{*}$ to
		$\Lambda_G=v\otimes\Lambda_H$ gives
		$\chi_G(u)\,\Lambda_G=(R_u^{*}v)\otimes\chi_H(u)\,\Lambda_H$, hence
		\[
		R_u^{*}v=\chi_G(u)\,\chi_H(u)^{-1}\,v,\qquad u\in H.
		\]
		This is the descent datum of $\cL(\chi_{H/G})$\,: recall that $\cL(\chi)$
		carries descent datum $\chi^{-1}$ and that
		$\chi_{H/G}=\chi_H\cdot(\chi_G|_H)^{-1}$. Meanwhile
		$\int_G\otimes\int_H^{-1}$ is pulled back from $S$ along $p_H$, with trivial
		descent datum. Therefore
		\[
		\omega_f\isom\cL(\chi_{H/G})\otimes p_H^{*}\!\big(\textstyle\int_G\otimes\int_H^{-1}\big).
		\]

	Finally, write $h_G\colon[X/G]\to \BG G$ for the morphism classifying the $G$-torsor
$X\to[X/G]$.
For a scheme $T$, a $T$-point of $[X/G]\times_{\BG G}\BG H$ is a
	$G$-torsor with a $G$-equivariant map to~$X$ together with a reduction of its
	structure group to $H$, i.e.\ an $H$-torsor with an $H$-equivariant map to $X$.
	Thus the square
	\[
	\begin{array}{ccc}
	[X/H] & \xrightarrow{\ h\ } & \BG H\\[2pt]
	{\scriptstyle\pi}\big\downarrow & & \big\downarrow{\scriptstyle f}\\[2pt]
	[X/G] & \xrightarrow{\ h_G\ } & \BG G
	\end{array}
	\]
	is $2$-cartesian.
	Hence $\pi$, a base change of $f$, is finite locally free.
	 Pulling back along $h$ and using
	$p_H\circ h=\rho$, we get
	\[
	\omega_\pi\isom h^{*}\omega_f\isom
	h^*\cL(\chi_{H/G})\otimes\rho^{*}\!\big(\textstyle\int_G\otimes\int_H^{-1}\big).
	\]
Here we have used that
$f_{*}\cO_{\BG H}$ is locally free of finite rank, giving $\omega_\pi\isom h^{*}\omega_f$\,; see \S\ref{sec:upper_shriek}. In particular $\omega_\pi$ is invertible, and, $\pi$ being finite locally free, the same \S\ref{sec:upper_shriek} gives $\pi^{!}\cF\isom\pi^{*}\cF\otimes\omega_\pi$ for every quasi-coherent $\cF$ on $[X/G]$.
\end{proof}

Finally, we collect sufficient conditions for the relative modular character to be trivial.

\begin{lemma}\label{lem:chi-trivial} 
	The relative modular character
	$\chi_{H/G}=\chi_H\cdot(\chi_G|_H)^{-1}\in\widehat H(S)$ is trivial under any of the following conditions\,:
	\begin{enumerate}[label={(\alph*)}]
		\item \label{cond:nochar} $\widehat{H}(S)=0$, e.g., $H=\{e\}$, or $H=\alpha_p$ or $\bbZ / p\bbZ$ over a reduced $S$ of positive characteristic~$p$. 
		\item \label{cond:unimod} $H$ is unimodular and $\chi_G$ restricts trivially to $H$. The latter
		condition holds for instance if $G$ is unimodular, or if, over a field, $H\subseteq
		G^{\mathrm{der}}$.
		\item \label{cond:normal} $H$ is normal in $G$.
	\end{enumerate}
\end{lemma}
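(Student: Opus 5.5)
We treat the three conditions separately. In each case we show directly that the character $\chi_{H/G}=\chi_H\cdot(\chi_G|_H)^{-1}$ is trivial. Where possible we reduce to the definitions of $\chi_K$ as the residual right-translation action on $\int_K$, using that their formation commutes with base change.

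For \ref{cond:nochar}, $\chi_{H/G}$ is an element of the trivial group $\widehat H(S)$, so it is trivial. For the examples, first suppose $H=\{e\}$; then $\widehat H(S)$ is trivial. Next suppose $S$ is reduced of characteristic $p$. A character of $\alpha_p$ or of $\bbZ/p\bbZ$ is a group-like unit $u$ of $\cO_S[x]/(x^p)$, respectively of $\prod_{\bbZ/p}\cO_S$, and it satisfies $u^p=1$ because the group is $p$-torsion. For $\bbZ/p$, each component $u_i\in\Gamma(S,\cO_S)$ satisfies $(u_i-1)^p=u_i^p-1=0$, so $u_i=1$ by reducedness. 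For $\alpha_p$, write $u=\sum_i a_ix^i$. Then $u^p=a_0^p$, so $a_0=1$. The group-like condition $u(x+y)=u(x)u(y)$ in $\cO_S[x,y]/(x^p,y^p)$ then forces $u=\exp(a_1x)$, truncated. Comparing the coefficients of $x^{p-1}y$, which appear on the right but not on the left (because of $x^p=0$), gives $a_1^p=0$, hence $a_1=0$ and $u=1$.

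For \ref{cond:unimod}, the hypotheses give $\chi_H=1$ and $\chi_G|_H=1$, so the product is trivial. For the sufficient conditions: if $G$ is unimodular, then $\chi_G=1$. If instead $H\subseteq G^{\mathrm{der}}$ over a field, then $\chi_G$ is a homomorphism to the commutative group $\Gm$, so it kills $G^{\mathrm{der}}$ and hence $H$.

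Case \ref{cond:normal} is the main step. Here I would use \cref{prop:exceptional_inverse} with $X=G$, on which $G$ acts by left translation. Then $[G/G]=S$ and $[G/H]=H\backslash G$, and $\pi\colon H\backslash G\to S$ is finite locally free. The projection $G\to H\backslash G$ is an $H$-torsor, and \cref{prop:exceptional_inverse} gives $\omega_\pi\cong h^*\cL(\chi_{H/G})\otimes\rho^*(\int_G\otimes\int_H^{-1})$. The idea is to compute $\omega_\pi$ a second time, using normality. When $H$ is normal, $H\backslash G=G/H=:Q$ is a finite locally free group scheme, and $\pi$ is its structure map. By the construction of the module of integrals, $\omega_{Q/S}\cong\pi_Q^*\int_Q$ is pulled back from $S$. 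Therefore $h^*\cL(\chi_{H/G})$ is pulled back from $S$ as well. Pulling back further along $G\to Q$, the torsor pullback of $h^*\cL(\chi)$ is canonically trivial, but the comparison must be made equivariantly. Concretely, the $H$-equivariant structure on $\cO_G$ that this induces is twisted by $\chi_{H/G}$ composed with the projection $G\times H\to H$. On the other hand, a line bundle pulled back from $S$ carries the untwisted structure. The two structures differ by a unit on $G$ transforming via $\chi_{H/G}$ under the right $H$-action. Restricting this unit to the identity section $e$, and using $e\cdot u=u$ for $u\in H$, forces $\chi_{H/G}(u)$ to be a constant function of $u$ that equals $1$ at $u=e$. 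Hence $\chi_{H/G}=1$.

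The main obstacle is the bookkeeping in this last comparison, namely that the two descent data on $a_H^*$ really differ exactly by $\chi_{H/G}$. As a fallback, I would argue directly with integrals. Normality of $H$ gives $\int_G\cong\int_H\otimes\int_Q$, compatibly with the right $H$-action. Since $H$ acts trivially on $Q$ by right translation, the $H$-action on $\int_Q$ is trivial. Comparing the $H$-actions on the two sides then yields $\chi_G|_H=\chi_H$.
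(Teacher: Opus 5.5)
\textbf{There is a genuine gap in case (c).} Your arguments for (a) and (b) are fine. Your check that $\widehat{\alpha_p}(S)$ and $\widehat{\ZZ/p}(S)$ vanish for reduced $S$ is correct, and it is more than the paper spells out.

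The problem is the last step of your main argument for (c). From $\omega_{Q/S}\isom\pi_Q^*\int_Q$ you correctly get that $h^*\cL(\chi_{H/G})$ is pulled back from $S$. Locally on $S$, this gives a unit $f$ on $G$ with $f(ug)=\chi_{H/G}(u)^{\pm1}f(g)$ for $u\in H$. But evaluating at $g=e$ only gives $\chi_{H/G}(u)^{\pm1}=f(u)f(e)^{-1}$, and nothing forces $f|_H$ to be constant.

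This is not just bookkeeping: the map $\chi\mapsto h^*\cL(\chi)$ from $\widehat H(S)$ to $\Pic(Q)$ modulo $\pi_Q^*\Pic(S)$ is far from injective. Any $\chi$ that is the restriction of a character $\tilde\chi$ of $G$ gives a trivial bundle, with nowhere vanishing section $\tilde\chi^{\pm1}$. For $G=H\times Q$ this holds for every $\chi\in\widehat H(S)$. So knowing that $h^*\cL(\chi_{H/G})$ comes from $S$ cannot by itself force $\chi_{H/G}=1$. Taking $X=G$ and pulling back along $h\colon Q\to\BG H$ throws away exactly the information you need.

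\textbf{How the paper avoids this.} It stays on $\BG H$, i.e.\ takes $X=S$ with trivial action so that $h=\id$; there $\cL(-)$ is injective on $\widehat H(S)$. The morphism $f\colon\BG H\to\BG G$ is the base change of $a_Q\colon S\to\BG Q$ along $\BG G\to\BG Q$, so $\omega_f\isom p_H^*\int_Q$ carries the trivial descent datum. Comparing with \cref{prop:exceptional_inverse} and pulling back along $a_H$ gives first $\int_G\isom\int_H\otimes\int_Q$, then $\cL(\chi_{H/G})\isom\cO_{\BG H}$, hence $\chi_{H/G}=1$.

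You could instead keep $X=G$ and also track equivariance under the commuting right $G$-translations on $X$. That pins $f$ down to a scalar multiple of a character of $G$ pulled back from $Q$, which is trivial on $H$. But this bookkeeping amounts to the same descent.

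\textbf{Your fallback.} It states the right identity, but ``compatibly with the right $H$-action, $H$ acting trivially on $\int_Q$'' is precisely the content of (c), not an input. The right action on $\int_G$ is a residual action, defined after descending along left translations. You would have to show that the decomposition coming from $G\to Q\to S$ intertwines it with $\chi_H\otimes 1$. In the paper, the $2$-cartesian square $\BG H\simeq S\times_{\BG Q}\BG G$ is what supplies this compatibility.
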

\begin{proof}
	Under \labelcref{cond:nochar} there is nothing to prove, $\chi_{H/G}$ being by definition an
	element of $\widehat H(S)$. Under~\labelcref{cond:unimod} the two factors are separately
	trivial, $\chi_H$ by unimodularity of $H$ and $\chi_G|_H$ by hypothesis.

	Assume \labelcref{cond:normal} and set $Q\coloneqq G/H$. As
shown in the proof of \cref{prop:exceptional_inverse}, the fppf quotient $Q$ is a scheme, finite
locally free over $S$. Since $H$ is normal, the quotient of fppf
sheaves of groups is again one, so $Q$ is moreover a finite locally free $S$-group scheme.
Moreover, $f\colon \BG H \to \BG G$ is the base-change of $a_Q\colon S \to \BG Q$ along the morphism $\BG G \to \BG Q$ induced by the quotient homomorphism $ 
G \to Q$.
As $a_Q$ is finite locally
free with $\omega_{a_Q}\isom\int_Q$, and as the formation of $\omega$ for a finite locally free
morphism commutes with arbitrary base change, 
base change along this
square gives $\omega_f\isom p_H^{*}\!\int_Q$.
On the other hand,
\cref{prop:exceptional_inverse}  gives
$\omega_f\isom\cL(\chi_{H/G})\otimes p_H^{*}\big(\textstyle\int_G\otimes\int_H^{-1}\big)$.
Pulling back along $a_H$, and using $p_H\circ a_H=\id_S$ together with
$a_H^{*}\cL(\chi_{H/G})\isom\cO_S$, we first get $\int_G\isom\int_H\otimes\int_Q$,  and then $\cL(\chi_{H/G})\isom\cO_{\BG H}$. Hence $\chi_{H/G}$ is trivial.
\end{proof}

\section{Crepancy and ascent along quasi-torsor covers}
\label{sec:crepancy-ascent}

The main aim of this section is to establish \cref{prop:splinter-ascent}. Let
$H\subseteq G$ be finite locally free group schemes over~$S$, and let
$\pi\colon Y\to X$ be a quasi-torsor cover under~$G$. Set $Z\coloneqq Y/H$, with
projection $q\colon Z\to X$. Under hypotheses made precise below, both the
splinter property and global $F$-regularity ascend from $X$ to~$Z$ along~$q$. 
The key input is the crepant property $\omega_{Z/X}\isom\OO_{Z}$, which lets us
invoke the lifting \cref{lem:lift}. It is first obtained by \cref{prop:exceptional_inverse} 
over the preimage of the torsor locus. It is then extended
to all of~$Z$ in \cref{lem:exceptional_inverse}, by using that the pushforward of
$\omega_{Z/X}$ to $X$ is a reflexive coherent $\mcO_{X}$-module.

\subsection{Crepancy of intermediate quotients}
Recall that a quasi-torsor $\pi\colon Y\to X$ is said to be a \emph{quasi-torsor cover} if it is finite surjective with $X$ integral normal and $Y$ integral $S_2$. Recall also from \cref{rmk:action-extends} that the $G$-action on a torsion-free $S_2$ finite
quasi-torsor extends uniquely from the torsor locus to all of~$Y$.

	\begin{lemma}\label{lem:exceptional_inverse}
	Let $S$ be a scheme and let $X$ be a normal integral locally Noetherian
	$S$-scheme. Let $\pi\colon Y\to X$ be a
	quasi-torsor cover under a finite locally free $S$-group scheme~$G$,
	and let $H\subseteq G$ be a closed subgroup scheme, finite locally free
	over $S$.
	Set $Z\coloneqq Y/H=\Spec_X\bigl((\pi_*\cO_Y)^{H}\bigr)$. Then $Z$ is integral and $S_2$, and the natural factorization
	\[
	\pi\colon Y\xrightarrow{\,r\,}Z\xrightarrow{\,q\,}X
	\]
	is such that $r$ and $q$ are finite and surjective, $r$ is a quasi-torsor under $H$, and $q$ is a
	quasi-torsor cover under $G/H$ if $H$ is normal in $G$. If $Z$ is normal, for instance if $Y$
	is, then $r$ is a quasi-torsor cover.
	Moreover\,:
	\begin{enumerate}
		\item \label{item:lemHzero} If $H^0(X,\cO_X)=H^0(Y,\cO_Y)$, then $H^0(X,\cO_X)=H^0(Z,\cO_Z)$\,;
		\item \label{item:lemcrepant} Let $U\subseteq X$ be a big open over which $\pi$ restricts to a
		torsor $Y_U\coloneqq\pi^{-1}(U)\to U$, and set $V\coloneqq q^{-1}(U)$. If $\omega_{V/U}\isom\cO_V$, then
		$\omega_{Z/X}\isom\cO_Z$ in $\Coh(Z)$. This holds when $\Pic(V)=0$, and also when
		$\Pic(S)=0$ and the line bundle $h^{*}\cL(\chi_{H/G})$ is trivial over $[Y_U/H]\isom V$,
		where $h\colon[Y_U/H]\to\BG H$ is the classifying morphism\,; in particular when $\Pic(S)=0$
		and $\chi_{H/G}$ is trivial.
	\end{enumerate} 
\end{lemma}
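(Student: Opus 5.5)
The plan is to work first over the torsor locus $U$ and then pass to all of $X$ by reflexivity, as in \cref{lem:reflexive-coaction}. Write $\mathcal A\coloneqq\pi_*\cO_Y$. By \cref{rmk:action-extends}, $\mathcal A$ is a coherent reflexive $\cO_X$-algebra with $G$-coaction. Its $H$-invariants $\mathcal A^H$ form the kernel of the difference of two maps $\mathcal A\to\mathcal A\otimes\mathcal B_H$ into a torsion-free sheaf. Hence $\mathcal A^H$ is torsion-free, and it is reflexive: a section over a big open extends to $\mathcal A$ and stays invariant, because two maps into a torsion-free sheaf that agree on a dense open agree everywhere. So $q_*\cO_Z=\mathcal A^H$ is reflexive, and $Z$ is $S_2$ by \cref{rmk:action-extends}. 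Moreover $Z$ is integral, since $\mathcal A^H\subseteq\mathcal A$ is a subsheaf of domains. The inclusions $\cO_X\subseteq\mathcal A^H\subseteq\mathcal A$ give injective units, so $q$ and $r$ are dominant finite maps, hence surjective.

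Over $U$ the standard fppf theory applies. $Y_U\to Y_U/H=V$ is an $H$-torsor, and $V\isom[Y_U/H]$. If $H$ is normal, $V\to U$ is a $G/H$-torsor. So $r$ is a quasi-torsor under $H$ over the big open $V$ of $Z$; $V$ is big by going-down, as in \cref{rmk:action-extends}. Likewise $q$ is a quasi-torsor cover in the normal case. If $Z$ is normal, $r$ is a quasi-torsor cover by definition. When $Y$ is normal, $Z$ is normal because $\mathcal A^H=\mathcal A\cap k(Z)$ is integrally closed. For (i), the chain $H^0(X,\cO_X)\subseteq H^0(Z,\cO_Z)\subseteq H^0(Y,\cO_Y)$ collapses.

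For (ii), the key point is that $q_*\omega_{Z/X}=\cHom_{\cO_X}(q_*\cO_Z,\cO_X)$ is reflexive on the normal $X$ (\S\ref{sec:reflexive_sheaves}), and so is $q_*\cO_Z$. An isomorphism $\omega_{V/U}\isom\cO_V$ is a $q_*\cO_Z|_U$-linear isomorphism between the restrictions to $U$ of two reflexive $\cO_X$-modules. It therefore extends uniquely, via $j_*$, to an $\cO_X$-isomorphism $q_*\omega_{Z/X}\isom q_*\cO_Z$. Linearity over $q_*\cO_Z$ is preserved, since it holds on $U$ and the targets are torsion-free. This gives $\omega_{Z/X}\isom\cO_Z$. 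Here we use that the formation of $\omega$ commutes with restriction to opens, so that $\omega_{Z/X}|_V=\omega_{V/U}$.

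The sufficient conditions follow from \cref{prop:exceptional_inverse}. If $\Pic(V)=0$, the invertible sheaf $\omega_{V/U}$ is trivial; its invertibility comes from the proposition. Otherwise, apply the proposition with $X:=Y_U$ acted on by $G$. Then $[Y_U/G]\isom U$ and $[Y_U/H]\isom V$, and the proposition gives
\[\omega_{V/U}\isom h^*\cL(\chi_{H/G})\otimes\rho^*\bigl(\textstyle\int_G\otimes\int_H^{-1}\bigr).\]
The second factor is trivial when $\Pic(S)=0$, and the first is trivial by hypothesis, in particular when $\chi_{H/G}$ is trivial.

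The main obstacle is the identification $V\isom[Y_U/H]$ together with its compatibility with the stack map $\pi\colon[Y_U/H]\to[Y_U/G]$. This needs $H$ to act freely on $Y_U$, which holds because it is a subgroup of the torsor group, so that the quotient stack is the scheme $Y_U/H=\Spec(\mathcal A|_U)^H$. I would check this fppf-locally on $U$, where $Y_U\isom G\times U$ and $Y_U/H\isom (G/H)\times U$.
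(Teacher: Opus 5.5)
Your proposal is correct and follows the paper's proof essentially step for step. It uses the same invariant subalgebra $\mathcal A^H=\ker\bigl(\sigma-(\id\otimes1)\bigr)$, the same going-down and fppf-quotient argument over the torsor locus, and the same extension of $\omega_{V/U}\isom\cO_V$ to $\omega_{Z/X}\isom\cO_Z$ through the reflexive $\cO_X$-modules $q_*\omega_{Z/X}$ and $q_*\cO_Z$, with the sufficient conditions read off from \cref{prop:exceptional_inverse}.

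The only deviation is minor and concerns reflexivity of $\mathcal A^H$. You obtain it directly as the kernel of a map from the reflexive $\mathcal A$ into a torsion-free sheaf, via the criterion that sections extend across closed subsets of codimension $\ge2$ in every open. The paper instead writes $\mathcal A^H=j_*\bigl((\mathcal A|_U)^H\bigr)$ and uses that $(\mathcal A|_U)^H$ is locally free, by descending flatness along the $H$-torsor $Y_U\to Z_U$.
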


\begin{proof}
	Write $F=k(X)$ and $L=k(Y)$\,; since $\pi$ is finite and surjective, $L/F$ is a
	finite field extension. Fix a big open $U\subseteq X$ such that $Y_U\coloneqq\pi^{-1}(U)\to U$ is a
	$G_U$-torsor, taking for \labelcref{item:lemcrepant} the big open given there.

	Because $\pi$ is finite, $\mathcal A\coloneqq\pi_*\cO_Y$ is a finite
	$\cO_X$-algebra. It is torsion-free as $Y$ is integral and $\mathcal A\hookrightarrow L$. On the other hand, $Y$ is $S_2$ by \cref{def:quasitorsor}, and for the finite $\pi$
	over the normal base $X$ the module and scheme $S_2$ conditions coincide (see
	\cref{rmk:action-extends}), so that $\mathcal A$ is $S_2$ over $\cO_X$. Therefore, $\mathcal A$ is
	reflexive.
	
	 Let $\mathcal B_H$ denote the
	coordinate algebra of $H_X\coloneqq H\times_S X$, that is, the pullback of the finite locally free
	$\cO_S$-algebra $\pi_{H*}\cO_H$ along the structure morphism $X\to S$. The $G$-action on~$Y_U$ dualizes to a coaction on $\mathcal A|_U$
	which, restricted along $H\subseteq G$, gives an $H$-coaction valued in
	$\mathcal A|_U\otimes_{\cO_U}\mathcal B_H|_U$. By \cref{lem:reflexive-coaction}, with
	$j\colon U\hookrightarrow X$ the big open inclusion, it extends uniquely to a homomorphism
	$\sigma\colon\mathcal A\to\mathcal A\otimes_{\cO_X}\mathcal B_H$ satisfying the comodule and
	multiplicativity identities everywhere.
	Thus $H$ acts on $Y$ over
	$X$, extending its action on $Y_U$, 
	 and the invariants
	\[
	\mathcal A^{H}=\ker\!\bigl(\sigma-(\id\otimes 1)\bigr)=\mathcal A\cap L^{H}
	\]
	form a coherent $\cO_X$-algebra, being a kernel of coherent sheaves. Here the
	second equality holds because the target $\mathcal A\otimes_{\cO_X}\mathcal B_H$
	is torsion-free, so invariance can be checked at the generic point of~$X$.

	Write $M$ for the intermediate field $F\subseteq M\coloneqq L^{H} \subseteq L$.
	The $\cO_X$-module~$\mathcal A^{H}$, being a submodule of the finite module $\mathcal A$ over the
	locally Noetherian $X$, is itself finite\,; consequently $Z\coloneqq\Spec_X(\mathcal
	A^{H})$ is finite and locally Noetherian over $X$, and integral with function field $M$, since
	$\mathcal A^{H}$ is a subalgebra of $M$ with generic stalk
	$(\mathcal A\otimes_{\cO_X}F)^{H}=L^{H}=M$, invariants commuting with the flat passage to the
	generic point. The morphism $q\colon Z\to X$ is finite and dominant, hence surjective. As
	$\mathcal A$ is finite
	over $\mathcal A^{H}\supseteq\cO_X$, the inclusions
	$\cO_X\subseteq\mathcal A^{H}\subseteq\mathcal A$ exhibit $\pi$ as a composite of
	finite surjective morphisms $Y\xrightarrow{r}Z\xrightarrow{q}X$.

	To see that $r$ is a quasi-torsor under $H$, put $Z_U\coloneqq q^{-1}(U)$. As $\mathcal A^{H}$ is
	torsion-free over the normal~$X$ and $q$ is finite surjective, going-down gives
	$\dim\cO_{Z,z}=\dim\cO_{X,q(z)}$ for every $z\in Z$, exactly as in
	\cref{rmk:action-extends}\,; hence $\codim_Z(Z\setminus Z_U)=\codim_X(X\setminus U)\ge 2$.
	Taking
	$H$-invariants commutes with the flat restriction to~$U$, so
	$Z_U=\Spec_U\bigl((\pi_{U*}\cO_{Y_U})^{H}\bigr)=Y_U/H$. The $G$-action on $Y_U$ is
	free, hence so is that of the closed subgroup $H$, and for a $G$-torsor
	$Y_U\to U$ the quotient map $Y_U\to Y_U/H$ is an fppf $H$-torsor
		\cite[Exp.~V, Th\'eor\`eme~4.1]{SGA3}.
	As
	$r^{-1}(Z_U)=\pi^{-1}(U)=Y_U$, the restriction $Y_U\to Z_U$ of $r$ is this $H$-torsor,
	which is precisely the quasi-torsor condition.
	
	When $H$ is normal in $G$, the morphism $q$ is moreover a quasi-torsor under
	$G/H$. Indeed, the fppf quotient $G/H$ is then a finite locally free $S$-group scheme, being
	finite locally free as shown in the proof of \cref{prop:exceptional_inverse} and a group scheme
	because $H$ is normal. Moreover the quotient of the $G_U$-torsor $Y_U\to U$ by $H$ is a
	$(G/H)_U$-torsor $Y_U/H\to U$.
	As $Y_U/H=Z_U$, the morphism $q$ restricts to the
	$(G/H)$-torsor $Z_U\to U$ over the big open $U\subseteq X$, which is the
	quasi-torsor condition.

	Next, $\mathcal A^{H}$ is reflexive over $\cO_X$, so that $Z$ is $S_2$. Indeed, $j_*$ is left
	exact and $\mathcal A=j_*(\mathcal A|_U)$, so
	$\mathcal A^{H}=\ker\bigl(\sigma-(\id\otimes1)\bigr)=j_*\bigl((\mathcal A|_U)^{H}\bigr)
	=j_*\bigl(q_{U*}\cO_{Z_U}\bigr)$. Here $q_{U*}\cO_{Z_U}$ is locally free of finite rank. The
	morphism $Z_U\to U$ is finite, and it is flat because flatness descends from that of $Y_U\to U$
	along the faithfully flat $H$-torsor $Y_U\to Z_U$.
	Hence $\mathcal A^{H}$ is reflexive by
	\S\ref{sec:reflexive_sheaves}, in particular torsion-free and $S_2$ over $\cO_X$, and the module
	and scheme $S_2$ conditions coincide for the finite $q$ over the normal base
	(\cref{rmk:action-extends}). In particular $q$ is a quasi-torsor cover under $G/H$ when $H$ is
	normal in $G$, its base $X$ being integral normal and its total space $Z$ integral $S_2$.
	Finally, if $Y$ is normal, then $\mathcal A$ is the integral closure of $\cO_X$ in $L$, so
	$\mathcal A^{H}=\mathcal A\cap M$ is integrally closed in $M$\,: any $m\in M$ integral over
	$\mathcal A^{H}$ is integral over $\mathcal A$, hence lies in $\mathcal A\cap M$. Thus $Z$,
	being the normalization of $X$ in $M$, is normal. Whenever $Z$ is normal, $r$ is a quasi-torsor
	cover, its total space $Y$ being integral $S_2$.

	The assumption $H^0(X,\cO_X)=H^0(Y,\cO_Y)$ passes to $Z$ immediately\,: taking
	global sections of $\cO_X\subseteq\mathcal A^{H}\subseteq\mathcal A$ inside
	$H^0(Y,\cO_Y)=\Gamma(X,\mathcal A)$ gives
	$
	H^0(X,\cO_X) \subseteq H^0(Z,\cO_Z)=\Gamma(X,\mathcal A^{H}) \subseteq
	H^0(Y,\cO_Y),
	$
	and the two outer terms coincide.
	
	It remains to show that $\omega_{Z/X}\isom \mcO_Z$.
	Over $U$, the torsor $Y_U\to U$ identifies $[Y_U/G]\isom U$ and, as shown above, $[Y_U/H]\isom Z_U=:V$, the projection $[Y_U/H]\to[Y_U/G]$ being
	$q|_V\colon V\to U$. Applying \cref{prop:exceptional_inverse} to the $G$-action on the $S$-scheme
	$Y_U$ shows that $q|_V$ is finite locally free with
	$\omega_{V/U}\isom h^{*}\cL(\chi_{H/G})\otimes\rho^{*}\bigl(\textstyle\int_G\otimes\int_H^{-1}\bigr)$.
	Both listed sufficient conditions in \labelcref{item:lemcrepant} therefore give $\omega_{V/U}\isom\cO_V$.
	So assume
	$\omega_{V/U}\isom\cO_V$. As the formation of $\omega_{Z/X}$ commutes with
	restriction to opens of $X$, we get
	$\omega_{Z/X}|_V\isom\omega_{V/U}\isom\cO_V$.
	We now extend this isomorphism across $X\setminus U$ by reflexivity over $X$ rather than over
	$Z$. The pushforward $q_*\omega_{Z/X}=\cHom_{\cO_X}(\mathcal A^{H},\cO_X)$ is a dual sheaf,
	hence reflexive on the normal $X$, and $q_*\cO_Z=\mathcal A^{H}$
	is reflexive as shown above. These two reflexive $\cO_X$-modules are isomorphic over the big
	open~$U$, by $q_{U*}$ applied to the isomorphism $\omega_{Z/X}|_V\isom\cO_V$ of $\cO_V$-modules
	obtained above\,; in particular that isomorphism over $U$ is $\mathcal A^{H}|_U$-linear. It
	extends uniquely to an isomorphism $\varphi\colon q_*\omega_{Z/X}\isom\mathcal A^{H}$ of
	$\cO_X$-modules restricting to it over $U$ (\S\ref{sec:reflexive_sheaves}). Then $\varphi$ is
	$\mathcal A^{H}$-linear\,: the two maps
	$\mathcal A^{H}\otimes_{\cO_X}q_*\omega_{Z/X}\to\mathcal A^{H}$ sending $a\otimes m$ to
	$\varphi(am)$ and to $a\varphi(m)$ agree over $U$, and their difference is a map into the
	torsion-free $\mathcal A^{H}$ vanishing on the dense $U$, hence zero.
	Under the identification of $\Qcoh(Z)$ with quasi-coherent $\mathcal A^{H}$-modules
	(\S\ref{sec:upper_shriek}), $\varphi$ is an isomorphism $\omega_{Z/X}\isom\cO_Z$ in $\Coh(Z)$.
\end{proof}

\subsection{Lifting the splinter and global $F$-regularity properties}

\cref{lem:exceptional_inverse} supplies the crepancy $\omega_{Z/X}\isom\mcO_Z$ needed to invoke the lifting \cref{lem:lift}, giving the following ascent statement for the splinter property and for global $F$-regularity, thereby extending \cite[Thm.~E]{krah-vial}\,:

\begin{proposition}\label{prop:splinter-ascent}
Let $R$ be a ring and let $\pi\colon Y \to X$ be a quasi-torsor cover of Noetherian $R$-schemes under a finite locally free $R$-group scheme $G$.
	Let $H\subseteq G$ be a closed subgroup scheme, finite locally free
	over $R$ with trivial relative modular character $\chi_{H/G}$, for instance under any of the
	conditions of \cref{lem:chi-trivial}. 
	Assume that $H^0(X,\cO_X)=H^0(Y,\cO_Y)$
	and that one of the following two
	conditions holds\,:
	\begin{enumerate}
		\item \label{item:splinter} $X$ is a splinter\,;
		\item \label{item:gFr} $R=k$ and $X$ is a normal globally $F$-regular variety over $k$.
	\end{enumerate}
	Set $Z\coloneqq Y/H=\Spec_X\bigl((\pi_*\cO_Y)^{H}\bigr)$.
	Then $Z$ is integral normal, and the natural factorization
	\[
	\pi\colon Y\xrightarrow{\,r\,}Z\xrightarrow{\,q\,}X
	\]
	is such that $r$ is a quasi-torsor cover under $H$, and $q$ is a quasi-torsor cover under $G/H$ if $H$ is normal in $G$.
	Moreover, $Z$ is an integral splinter in case~\labelcref{item:splinter}, and an
	integral normal globally $F$-regular variety over $k$ in
	case~\labelcref{item:gFr}.
\end{proposition}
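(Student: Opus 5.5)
The plan is to combine \cref{lem:exceptional_inverse} with the lifting \cref{lem:lift}, applied to $q\colon Z\to X$ rather than to $\pi$. We first apply \cref{lem:exceptional_inverse} with $S=\Spec R$. This yields the following facts. $Z$ is integral and $S_2$. The maps $r$ and $q$ are finite surjective, and $r$ is a quasi-torsor under $H$. If $H$ is normal in $G$, then $q$ is a quasi-torsor cover under $G/H$. Finally, part \labelcref{item:lemHzero} gives $H^0(X,\cO_X)=H^0(Z,\cO_Z)$.

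The key step is the crepancy $\omega_{Z/X}\isom\cO_Z$ in $\Coh(Z)$, obtained from \cref{lem:exceptional_inverse}\labelcref{item:lemcrepant}. By hypothesis $\chi_{H/G}$ is trivial, so $h^*\cL(\chi_{H/G})\isom\cO_V$. The remaining factor in \cref{prop:exceptional_inverse} is $\rho^*(\int_G\otimes\int_H^{-1})$, pulled back from $S=\Spec R$.

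This is where I expect the main obstacle. \cref{lem:exceptional_inverse} asks for $\Pic(S)=0$, which is not assumed for an arbitrary ring $R$. I would try two routes, in this order.
\begin{enumerate}
	\item Reduce to $R$ local, or even to the image of $\Spec H^0(X,\cO_X)$. Being an invertible $R$-module, $\int_G\otimes\int_H^{-1}$ is locally trivial on $\Spec R$.
	\item Observe that \cref{lem:lift} only needs $\omega_{Z/X}\isom\cO_Z$ up to twisting by a line bundle pulled back from $X$, hence from $\Spec R$.
\end{enumerate}
If neither works, the cleanest fix is to note that $\rho^*$ factors through $X\to\Spec R$. The splitting arguments in \cref{lem:lift} are then insensitive to such a twist: locally on $\Spec R$ it is trivial, and the splinter and global $F$-regularity conditions may be checked after such a twist. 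With crepancy in hand and $H^0(X,\cO_X)=H^0(Z,\cO_Z)$, \cref{lem:lift} applies to $q$. In case \labelcref{item:splinter} it makes $Z$ a splinter, and in case \labelcref{item:gFr} a normal globally $F$-regular variety.

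Normality of $Z$ follows because splinters are normal, as recalled in \S\ref{sec:ascent}, and normal globally $F$-regular varieties are normal by definition. Once $Z$ is normal, \cref{lem:exceptional_inverse} gives that $r$ is a quasi-torsor cover under $H$: its total space $Y$ is integral $S_2$ and its base $Z$ is integral normal. The statement about $q$ under normality of $H$ was already obtained from that lemma. In case \labelcref{item:gFr}, $Z$ is a variety because it is finite over the variety $X$ and integral.
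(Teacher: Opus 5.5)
Your architecture matches the paper's: apply \cref{lem:exceptional_inverse} over $S=\Spec R$, get crepancy of $q$ from its item~\labelcref{item:lemcrepant}, apply \cref{lem:lift} to $q$, and feed the resulting normality of $Z$ back into the last clause of \cref{lem:exceptional_inverse}. The gap is the step you flag yourself. Removing $\Pic(R)=0$ is offered as a list of options, none of which is carried out, and route~(2) rests on a claim that is false as stated. In case~\labelcref{item:gFr} there is no issue at all, since $R=k$.

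\Cref{lem:lift} does not tolerate twists by line bundles pulled back from $X$. For any finite surjective $\pi\colon E\to\PP^1$ from an elliptic curve over $\bar k$, one has $\omega_{E/\PP^1}\isom\omega_E\otimes\pi^*\omega_{\PP^1}^{-1}\isom\pi^*\cO_{\PP^1}(2)$ and $H^0(E,\cO_E)=H^0(\PP^1,\cO_{\PP^1})$. Yet $\PP^1$ is a splinter while $E$ is not, as $H^1(E,\cO_E)\neq0$. Nor does ``locally on $\Spec R$ the twist is trivial'' suffice on its own. The splinter property asks for \emph{global} splittings. Passing from splittings over the $X_{R_\fm}$ to one over $X$, and knowing that each $X_{R_\fm}$ is still a splinter, is exactly what has to be proved.

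The paper closes this along your route~(1). By \cite[Lem.~7.3]{krah-vial}, $Z$ is a splinter as soon as every $Z_{R_\fm}$ with $\fm$ maximal is one, and each $X_{R_\fm}$ is again a splinter. One then checks that all hypotheses survive the flat base change $R\to R_\fm$:
local rings and codimensions are unchanged, so $\pi_{R_\fm}$ is still a quasi-torsor cover;
$H$-invariants and $H^0$ of the Noetherian $X$, $Z$ commute with flat base change, so $Z_{R_\fm}=Y_{R_\fm}/H_{R_\fm}$ and the $H^0$ equality persists;
$\chi_{H/G}$ is compatible with base change;
and $\Pic(R_\fm)=0$.

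Alternatively, your twist idea can be made rigorous without localizing, by using the splinter property of $X$ once more. Let $\rho_Z\colon Z\to\Spec R$ and $\rho_X\colon X\to\Spec R$ be the structure maps, and set $M\coloneqq\int_G\otimes\int_H^{-1}$. The reflexive-extension argument of \cref{lem:exceptional_inverse} works verbatim with a twist: $q_*\omega_{Z/X}$ and $q_*\cO_Z\otimes\rho_X^*M$ are reflexive and agree over $U$. This gives $\omega_{Z/X}\isom\rho_Z^*M$.

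Now take a splitting $\phi$ of $\cO_X\to q_*\cO_Z$. It is a global section of $\omega_{Z/X}$, hence an element of $A\otimes_RM$ with $A\coloneqq H^0(X,\cO_X)=H^0(Z,\cO_Z)$, since $M$ is finite projective. Evaluation at $1$ is an $A$-linear map $A\otimes_RM\to A$ sending $\phi$ to $1$. A surjection from an invertible $A$-module onto $A$ is an isomorphism, so $\rho_X^*M\isom\cO_X$ and hence $\omega_{Z/X}\isom\cO_Z$. Either way, this is the missing step; the rest of your proposal is fine.
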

\begin{proof}
	As $\pi$ is a quasi-torsor cover, \cref{lem:exceptional_inverse} applies over $S=\Spec R$. It
	gives that $Z$ is integral and $S_2$, and that $\pi$ factors as $Y\xrightarrow{\,r\,}Z\xrightarrow{\,q\,}X$ with
	$r$ and $q$ finite surjective, $r$ a quasi-torsor under~$H$, and $q$ a quasi-torsor cover under
	$G/H$ when $H$ is normal in $G$. Item~\labelcref{item:lemHzero} of that lemma gives moreover
	$H^0(X,\mcO_X)=H^0(Z,\mcO_Z)$. None of this uses $\Pic(R)=0$, nor the triviality of
	$\chi_{H/G}$.

	It remains to prove that $Z$ is a splinter in case~\labelcref{item:splinter}, and a normal
	globally $F$-regular variety over~$k$ in case~\labelcref{item:gFr}. The remaining assertions
	follow from this\,: $Z$ is then normal and $r$ is then a
	quasi-torsor cover by the last clause of \cref{lem:exceptional_inverse}.

	We may assume $\Pic(R)=0$. 
	Indeed, this is automatic in case~\labelcref{item:gFr} where $R=k$, and in
	case~\labelcref{item:splinter} it suffices by \cite[Lem.~7.3]{krah-vial}  to prove that
	$Z_{R_z}$ is a splinter for every closed point $z\in\Spec R$ noting  that $\Pic(R_z)=0$  since the ring $R_z$ is local. 
	Moreover, the assumptions of the proposition are preserved by the base change along $R\to R_z$.
	Being a localization, it leaves local rings and codimensions unchanged\,: the base change
	$\pi_{R_z}\colon Y_{R_z}\to X_{R_z}$ of $\pi$ is again a quasi-torsor cover under $G_{R_z}$ of
	Noetherian schemes. Being flat, it commutes with taking $H$-invariants, so
	$Z_{R_z}=Y_{R_z}/H_{R_z}$, and with $H^0$, so that, $X$ and $Z$ being Noetherian,
	$H^0(Z_{R_z},\OO)=H^0(Z,\mcO_Z)\otimes_RR_z=H^0(X,\mcO_X)\otimes_RR_z=H^0(X_{R_z},\OO)$.
	The formation of the relative modular character commutes with arbitrary base change, leaving
	$\chi_{H_{R_z}/G_{R_z}}$ trivial. Finally $X_{R_z}$ is a splinter by
	\cite[Lem.~7.3]{krah-vial}.

	So let $\Pic(R)=0$. As $\chi_{H/G}$ is trivial, so is $h^{*}\cL(\chi_{H/G})$ over the torsor
	locus. The second sufficient condition of
	\cref{lem:exceptional_inverse}\labelcref{item:lemcrepant} therefore holds, and that item gives
	the crepancy $\omega_{Z/X}\isom\mcO_Z$ in $\Coh(Z)$. As $q$ is finite surjective with
	$H^0(X,\mcO_X)=H^0(Z,\mcO_Z)$, \cref{lem:lift} yields the splinter, resp.\ globally
	$F$-regular, conclusion.
\end{proof}

\begin{remark}\label{rmk:ascent-line-bundle}
	The triviality of $\chi_{H/G}$ is more than \cref{prop:splinter-ascent} needs\,: by
	\cref{lem:exceptional_inverse}\labelcref{item:lemcrepant} it is enough that
	$\omega_{V/U}\isom\cO_V$ on $V\coloneqq q^{-1}(U)$, where $U\subseteq X$ is a big open over which $\pi$
	restricts to a $G$-torsor. This holds in particular when $\Pic(V)=0$.
\end{remark}

\begin{remark}
	The method of this section removes the Nagata assumption from \cite[Thm.~D \& Thm.~E]{krah-vial} and from
	\cite[Thm.~E]{krah-vial}, provided $H^0(X,\cO_X)=H^0(Y,\cO_Y)$.
	\cref{prop:splinter-ascent} with $H=\{e\}$ gives \cite[Thm.~E]{krah-vial}, and the argument
	sketched next gives the quasi-\'etale \cite[Thm.~D]{krah-vial}.
	The new input is that for $\pi\colon Y\to X$ a quasi-\'etale or quasi-torsor cover the
	pushforward $\pi_*\omega_{Y/X}=\cHom_{\cO_X}(\pi_*\cO_Y,\cO_X)$ is a dual sheaf, hence a
	reflexive $\cO_X$-module, as is $\pi_*\cO_Y$. Two reflexive $\cO_X$-modules agreeing over a big
	open coincide, so the triviality of $\omega_{Y/X}$ over the
	\'etale locus of a quasi-\'etale cover yields $\omega_{Y/X}\isom\mcO_Y$ in $\Coh(Y)$, exactly as
	in the proof of \cref{lem:exceptional_inverse}, and \cref{lem:lift} applies.
\end{remark}

\section{Quasi-torsor covers of proper $F$-split varieties}
\label{sec:F-split}

The following lemma, which is certainly well-known, will be used in the case $U=\alpha_p$.
\begin{lemma}
	\label{lem:nounipotent}
	Let $X$ be a connected, proper, normal variety over $\bar{k}$, and assume that $X$ is $F$-split. Then for every big open subscheme $V\subseteq X$ and every finite infinitesimal unipotent $\bar k$-group scheme~$U$, the pointed set $H^1_{\mathrm{fppf}}(V,U)$ is trivial. In particular, if $U$ is nontrivial, then $X^{\reg}$ carries no cover that is a torsor under $U$ and $X$ carries no quasi-torsor cover under $U$.
\end{lemma}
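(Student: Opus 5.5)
The plan is to prove $H^1_{\mathrm{fppf}}(V,\alpha_p)=0$ directly, and then to reach a general infinitesimal unipotent $U$ by dévissage along a central series with factors $\alpha_p$.

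\emph{Step 1: the case $U=\alpha_p$.} Use the Frobenius exact sequence of fppf sheaves $0\to\alpha_p\to\mathbb G_a\xrightarrow{F}\mathbb G_a\to0$ on $V$. Since $H^i_{\mathrm{fppf}}(V,\mathbb G_a)=H^i(V,\cO_V)$, it gives an exact sequence
\[
0\to H^0(V,\cO_V)/F\,H^0(V,\cO_V)\to H^1_{\mathrm{fppf}}(V,\alpha_p)\to\ker\bigl(F\colon H^1(V,\cO_V)\to H^1(V,\cO_V)\bigr).
\]
\begin{itemize}
\item The left term vanishes. As $X$ is normal and $V$ is big, $H^0(V,\cO_V)=H^0(X,\cO_X)$, and this equals $\bar k$ because $X$ is connected, proper and normal. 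Frobenius is surjective on the perfect field $\bar k$.
\item The right term vanishes. The Frobenius on $H^1(V,\cO_V)$ is the map induced on cohomology by $\cO_V\to F_*\cO_V$, where $F$ is affine so $H^1(V,F_*\cO_V)=H^1(V,\cO_V)$. An $F$-splitting of $X$ restricts to an $F$-splitting of $V$, so this map is split injective. This holds even though $H^1(V,\cO_V)$ need not be finite-dimensional.
\end{itemize}
Hence $H^1_{\mathrm{fppf}}(V,\alpha_p)=0$.

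\emph{Step 2: dévissage.} Argue by induction on $\ord U$. A nontrivial unipotent $\bar k$-group scheme has nontrivial unipotent centre. If $U$ is infinitesimal, the Frobenius kernel of that centre is a nontrivial commutative infinitesimal unipotent group. Its subgroup killed by both $F$ and $V$ is then nontrivial and, over $\bar k$, a power of $\alpha_p$. This gives a central subgroup $\alpha_p\subseteq U$; it is normal, so $U'\coloneqq U/\alpha_p$ is again infinitesimal unipotent, of smaller order. For the central extension $1\to\alpha_p\to U\to U'\to1$, nonabelian cohomology gives an exact sequence of pointed sets
\[
H^1(V,\alpha_p)\to H^1(V,U)\to H^1(V,U').
\]
By induction the right-hand set is trivial. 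Exactness says the fibre over the base point is the image of $H^1(V,\alpha_p)$, which is trivial by Step 1. Hence $H^1(V,U)$ is trivial.

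I expect the main obstacle to be justifying cleanly the existence of a central $\alpha_p$. If necessary, I would fall back on the standard fact (Demazure--Gabriel) that infinitesimal unipotent groups admit a central composition series with all factors $\alpha_p$.

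\emph{Step 3: the consequences.} Take $V=X^{\reg}$, which is big because $X$ is normal. Any $U$-torsor over $X^{\reg}$ is then trivial, i.e.\ isomorphic to $U\times X^{\reg}$. If $U$ is nontrivial and infinitesimal, this scheme is nonreduced, so it is not integral and cannot be a cover. Finally, a quasi-torsor cover under $U$ over $X$ would correspond, by \cref{prop:quasitorsor-correspondence}, to a $U$-torsor over $X^{\reg}$ with integral total space. We have just excluded such a torsor, so no quasi-torsor cover under $U$ exists.
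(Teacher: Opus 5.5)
Your proposal is correct and follows essentially the same route as the paper. The case $\alpha_p$ uses the Frobenius sequence, the $F$-splitting and $H^0(V,\OO_V)=\bar k$, and the general case inducts on $\ord U$ along a central $\alpha_p$, for which the paper simply cites Demazure--Gabriel's central composition series (your fallback); the only minor deviation is at the end, where you invoke \cref{prop:quasitorsor-correspondence} while the paper argues directly that the quasi-torsor cover is trivial, hence nonreduced, over its torsor locus, contradicting integrality of $Y$.
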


\begin{proof}
	We first treat the case $U=\alpha_p$.
	The splitting $F_*\OO_X\to\OO_X$ restricts to the open subscheme~$V$, which is therefore $F$-split\,; since a splitting is a left inverse to Frobenius, $F$ acts injectively on every $H^i(V,\OO_V)$, and in particular $\ker\!\big(F\mid H^1(V,\OO_V)\big)=0$. Because $X$ is normal, hence $S_2$, and $X\setminus V$ has codimension $\ge2$, restriction $H^0(X,\OO_X)\to H^0(V,\OO_V)$ is an isomorphism \cite[\href{https://stacks.math.columbia.edu/tag/0E9I}{Tag 0E9I}]{stacks-project}\,; and $X$ being proper and integral over the algebraically closed field $\bar k$, this common ring is $\bar k$, on which $F\colon a\mapsto a^p$ is bijective, so $\operatorname{coker}\!\big(F\mid H^0(V,\OO_V)\big)=0$. Using $H^i_{\mathrm{fppf}}(V,\mathbb G_a)=H^i(V,\OO_V)$ \cite[\href{https://stacks.math.columbia.edu/tag/03P2}{Tag 03P2}]{stacks-project},
	and the exactness of the Frobenius sequence $0\to\alpha_p\to\mathbb G_a\xrightarrow{F}\mathbb G_a\to0$ of fppf sheaves (note that the map induced by $F\colon\mathbb G_a\to\mathbb G_a$ on $H^i(V,\OO_V)$ is precisely the $p$-th power Frobenius action considered above), we obtain
	\[
	0\longrightarrow\operatorname{coker}\!\big(F\mid H^0(V,\OO_V)\big)\longrightarrow H^1_{\mathrm{fppf}}(V,\alpha_p)\longrightarrow\ker\!\big(F\mid H^1(V,\OO_V)\big)\longrightarrow0,
	\]
	whose outer terms both vanish. Hence $H^1_{\mathrm{fppf}}(V,\alpha_p)=0$.

	In the general case, we proceed by d\'evissage and induct on $\ord U$, the case $\ord U=1$ being
	trivial. Suppose $U$ nontrivial.
	By \cite[IV, \S2, Prop.~2.5\,(vii)]{demazure_gabriel_groupes_algebriques} (see also \cite[IV, \S2, Cor.~2.9]{demazure_gabriel_groupes_algebriques}, over a separably closed field), $U$ admits a central composition series whose successive quotients are isomorphic to $\mathbb G_a$, to $\alpha_p$, or to finite \'etale subgroup schemes of $\mathbb G_a$\,; since all subquotients of the finite infinitesimal $U$ are finite and infinitesimal, these quotients are all isomorphic to $\alpha_p$. Its last nontrivial term is therefore a subgroup scheme $N\isom\alpha_p$, central in $U$ and in particular normal, giving a short exact sequence
	\[
	1\longrightarrow N\xrightarrow{\ \iota\ }U\xrightarrow{\ \pi\ }Q\longrightarrow1,\qquad Q\coloneqq U/N,
	\]
	with $Q$ finite infinitesimal unipotent and $\ord Q=\ord U/p<\ord U$.
	By exactness of the induced sequence of pointed sets 
	\[
	H^1_{\mathrm{fppf}}(V,N)\xrightarrow{\ \iota_*\ }H^1_{\mathrm{fppf}}(V,U)\xrightarrow{\ \pi_*\ }H^1_{\mathrm{fppf}}(V,Q)
	\]
	and by the inductive hypothesis that $H^1_{\mathrm{fppf}}(V,Q)$ is trivial, every class of $H^1_{\mathrm{fppf}}(V,U)$ lies in $\operatorname{im}(\iota_*)$\,; and $N\isom\alpha_p$ gives $H^1_{\mathrm{fppf}}(V,N)=0$. Therefore $H^1_{\mathrm{fppf}}(V,U)$ is trivial.

	Finally, suppose $U$ is nontrivial. Any torsor under $U$ over $V$ is then
	trivial, isomorphic to $V\times_{\bar k} U$, whose total space is nonreduced since
	$U$ is infinitesimal and nontrivial\,; in particular it is not integral, and so
	is not a cover in the sense of \cref{def:quasitorsor}. Likewise, a
	quasi-torsor cover $f\colon Y\to X$ under~$U$ restricts to a $U$-torsor
	over some big open $V\subseteq X$\,; this torsor is trivial by the above, so
	the nonempty open $Y|_{V}\isom V\times_{\bar k} U$ of $Y$ is nonreduced,
	contradicting the integrality of~$Y$.
\end{proof}

\begin{remark}\label{rmk:nounipotent-sharp}
	The class of finite infinitesimal unipotent group schemes is sharp\,: an
	ordinary elliptic curve over $\bar k$ is $F$-split, yet carries nontrivial \'etale
	$\ZZ/p$-covers (\'etale unipotent) and nontrivial $\mu_p$-torsors
	(infinitesimal multiplicative). Excluding $\ZZ/p$-covers uses the
	splinter property (\cref{lem:Zp}), while $\mu_p$-torsors survive even global
	$F$-regularity, as the example in the introduction shows.
\end{remark}

\section{Quasi-torsor covers of proper splinters}
\label{sec:splinters}

For proper splinters, \cref{lem:nounipotent} is complemented by 

\begin{lemma}\label{lem:Zp}
	Let $W$ be a connected proper variety over $\bar{k}$, and assume that $W$ is a splinter.
	Then $H^1_{\mathrm{\acute et}}(U,\ZZ/p)=0$ for every big open subscheme $U\subseteq W$. In
	particular, no big open of $W$ carries a nontrivial connected $\ZZ/p$-cover, and $W$ carries no
	quasi-torsor cover under~$\ZZ/p$.
\end{lemma}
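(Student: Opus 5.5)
Fix a big open $U\subseteq W$; the plan is to reduce to $H^1(U,\cO_U)$ via Artin--Schreier. The étale sequence $0\to\ZZ/p\to\mathbb G_a\xrightarrow{F-1}\mathbb G_a\to0$ on $U$, together with $H^i_{\mathrm{\acute et}}(U,\mathbb G_a)=H^i(U,\cO_U)$, gives an exact sequence
\[
H^0(U,\cO_U)\xrightarrow{F-1}H^0(U,\cO_U)\longrightarrow H^1_{\mathrm{\acute et}}(U,\ZZ/p)\longrightarrow H^1(U,\cO_U)\xrightarrow{F-1}H^1(U,\cO_U).
\]
Since $W$ is a splinter, it is normal, so $H^0(U,\cO_U)=H^0(W,\cO_W)=\bar k$ exactly as in the proof of \cref{lem:nounipotent}. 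On $\bar k$ the map $a\mapsto a^p-a$ is surjective, so the first cokernel vanishes. It therefore suffices to show that $F-1$ is injective on $H^1(U,\cO_U)$; for this I will show $H^1(U,\cO_U)=0$.

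Let $Z=W\setminus U$, of codimension $\ge2$. The local cohomology sequence reads
\[
H^1_Z(W,\cO_W)\to H^1(W,\cO_W)\to H^1(U,\cO_U)\to H^2_Z(W,\cO_W)\to H^2(W,\cO_W).
\]
By the vanishing of positive-degree cohomology of the structure sheaf of a proper splinter (\cite{bhatt_derived_splinters_in_positive_characteristic}, \cite[Prop.~3.6]{krah-vial}), $H^1(W,\cO_W)=0$. Hence $H^1(U,\cO_U)\cong H^2_Z(W,\cO_W)$, and the task is to show $H^2_Z(W,\cO_W)=0$. Since splinters of finite type over $\bar k$ are Cohen--Macaulay, every local ring $\cO_{W,z}$ with $z\in Z$ has depth $\dim\cO_{W,z}\ge2$. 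This gives the vanishing of $\mathcal H^i_Z(\cO_W)$ for $i<2$, but not automatically for $i=2$ when some point of $Z$ has codimension exactly $2$. This is the main obstacle.

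To get past it, I plan to avoid the full group $H^1(U,\cO_U)$ and argue through Frobenius, using $F$-rationality. The key observation is that a class $\xi$ with $F(\xi)=\xi$ lying in $H^2_Z(W,\cO_W)$ generates an $F$-stable submodule, since $F$ is compatible with the local cohomology sequence. Concretely, I would work with the stalk $\mathcal H^2_Z(\cO_W)$ at a generic point $z$ of a codimension-$2$ component of $Z$. There it is the top local cohomology $H^2_{\fm}(\cO_{W,z})$ of a $2$-dimensional $F$-rational ring. For such a ring, the $F$-stable submodules of $H^d_{\fm}$ are $0$ or everything, and there is no nonzero finite-length $F$-stable submodule; a Frobenius-fixed class spans a $\bar k$-finite, hence finite-length, stable submodule, so it must vanish. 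I would then globalize by a filtration of $Z$ by codimension, or equivalently by showing that the finite-dimensional space $\{\xi: F\xi=\xi\}$ injects into stalks.

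An alternative that I would try first, since it is cleaner, is to use the splinter property directly. A $\ZZ/p$-torsor $V\to U$ with integral total space extends, by \cref{prop:quasitorsor-correspondence}-style normalization, to a finite cover $Y\to W$. The splinter splitting $\cO_W\to\pi_*\cO_Y$ then has trace-type behavior, and by \cref{prop:splinter-ascent} with $H=\{e\}$, where crepancy holds because $\omega$ is trivial on the étale torsor locus, $Y$ is again a proper splinter. Then $\chi(\cO_Y)=p\,\chi(\cO_W)$ by \cite[Lem.~8.1]{krah-vial}, which extends over the big open via reflexivity; combined with the vanishing of higher cohomology of both structure sheaves, this gives $1=p$, a contradiction.

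The delicate point in this route is the Euler characteristic multiplicativity for a quasi-torsor, which is only a torsor off codimension $2$. I would try to justify it via the isomorphism $\pi_*\cO_Y\cong\cO_W^{\oplus p}$ up to reflexive twisting, or via the Artin--Schreier extension being étale-locally trivial. If that fails, I fall back on the local cohomology argument above.

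Finally, the "in particular" statements follow. A connected $\ZZ/p$-cover is a nontrivial torsor, and a quasi-torsor cover restricts to a torsor on a big open with integral total space, which is impossible once the torsor is trivial.
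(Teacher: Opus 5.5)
Your fallback route is essentially the paper's proof. You were right to drop the attempt to prove $H^1(U,\cO_U)=0$, which is false already for $\PP^2$ minus a point. Two points in that route need repair.

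\emph{Finite length.} The module $\cO_{W,z}s_z$ has finite length because $s_z$ is killed by a power of $\fm_z$. It is not because anything is finite over $\bar k$: the residue field $\kappa(z)$ is transcendental over $\bar k$ once $\dim W>2$.

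\emph{Globalization.} This should be done as in the paper, and it needs no finite-dimensionality of the fixed space. The support $T$ of $s\in\Gamma(W,\mathcal H^2_Z(\cO_W))$ is closed, being the support of the coherent image of $\cO_W\xrightarrow{\cdot s}\mathcal H^2_Z(\cO_W)$. It contains no codimension-$2$ point, so $\codim_W T\ge 3$. Since $W$ is Cohen--Macaulay, $\operatorname{depth}_T\cO_W\ge3$, so $H^2_T(W,\cO_W)=0$. In the exact sequence $H^2_T(W,\cO_W)\to H^2_Z(W,\cO_W)\to H^2_{Z\setminus T}(W\setminus T,\cO)$ the class $s$ maps to zero, so $s=0$. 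Hence $\xi=0$, because $H^1(U,\cO_U)\hookrightarrow H^2_Z(W,\cO_W)$.

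The route you say you would try first does not work, and the problem is not a technicality. Its first two steps apply verbatim to any quasi-\'etale cover of degree $n$: extending to a quasi-torsor cover $Y\to W$, and getting $Y$ a proper splinter by \cref{prop:splinter-ascent}. After that, $\chi(\cO_Y)=\chi(\cO_W)=1$. So multiplicativity of $\chi$ simply fails along quasi-torsor covers. Examples are $\PP^2\to\PP^2/(\ZZ/3)$ with $\ZZ/3$ acting with weights $(0,1,2)$ in characteristic $\neq3$, or the $\mu_p$-example of the introduction; in both, the two sides have $\chi=1$.

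For $\ZZ/p$, multiplicativity is literally equivalent to the lemma. It can therefore only come from an input specific to wild covers, and that input is exactly the local one. Write $j\colon U\hookrightarrow W$ and push forward the first step $0\to\cO_U\to\mathcal F_1\to\cO_U\to0$ of the Artin--Schreier filtration of $\pi_*\cO_V$. This gives
\[
0\to\cO_W\to j_*\mathcal F_1\to\cO_W\xrightarrow{\ \delta\ }R^1j_*\cO_U\isom\mathcal H^2_Z(\cO_W),
\]
with $\delta(1)$ equal, up to sign, to the image $s$ of $\xi$. So the reflexive extension $\pi_*\cO_Y$ has the expected filtration with graded pieces $\cO_W$ only if $s=0$. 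Neither ``reflexive twisting'' nor \'etale-local triviality, which holds only over $U$, supplies this. Drop this route and keep the local-cohomology argument as the proof.
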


\begin{proof}
Write $Z\coloneqq W\setminus U$, of codimension $\ge2$.
All cohomology groups below carry the $p$-linear Frobenius action induced by the
$p$-th power endomorphism of $\OO_W$, and all maps in sight are equivariant for
it. Since $W$ is integral, proper over $\bar k$, and $\OO_W$ is $S_2$, we have
$H^0(U,\OO_U)=H^0(W,\OO_W)=\bar k$, on which $F-1$ is surjective\,; the
Artin--Schreier sequence therefore identifies
\[
H^1_{\mathrm{\acute et}}(U,\ZZ/p)\;\isom\;
\ker\bigl(F-1\colon H^1(U,\OO_U)\to H^1(U,\OO_U)\bigr),
\]
so it suffices to show that $F$ admits no nonzero fixed vector on
$H^1(U,\OO_U)$.

Since $W$ is normal, hence $S_2$, and $Z$ has codimension $\ge2$, it follows that
$\operatorname{depth}_Z\OO_W\ge2$, so $\mathcal H^0_Z(\OO_W)=\mathcal
H^1_Z(\OO_W)=0$ by the depth-sensitivity of local cohomology
\cite[Exp.~III, Th.~3.3]{SGA2}, and the local-to-global spectral sequence (whose rows
$q\le1$ vanish) identifies $H^2_Z(W,\OO_W)=\Gamma\bigl(W,\mathcal
H^2_Z(\OO_W)\bigr)$. Moreover $H^1(W,\OO_W)=0$
\cite[Prop.~3.6]{krah-vial},
so the local cohomology sequence embeds $H^1(U,\OO_U)$ into
$\Gamma\bigl(W,\mathcal H^2_Z(\OO_W)\bigr)$.

Let $\xi\in H^1(U,\OO_U)$ satisfy $F\xi=\xi$, and let $s$ denote its image in
$\Gamma\bigl(W,\mathcal H^2_Z(\OO_W)\bigr)$. Let $z\in Z$ have codimension $2$\,;
as $Z$ has codimension $\ge2$, $z$ is a generic point of $Z$, so
$Z\cap\Spec\OO_{W,z}=\{\mathfrak m_z\}$ and the stalk of $\mathcal H^2_Z(\OO_W)$
at $z$ is $H^2_{\mathfrak m_z}(\OO_{W,z})$, the top local cohomology of the
two-dimensional local ring $\OO_{W,z}$. That ring is $F$-rational (\S\ref{sec:ascent}).
By Smith's characterization of $F$-rationality
\cite[Thm.~2.6]{smith_f_rational_rings_have_rational_singularities},
the module $H^2_{\mathfrak
	m_z}(\OO_{W,z})$ admits no proper nonzero submodule stable under the natural
Frobenius action. 
If $s_z\ne0$, then $\OO_{W,z}\,s_z$ is such a submodule\,: it is
Frobenius-stable because $F(rs_z)=r^pF(s_z)=r^p s_z$, it is nonzero, and it has finite
	length since $s_z$ is annihilated by a power of $\mathfrak m_z$, whereas
$H^2_{\mathfrak m_z}(\OO_{W,z})$ has infinite length. Indeed $\OO_{W,z}$ is
Cohen--Macaulay (\S\ref{sec:ascent}), so a system of parameters $x,y$ is a regular sequence
and $H^2_{\mathfrak m_z}(\OO_{W,z})=\varinjlim_t\OO_{W,z}/(x^t,y^t)$, the transition maps
being multiplication by $xy$. These maps are injective, and $\OO_{W,z}/(x^t,y^t)$ has length
$t^2$ times that of $\OO_{W,z}/(x,y)$, so the lengths are unbounded. Therefore $s_z=0$ at
every codimension-$2$ point $z$ of $Z$.

Consequently $T\coloneqq\operatorname{Supp}(s)$ is a closed subset of $Z$ containing no
point of codimension $2$, so $\codim_W T\ge3$. Closedness holds because the image of
$\OO_W\xrightarrow{\,\cdot s\,}\mathcal H^2_Z(\OO_W)$ is a quasi-coherent subsheaf of finite type,
hence coherent, and $T$ is its support.
In the exact
sequence of supports
\[
H^2_T(W,\OO_W)\longrightarrow H^2_Z(W,\OO_W)\longrightarrow
H^2_{Z\setminus T}(W\setminus T,\OO_{W\setminus T}),
\]
the image of $s$ on the right vanishes\,: the same identification with sections of
$\mathcal H^2_Z(\OO_W)$ holds over $W\setminus T$, where the map becomes
restriction of sections and $s$ is supported on $T$. The left-hand term vanishes
because $\operatorname{depth}_T\OO_W\ge3$\,: the local rings of the splinter $W$
are Cohen--Macaulay \cite[Prop.~3.5]{krah-vial} and $T$ has codimension $\ge3$,
so \cite[Exp.~III, Th.~3.3]{SGA2} applies again. Hence $s=0$, so $\xi=0$, proving the vanishing. Every $\ZZ/p$-torsor over
$U$ is then trivial, hence disconnected, so $U$ carries no
nontrivial connected $\ZZ/p$-cover.
Finally, suppose $\pi\colon Y\to W$ is a quasi-torsor cover under $\ZZ/p$.
By \cref{prop:quasitorsor-correspondence}, its restriction to $W^{\reg}$ is a
$\ZZ/p$-torsor, trivial by the vanishing just proved\,; its total space
$Y|_{W^{\reg}}$ then has $p$ connected components. But $Y|_{W^{\reg}}$ is a
nonempty open subscheme of the integral $Y$, hence connected, a contradiction.
So no such cover exists.
\end{proof}

The following lemma follows in the commutative case from the decomposition theorem \cite[IV, \S3, Thm.~1.1]{demazure_gabriel_groupes_algebriques} 
combined with the fact that a nontrivial finite infinitesimal unipotent group scheme contains a copy of $\alpha_p$ 
\cite[IV, \S2, Prop.~2.5]{demazure_gabriel_groupes_algebriques}.
The non-commutative case can be found for instance in \cite[Lem.~2.3]{liedtke_martin_matsumoto_arXiv}\,; we provide a different proof.

\begin{lemma}\label{lem:group_mult}
	Let $G$ be a finite group scheme over $k$  and let $G^\circ$ denote its neutral component. 
	Then $G^{\circ}$ is of multiplicative type if and only if
	$G_{\bar{k}}$ contains no subgroup scheme isomorphic to $\alpha_p$.
\end{lemma}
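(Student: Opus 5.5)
First reduce to $k=\bar k$. The neutral component commutes with base change, $(G^\circ)_{\bar k}=(G_{\bar k})^\circ$, and being of multiplicative type is a geometric property, so both conditions of the statement may be checked after base change. So assume $k=\bar k$. Then $G^\circ$ is infinitesimal, and any copy of $\alpha_p$ in $G$ is connected, hence lies in $G^\circ$. The statement therefore becomes: an infinitesimal group scheme $I$ over $\bar k$ is of multiplicative type, i.e.\ diagonalizable, i.e.\ isomorphic to $\prod_i\mu_{p^{n_i}}$, if and only if $I$ contains no subgroup isomorphic to $\alpha_p$.

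The forward direction is easy. A subgroup of a group of multiplicative type is again of multiplicative type, and $\alpha_p$ is unipotent and nontrivial, so it is not of multiplicative type. Alternatively, $\Hom(\alpha_p,\mu_{p^n})=0$, while a diagonalizable group is determined by its characters.

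For the converse, take an infinitesimal $I$ with no $\alpha_p$ and induct on $\ord I$. The plan is to study the Frobenius kernel $I_1=\ker F_I$, a height-one group corresponding to the restricted Lie algebra $\mathfrak g=\operatorname{Lie}(I)$ with $p$-map $x\mapsto x^{[p]}$. A subgroup $\alpha_p\subseteq I_1$ corresponds to a nonzero $x$ with $x^{[p]}=0$. So the hypothesis says the $p$-map has no nonzero nilpotent kernel vectors, hence no nonzero $p$-nilpotent elements at all: if $x^{[p]^m}=0$ with $m$ minimal, then $x^{[p]^{m-1}}$ is a nonzero $p$-nilpotent vector in $\ker[p]$. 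By the Jacobson--Seligman theory of restricted Lie algebras, such a $\mathfrak g$ is a torus, i.e.\ abelian with $[p]$ bijective, semisimple, and spanned by toral elements $t^{[p]}=t$ over $\bar k$. Here the key point is that an element $x$ generates the restricted subalgebra spanned by the $x^{[p]^i}$, which is abelian and splits as a toral part plus a $p$-nilpotent part (Jordan decomposition). So every element is semisimple, and a restricted Lie algebra all of whose elements are semisimple is abelian, hence toral. Thus $I_1\cong\mu_p^r$.

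Then I would show that $I$ itself is of multiplicative type. Two routes are available.

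\emph{Using the group structure.} $I$ is infinitesimal with $I_1$ of multiplicative type. The quotient $I/I_1$ embeds via Frobenius into $I^{(p)}$, and $I^{(p)}$ still contains no $\alpha_p$ (Frobenius twist over perfect $\bar k$ is an autoequivalence). So by induction $I/I_1$ is of multiplicative type. It then remains to see that an extension of a multiplicative-type group by a multiplicative-type group, both infinitesimal, with no $\alpha_p$ inside, is of multiplicative type. I would use that $I_1$ is normal and of multiplicative type, hence central in the connected $I$ by rigidity. So $I$ is a central extension, and commutativity needs an argument: the commutator map $I/I_1\times I/I_1\to I_1$ is bimultiplicative, and $\Hom(\mu_{p^a}\otimes\mu_{p^b},\mu_p)$-type pairings could be nonzero. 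This is where I expect the main obstacle.

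\emph{Avoiding commutators.} I would first try to bypass the commutator issue by invoking the structure theorem that an infinitesimal group scheme whose Frobenius kernel is of multiplicative type is of multiplicative type. Alternatively, apply the commutative decomposition $I=I_m\times I_u$ once commutativity of $I$ is known, and note that a nontrivial $I_u$ contains $\alpha_p$ by the cited Demazure--Gabriel Proposition IV.2.5.

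If the structure-theorem approach fails, the fallback for commutativity is to take a minimal counterexample $I$. Any proper subgroup is then of multiplicative type, hence commutative, so $I$ is a central extension of $\mu_p$-type pieces. One then shows directly that the corresponding Lie algebra or Dieudonné-type data force abelianness, using that $\mathfrak g$ is toral, hence abelian, together with the fact that commutators of infinitesimal groups of height $>1$ are detected at height one.
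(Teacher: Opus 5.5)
Up to the last step your argument is the paper's: reduction to $\bar k$, necessity, the restricted Lie algebra being a torus so that $I_1\isom\mu_p^{\,r}$, centrality of $I_1$ by rigidity, and induction showing that $\bar I\coloneqq I/I_1$ is of multiplicative type. Your embedding $I/I_1\hookrightarrow I^{(p)}$ via relative Frobenius is a clean way to see that $\bar I$ contains no $\alpha_p$. But you stop exactly at the step that finishes the proof. You never show that the commutator pairing $\bar I\times\bar I\to I_1$ vanishes, and you even suggest that it might not.

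Neither fallback closes this. The ``structure theorem'' that an infinitesimal group with Frobenius kernel of multiplicative type is of multiplicative type is not a bypass. Every $\alpha_p\subseteq I$ has height one, hence lies in $I_1$. So that theorem is just a restatement of what remains to be proved. The principle that commutativity of infinitesimal groups is detected at height one is false in general. In $\alpha_{p^2}\rtimes\mu_{p^2}$, with $t$ acting by $x\mapsto t^px$, the Frobenius kernel is the commutative $\alpha_p\times\mu_p$, yet the group is not commutative.

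The missing idea is short. Since $I_1$ is central and $\bar I$ is commutative, the commutator map is bimultiplicative. It is therefore a homomorphism $\bar I\to\underline{\Hom}(\bar I,I_1)$. Write $\bar I=D(\bar\Gamma)$ and $I_1=D(\Gamma_1)$. Cartier duality identifies $\underline{\Hom}(\bar I,I_1)$ with the constant group on $\Hom(\Gamma_1,\bar\Gamma)$, which is \'etale. A homomorphism from the connected $\bar I$ to an \'etale group is trivial, so the pairing vanishes and $I$ is commutative. In particular your worry is unfounded: $\underline{\Hom}(\mu_{p^b},\mu_p)\isom\ZZ/p$ is constant, and $\mu_{p^a}$ admits no nonzero homomorphism to it.

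Once commutativity is known, either of your finishes works. The paper dualizes the central extension $1\to I_1\to I\to\bar I\to1$ to an extension of \'etale groups, which is again \'etale, so $I$ is of multiplicative type. Alternatively, write $I=I_m\times I_u$ and note that a nontrivial $I_u$ would contain $\alpha_p$.
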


\begin{proof}
	Since a group scheme over $k$ is of multiplicative type if and only if its base change to $\bar{k}$ is, and since $(G^{\circ})_{\bar k}=(G_{\bar k})^{\circ}$, the neutral component being geometrically connected, we may and do assume that $k=\bar{k}$.
	A group of multiplicative type has no nontrivial unipotent subgroup, and any
	copy of $\alpha_p$ in $G$ lies in $G^{\circ}$, as $\alpha_p$ is connected\,; so
	the condition is necessary. For sufficiency, note that $\underline{\Hom}(\alpha_p,G)
	=\underline{\Hom}(\alpha_p,G^{\circ})$ as $\alpha_p$ is connected, and this
	represents the kernel of the $p$-operation on $\mathfrak g\coloneqq\mathrm{Lie}(G^{\circ})$
	\cite[Exp.~VII$_{\mathrm A}$, 7.2]{SGA3}. A nonzero homomorphism $\alpha_p\to G$ has kernel a
	proper subgroup scheme of $\alpha_p$, necessarily trivial as $\ord\alpha_p=p$, so it is a closed
	immersion\,; hence ``no $\alpha_p$'' is equivalent to
	$\underline{\Hom}(\alpha_p,G)(k)=0$, that is, to the statement that
	$[p]\colon\mathfrak g\to\mathfrak g$ vanishes only at $0\in\mathfrak g(k)$ (note that $[p]$ need
	not be additive before commutativity of $\mathfrak g$ is established). Then $\mathfrak g$ has no
	nonzero $p$-nilpotent element, so by the Jordan decomposition every element is
	semisimple. Every $\mathrm{ad}(x)$ is then a semisimple operator\,: for $x$ semisimple one has
	$x=\sum_{i\ge1}\lambda_ix^{[p^i]}$, so $A\coloneqq\mathrm{ad}(x)$ satisfies $A=Q(A)$ with
	$Q(T)=\sum_i\lambda_iT^{p^i}$ additive\,; writing $A=A_s+A_n$ for the additive Jordan
	decomposition and using its uniqueness gives $A_n=Q(A_n)$, hence $A_n=0$ as $Q$ raises the order
	of nilpotency. Were $\mathfrak g$
	non-abelian, some $\mathrm{ad}(x)$ would have an eigenvector $y$ with nonzero eigenvalue,
	and $\mathrm{ad}(y)$ would act on $\langle x,y\rangle$ both nilpotently and semisimply,
	hence trivially, contradicting $[y,x]\neq 0$. Thus $\mathfrak g$ is abelian
	with all elements semisimple. It is then toral over $k=\bar k$, i.e.\ admits
	a basis of elements with $e_i^{[p]}=e_i$\,: the $p$-semilinear $[p]$ is
	injective, hence bijective on the finite-dimensional $\mathfrak g$, and the
	fixed points of a bijective $p$-semilinear endomorphism of a
	finite-dimensional vector space over $k=\bar k$ form an $\mathbb F_p$-form.
	Its
	height-one group $G_1\coloneqq\ker(F_{G^{\circ}})$ is therefore $\mu_p^{\,n}$, with $n\coloneqq\dim\mathfrak g$
	\cite[Exp.~VII$_{\mathrm A}$, 6.4.1 and 7.4]{SGA3}.
	Since $G^{\circ}$ is connected and $\underline{\Aut}(\mu_p^{\,n})$ is
	\'etale \cite[Exp.~VIII, 1.5--1.6]{SGA3II}, we find that $G_1$ is central in
	$G^\circ$.
	
	We induct on the order $\ord G^{\circ}$, the case $G^{\circ}=G_1$ being
	settled\,; the order drops at each step, since $G_1$ is nontrivial whenever
	$G^{\circ}$ is, a nontrivial connected finite group scheme having nontrivial
	Frobenius kernel. The quotient $\bar G\coloneqq G^{\circ}/G_1$ contains no $\alpha_p$\,: a copy with
	preimage $H\supseteq G_1$ would satisfy $\ker(F_H)=G_1$ and
	$\operatorname{im}(F_H)=H/G_1\isom\alpha_p$, hence
	$\alpha_p=\alpha_p^{(1/p)}\hookrightarrow(H^{(p)})^{(1/p)}=H\subseteq G^{\circ}$,
	a contradiction. By induction $\bar G$ is of multiplicative type, in particular
	commutative, so $[G^{\circ},G^{\circ}]\subseteq G_1$ and the commutator is a
	biadditive pairing $\bar G\times\bar G\to G_1$, i.e.\ a homomorphism
	$\bar G\to\underline{\Hom}(\bar G,G_1)$. The target is \'etale (as $\bar G$ is of
	multiplicative type, cf.\ \cite[Exp.~VIII, 1.5--1.6]{SGA3II}) and $\bar G$ is connected, so the pairing vanishes and
	$G^{\circ}$ is commutative. Dualizing the central extension
	$1\to G_1\to G^{\circ}\to\bar G\to 1$ yields an extension of the \'etale group
	schemes $\bar G^{\vee}$ and $G_1^{\vee}$, which is again \'etale. 
	Hence $(G^{\circ})^{\vee}$ is \'etale and $G^{\circ}$ is of multiplicative type.
\end{proof}

    A finite group scheme $G$ over $k$ is said to be \emph{linearly reductive} if every finite-dimensional representation of $G$ over $k$ is a sum of irreducible representations. It is a fact that $G$ is linearly reductive if and only if $G_{\bar k}$ is linearly reductive over $\bar{k}$.
	Nagata's theorem states that a finite group scheme $G$ over $k$ is linearly reductive if and only if its neutral component $G^\circ$ is of multiplicative type and its \'etale part $\pi_0(G)\coloneqq G/G^\circ$ has order prime to $p$.
	Using \cref{prop:splinter-ascent}\labelcref{item:splinter}, we may now combine \cref{lem:nounipotent} with \cref{lem:Zp} to obtain\,:

\begin{theorem}[Quasi-torsor covers of proper splinters are tame]\label{thm:multcomp}
	Let $X$ be a connected proper splinter over~$k$, and let $\pi\colon Y\to X$ be a quasi-torsor cover
	under a finite $k$-group scheme~$G$ with $H^0(X,\mcO_X)=H^0(Y,\mcO_Y)$.
	Then $G$ is linearly reductive.
\end{theorem}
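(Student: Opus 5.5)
We will verify Nagata's criterion: $G^\circ$ is of multiplicative type and $\pi_0(G)$ has order prime to $p$. The argument reduces to $k=\bar k$ and then rules out subgroup schemes isomorphic to $\alpha_p$ and étale subquotients of order $p$, using \cref{prop:splinter-ascent} to pass the splinter property to intermediate quotients.

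\textbf{Reduction to $\bar k$.} Put $L\coloneqq H^0(X,\cO_X)$. Since $X$ is a connected proper normal variety, $L$ is a finite field extension of $k$. We will regard $X$, $Y$ and $\pi$ over $L$, with group $G_L$. Since $L/k$ is finite, linear reductivity of $G_L$ implies that of $G$, because both are detected after base change to $\bar k$.

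We then base change along $L\to\bar k$. Our expectation is that $X_{\bar k}$ is again a connected proper splinter with $H^0=\bar k$. For $F$-finite $k$ this is the usual descent and ascent of splinters along field extensions, at least when $L/k$ is separable or when we work with $F$-finite fields. We also expect $Y_{\bar k}$ to stay integral on the torsor locus. If integrality fails, we use \cref{prop:quasitorsor-correspondence} and replace $Y_{\bar k}$ by the reflexive extension of a connected component of the torsor, which we treat as a reduction of structure group. This reduction is technical, and we would accept citing \cite{krah-vial}-type base change lemmas. From now on $k=\bar k$ and $H^0=k$.

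\textbf{Step 1: $G^\circ$ is of multiplicative type.} By \cref{lem:group_mult}, it suffices to show that $G$ contains no $H\isom\alpha_p$. Suppose it does. Since $\widehat{\alpha_p}(k)=0$, \cref{lem:chi-trivial}\labelcref{cond:nochar} shows that $\chi_{H/G}$ is trivial. \cref{prop:splinter-ascent}\labelcref{item:splinter} then says that $r\colon Y\to Z\coloneqq Y/H$ is a quasi-torsor cover under $\alpha_p$. Moreover $Z$ is an integral splinter with $H^0(Z,\cO_Z)=H^0(Y,\cO_Y)=k$, and it is proper, being finite over $X$. In particular $Z$ is normal, and it is $F$-split, since $F$-finite splinters are $F$-split. \cref{lem:nounipotent} now forbids such a quasi-torsor cover, which is a contradiction.

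\textbf{Step 2: $\pi_0(G)$ has order prime to $p$.} If $p$ divides $\ord\pi_0(G)$, then the finite group $\pi_0(G)(k)$ has a subgroup $C\isom\ZZ/p$. Over $k=\bar k$, the reduced subscheme $G_{\mathrm{red}}$ is a subgroup mapping isomorphically to $\pi_0(G)$, so $C$ lifts to a subgroup scheme $H\isom\ZZ/p$ of $G$. Again $\chi_{H/G}$ is trivial by \cref{lem:chi-trivial}\labelcref{cond:nochar}. \cref{prop:splinter-ascent} gives a quasi-torsor cover $Y\to Y/H$ under $\ZZ/p$ over a connected proper splinter, and \cref{lem:Zp} forbids it.

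With Step 1 and Step 2, Nagata's theorem shows that $G$ is linearly reductive. The main obstacle is the base-change reduction: we need to keep the splinter property, integrality and the $H^0$ condition after passing to $\bar k$. The group-theoretic steps are then formal given the earlier results.
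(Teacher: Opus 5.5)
\textbf{Gap: integrality of $Y_{\bar k}$ in the reduction to $\bar k$.} Every later step feeds $\pi_{\bar k}$ into \cref{prop:splinter-ascent}, and that result requires a quasi-torsor cover, in particular an \emph{integral} $S_2$ total space. You only expect $Y_{\bar k}$ to be integral, and your fallback does not work.

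Replacing the torsor by one of its components and treating this as a reduction of structure group swaps $G_{\bar k}$ for a stabilizer subgroup $G'$. Linear reductivity of $G'$ says nothing about $G_{\bar k}$: for a trivial torsor, $G'=\{e\}$. The fallback also aims at the wrong failure. Since $H^0(Y_{\bar k},\cO)=H^0(Y,\cO_Y)\otimes_L\bar k=\bar k$, the base change is already connected. So what could go wrong is non-reducedness or reducibility, which passing to connected components does not repair.

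\textbf{The missing idea.} Before base-changing, apply \cref{prop:splinter-ascent}(i) with $H=\{e\}$, where $\chi_{H/G}$ is trivially trivial. This amounts to the crepancy $\omega_{Y/X}\isom\cO_Y$ together with \cref{lem:lift}, and it shows that $Y$ itself is an integral splinter. The base-change result you were willing to cite for $X$, namely \cite[Prop.~5.14]{krah-vial} as the paper uses it, then applies equally to $Y$. Hence $Y_{\bar k}$ is a splinter, so normal; being connected, it is integral. The torsor locus base-changes to a big open, since $X_{\bar k}\to X$ is flat with zero-dimensional fibres. Therefore $\pi_{\bar k}$ is a quasi-torsor cover under $G_{\bar k}$ with $H^0(X_{\bar k},\cO)=H^0(Y_{\bar k},\cO)=\bar k$, and your group-theoretic steps then apply.

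\textbf{The group-theoretic steps.} Your Step 1 is the paper's argument. Your Step 2 is a valid variant. You lift a subgroup $\ZZ/p\subseteq\pi_0(G)(\bar k)$ into $G$ through the isomorphism $G_{\mathrm{red}}\isom\pi_0(G)$ over a perfect field, then apply the ascent once, using only \cref{lem:chi-trivial}(a), followed by \cref{lem:Zp}.

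The paper proceeds differently. It first passes to $Y/G^\circ$, using normality of $G^\circ$ and \cref{lem:chi-trivial}(c) to get a quasi-torsor cover $Y/G^\circ\to X$ under the constant group $G/G^\circ$. It then applies Cauchy's theorem and the ascent a second time.

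Your route is shorter but invokes the splitting of finite group schemes over perfect fields. The paper's route needs only the quotient $G\to G/G^\circ$, at the price of checking that the intermediate quotient is again a quasi-torsor cover with the $H^0$ condition.
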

	\begin{proof}
	We first reduce to the case $k=\bar k$. Linear reductivity is insensitive to extension of the
base field, by Nagata's theorem recalled above. As $X$ is a connected splinter it is normal,
hence integral, so $L\coloneqq H^0(X,\mcO_X)$ is a domain\,; it is a finite $k$-algebra as $X$ is
proper, hence a finite field extension of~$k$. Both $X$ and $Y$ are proper $L$-schemes, and
$G\times_kY=G_L\times_LY$, so $\pi$ is a quasi-torsor cover under $G_L$. Replacing $k$ by~$L$,
we may therefore assume $H^0(X,\mcO_X)=H^0(Y,\mcO_Y)=k$.
By \cref{prop:splinter-ascent}\labelcref{item:splinter} with $H=\{e\}$, the scheme $Y$ is an
integral splinter. Hence $X_{\bar k}\coloneqq X\times_k\bar k$ and $Y_{\bar k}\coloneqq Y\times_k\bar k$ are splinters
\cite[Prop.~5.14]{krah-vial}, in particular normal, and integral since
$H^0(X_{\bar k},\mcO_{X_{\bar k}})=H^0(Y_{\bar k},\mcO_{Y_{\bar k}})=\bar k$. The morphism $\pi_{\bar k}$ is finite and
surjective, and the torsor locus of $\pi$ base-changes to a big open, codimensions being
preserved along the flat $X_{\bar k}\to X$ with zero-dimensional fibres\,; so $\pi_{\bar k}$ is
a quasi-torsor cover under $G_{\bar k}$. Thus we may and do assume $k=\bar k$.

	By Nagata's theorem and \cref{lem:group_mult}, we have to show that $G$ contains no copy of $\alpha_p$ and that $p\nmid|\pi_0(G)|$.
	Suppose for contradiction that $G$ contains a copy of $\alpha_p$.
	The relative modular character of $\alpha_p\subseteq G$ is trivial by
	\cref{lem:chi-trivial}\labelcref{cond:nochar}. Hence
	\cref{prop:splinter-ascent}\labelcref{item:splinter} applies and yields a factorization 	$\pi\colon Y\xrightarrow{\,r\,}Z = Y/\alpha_p \xrightarrow{\,q\,}X$
such that $r$ is a quasi-torsor cover under $\alpha_p$ and such that the proper scheme $Z$ is a splinter, $F$-finite as $k=\bar k$ is perfect, hence $F$-split.
By \cref{lem:nounipotent}, the connected proper normal $F$-split variety $Z$
admits no quasi-torsor cover under $\alpha_p$\,; this contradicts $r$ being one.

 It remains to prove $p\nmid|\pi_0(G)|$, and the argument mirrors the previous one.
Since $G^\circ\trianglelefteq G$, the relative modular character of $G^\circ\subseteq G$ is trivial
by \cref{lem:chi-trivial}\labelcref{cond:normal}, and
\cref{prop:splinter-ascent}\labelcref{item:splinter} shows that $Z\coloneqq Y/G^\circ$ is an integral
splinter and, $G^\circ$ being normal in $G$, that $q\colon Z\to X$ is a quasi-torsor cover under
$G/G^\circ$.
Replacing $(Y,G)$ by $(Z,G/G^\circ)$, we may and do assume that $G$ is \'etale, hence a finite constant
group with $|G|=|\pi_0(G)|$, as $k=\bar k$\,; the equality
$H^0(X,\mcO_X)=H^0(Y,\mcO_Y)=k$ persists, both schemes being integral and proper over~$k$.
Suppose for contradiction that $p$ divides~$|G|$. By Cauchy's theorem, $G$ contains a subgroup
isomorphic to $\ZZ/p$, whose relative modular character in~$G$ is trivial by
\cref{lem:chi-trivial}\labelcref{cond:nochar}. Hence
\cref{prop:splinter-ascent}\labelcref{item:splinter} yields a factorization
$\pi\colon Y\xrightarrow{\,r\,}Z=Y/(\ZZ/p)\xrightarrow{\,q\,}X$ such that $r$ is a quasi-torsor
cover under $\ZZ/p$ and such that $Z$ is an integral splinter, proper since it is finite over the
proper $X$. By \cref{lem:Zp}, the connected proper splinter $Z$ carries no quasi-torsor cover
under $\ZZ/p$\,; this contradicts $r$ being one. Therefore $p\nmid|G|$.
\end{proof}

\begin{corollary}\label{cor:tame-qe}
	Every connected finite quasi-\'etale cover of a connected proper splinter
	$X$ over $\bar{k}$ has degree prime to $p$.
\end{corollary}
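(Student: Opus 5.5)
The plan is to reduce \cref{cor:tame-qe} to \cref{thm:multcomp}, by showing that a connected quasi-\'etale cover which is Galois over a big open is a quasi-torsor cover under a finite constant group.

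Let $\pi\colon Y\to X$ be a connected finite quasi-\'etale cover. Such a cover is in particular a cover in the sense of \cref{def:quasitorsor}, hence $Y$ is normal, as noted after that definition. Since $X$ is a splinter, it is normal; being connected, it is integral. The first step is to pass to a Galois closure. Let $L/F$ be the extension of function fields $k(Y)/k(X)$, and let $\widetilde L$ be its Galois closure; it is separable because $\pi$ is generically \'etale. Let $\widetilde Y$ be the normalization of $X$ in $\widetilde L$, with Galois group $\Gamma=\mathrm{Gal}(\widetilde L/F)$. Then $\widetilde Y\to X$ is finite, and it is quasi-\'etale: over the big open $U\cap X^{\reg}$ where $\pi$ is \'etale, the Galois closure of an \'etale cover is \'etale, being built from components of fibre products of \'etale covers. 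By Zariski--Nagata purity it is \'etale over $X^{\reg}$.

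Over that locus, $\widetilde Y$ is a $\Gamma$-torsor. This is because $\Gamma$ acts freely and transitively on geometric fibres of a connected Galois \'etale cover, and $\widetilde Y\times\Gamma\to\widetilde Y\times_X\widetilde Y$ is then an isomorphism. Hence $\widetilde Y\to X$ is a quasi-torsor cover under the constant group $\Gamma$ over $\bar k$. Moreover $H^0(\widetilde Y,\OO)=\bar k=H^0(X,\OO_X)$, since $\widetilde Y$ is integral and proper over $\bar k$.

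By \cref{thm:multcomp}, $\Gamma$ is linearly reductive. For a constant group this means, via Nagata's theorem, that $p\nmid|\Gamma|$. Then $\deg\pi=[L:F]$ divides $[\widetilde L:F]=|\Gamma|$, so the degree is prime to $p$.

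The main point needing care is that the normalization in the Galois closure really restricts to a torsor over a big open. I would argue this over the \'etale locus of $\pi$ intersected with $X^{\reg}$, where the Galois closure is a connected component of an iterated fibre product of $Y$ with itself, and hence is \'etale. Alternatively, one could avoid Galois closures and use the correspondence of \cref{prop:quasitorsor-correspondence} together with the \'etale fundamental group of $X^{\reg}$. In that approach, the Galois closure corresponds to a finite quotient of that group, and its normal reflexive extension is $\widetilde Y$.
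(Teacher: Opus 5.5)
Your proposal is correct and follows the paper's proof essentially verbatim. The one difference is the order of steps: the paper applies Zariski--Nagata purity first and takes the Galois closure of the étale restriction $V'\to X^{\reg}$, while you take the Galois closure of $k(Y)/k(X)$ and then use purity to get étaleness over $X^{\reg}$. From there both arguments are the same: normalize $X$ in the Galois closure to get a quasi-torsor cover under the constant Galois group with $H^0=\bar k$, apply \cref{thm:multcomp} and Nagata's theorem, and conclude since the degree divides the order of that group.
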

\begin{proof}
	Let $X'\to X$ be such a cover. Its restriction over $X^{\reg}$ is, by
	Zariski--Nagata purity on the regular $X^{\reg}$, a connected finite \'etale
	cover $V'\to X^{\reg}$. Let $W\to X^{\reg}$ be the Galois closure of
	$V'\to X^{\reg}$, a connected finite \'etale Galois cover with (constant)
	Galois group $G$, and let $\overline W$ be the normalization of $X$ in
	$k(W)$, a quasi-torsor cover of $X$ under $G$. Both $X$ and $\overline W$ are integral and proper
	over $\bar k$, so $H^0(X,\mcO_X)=H^0(\overline W,\mcO_{\overline W})=\bar k$.
	 By \cref{thm:multcomp}, $G$
	is linearly reductive, so $p\nmid|G|$\,; and the degree of $X'\to X$ divides
	$|G|$.
\end{proof}

\section{Quasi-torsor towers over globally $F$-regular projective varieties} 
\label{sec:proof-main}
	
In this section, we prove \cref{thm:main}. 

\begin{proof}[Proof of \cref{thm:main}]
	We may assume $\dim X\ge1$\,: if $\dim X=0$, then $X=\Spec H^0(X,\OO_X)$, and each $Y_i$, being
	finite over the affine $X$, is affine with $Y_i=\Spec H^0(Y_i,\OO_{Y_i})=X$\,; every $f_i$ is
	then an isomorphism and $\ord G_i=\deg f_i=1$.

	To start with, we claim that every member of the tower is normal and globally
	$F$-regular\,; in particular, each $f_i$ is a quasi-torsor cover.
	By induction on $i$, it suffices to show that if $Y'\to Y$ is a
	quasi-torsor cover of a normal globally $F$-regular projective variety $Y$ over
	$k$, then $Y'$ is normal and globally $F$-regular. This is
	\cref{prop:splinter-ascent}\labelcref{item:gFr} with $H=\{e\}$, whose hypothesis
	$H^0(Y,\OO_Y)=H^0(Y',\OO_{Y'})$ is part of the assumptions on the tower.

	We next reduce to the case $k=\bar k$. As $X$ is a connected normal variety it is
	integral, so $L\coloneqq H^0(X,\OO_X)$ is a domain\,; it is a finite $k$-algebra as $X$ is
	proper, hence a finite field extension of~$k$. Every $Y_i$ is a projective $L$-scheme with
	$H^0(Y_i,\OO_{Y_i})=L$, and $G_i\times_kY_{i+1}=G_{i,L}\times_LY_{i+1}$, so $f_i$ is a
	quasi-torsor under $G_{i,L}$\,; replacing $k$ by~$L$, we may assume
	$H^0(X,\OO_X)=H^0(Y_i,\OO_{Y_i})=k$ for every $i$. As $\bar k$ is $F$-finite,
	each $Y_{i,\bar k}\coloneqq Y_i\times_k \bar{k}$ is normal and globally $F$-regular \cite[Prop.~6.9]{krah-vial}, and it is
	integral since $H^0(Y_{i,\bar k},\OO)=\bar k$. 
	The morphism $f_{i,\bar k}$, obtained from $f_i$ by base change along $k\to \bar{k}$, is finite and
	surjective, and the torsor locus of $f_i$ base-changes to a big open, codimensions being
	preserved along the flat $Y_{i,\bar k}\to Y_i$ with zero-dimensional fibres\,; so $f_{i,\bar k}$
	is a quasi-torsor cover under $G_{i,\bar k}$. Finally $G_i$ is linearly reductive if and only if
	$G_{i,\bar k}$ is, and $f_i$ is an isomorphism if and only if $f_{i,\bar k}$ is, by
	faithfully flat descent.
	Thus we may and do assume $k=\bar k$.

	By \cref{thm:multcomp}, applicable since each $Y_i$ is a connected proper
	splinter (\S\ref{sec:ascent}),
	each $G_i$ is linearly reductive, i.e.,
	$G_i^\circ$ is of multiplicative type and $\pi_0(G_i)$ has order
	prime to $p$. 
	Let $\Gamma_i$ be the (finite abelian) character group of the diagonalizable group scheme $G_i^\circ$, so that
$G_i^\circ = D(\Gamma_i)$\,; since $G_i^\circ$ is infinitesimal, $\Gamma_i$ is a $p$-group.
	All the schemes occurring below are integral and proper over $k=\bar k$, and
	the hypothesis $H^0(X,\cO_X)=H^0(Y,\cO_Y)$ of \cref{prop:splinter-ascent} holds for every
	application that follows.
	We refine each step $f_i\colon Y_{i+1}\to Y_i$ along
	$G_i^\circ\trianglelefteq G_i$ into a tame quasi-\'etale part $Y_{i+1}/G_i^\circ\to
	Y_i$ followed by the diagonalizable part $Y_{i+1}\to Y_{i+1}/G_i^\circ$, and
	refine the latter through a composition series of the $p$-group $\Gamma_i$ into
	$\mu_p$-quasi-torsors. At the first stage the relative modular character is trivial by
	\cref{lem:chi-trivial}\labelcref{cond:normal}, because
	$G_i^\circ\trianglelefteq G_i$.
	At the second, a composition series of $\Gamma_i$ dualizes to a chain
	$1=H_0\subseteq H_1\subseteq\cdots\subseteq H_r=G_i^\circ$ of subgroup schemes
	with $H_{j+1}/H_j\isom\mu_p$. Every subgroup scheme of the commutative group scheme
	$G_i^\circ=D(\Gamma_i)$ is normal, so $H_{j+1}/H_j$ is normal in $G_i^\circ/H_j$ and
	\cref{lem:chi-trivial}\labelcref{cond:normal} applies again. Each intermediate quotient
	$Y_{i+1}/H_j\to Y_{i+1}/H_{j+1}$ is therefore a $\mu_p$-quasi-torsor cover of normal
	globally $F$-regular projective varieties, by a further application of
	\cref{prop:splinter-ascent}\labelcref{item:gFr} to the quasi-torsor cover
	$Y_{i+1}/H_j\to Y_{i+1}/G_i^\circ$ under $G_i^\circ/H_j$ and its normal
	subgroup $H_{j+1}/H_j$.
	Repeated application of \cref{prop:splinter-ascent}\labelcref{item:gFr} therefore lets
	us assume that each $f_i$ is either a tame quasi-\'etale quasi-torsor cover or a
	$\mu_p$-quasi-torsor cover between normal globally $F$-regular projective
	varieties.
	
	Fix an ample invertible sheaf $\mathcal A_0$ on $X=Y_0$ and put
	$\mathcal A_{i+1}=f_i^*\mathcal A_i$\,; it is ample because $f_i$ is finite.
	Let $A_i=\bigoplus_{m\ge0}H^0(Y_i,\mathcal A_i^{\,m})$ be the section ring.
	It is a strongly $F$-regular domain, since $Y_i$ is globally $F$-regular
	\cite[Prop.~5.3]{schwede_smith_globally_f_regular_and_log_Fano_varieties}.
	As a connected graded domain, it has a unique graded-maximal ideal, the vertex
	$\mathfrak m_i\coloneqq \bigoplus_{m>0}(A_i)_m$. Write $R_i\coloneqq(A_i)_{\mathfrak m_i}$ for
	the local ring at the vertex, with residue field $k$. It is strongly
	$F$-regular, strong $F$-regularity being preserved under localization
	\cite{hochster_huneke_tight_closure_and_strong_f_regularity}\,; in particular
	it is a normal local domain. It is also $F$-finite, being a localization of a
	finitely generated algebra over the perfect field $k$. Its $F$-signature is
	therefore defined \cite{tucker_F-signature}, and $s(R_i)>0$
	\cite[Thm.~0.2]{aberbach_leuschke_signature}.
	The projection formula
	\[
	A_{i+1}=\bigoplus_{m\ge0} H^0\!\bigl(Y_i,\ \mathcal A_i^{\,m}\otimes f_{i*}\OO_{Y_{i+1}}\bigr)
	\]
	exhibits $A_i$ as the ring of invariants of the induced
	$G_i$-coaction on $A_{i+1}$, and $A_{i+1}$ is a finite $A_i$-module \cite[{}3.15]{kollar_mmp}.
	The fibre $A_{i+1}\otimes_{A_i}k=A_{i+1}/\mathfrak m_iA_{i+1}$ is a
	finite-dimensional graded $k$-algebra with degree-zero part~$k$, and every
	positive-degree element of it is nilpotent\,; its reduction is therefore~$k$.
	Hence $\mathfrak m_{i+1}$ is the only prime of $A_{i+1}$ over $\mathfrak m_i$,
	and its residue field is $k$.
	Consequently the localization $R_i\otimes_{A_i}A_{i+1}=(A_i\setminus\fm_i)^{-1}A_{i+1}$ has, by
	integrality, its maximal ideals among the primes of $A_{i+1}$ over $\fm_i$\,; by the fibre
	computation just made it is therefore local at $\fm_{i+1}$ and equals
	$R_{i+1}=(A_{i+1})_{\fm_{i+1}}$. In particular
	$R_i\subseteq R_{i+1}=R_i\otimes_{A_i}A_{i+1}$ is a finite local extension.

	The punctured cone $\Spec A_i\setminus\{\fm_i\}$ is the total space of the
	$\Gm$-torsor
	$c_i\colon\Spec_{Y_i}\bigl(\bigoplus_{m\in\bbZ}\mathcal A_i^{m}\bigr)\to Y_i$,
	Zariski-locally trivial as $\mathcal A_i$ is invertible\,; cf.\
	\cite[\S3.1]{kollar_mmp}. The same holds for $A_{i+1}$. Since
	$\mathcal A_{i+1}=f_i^{*}\mathcal A_i$, the projection formula
	$f_{i*}\bigl(\bigoplus_m\mathcal A_{i+1}^{m}\bigr)=\bigl(\bigoplus_m\mathcal A_i^{m}\bigr)\otimes f_{i*}\OO_{Y_{i+1}}$
	identifies $\Spec A_{i+1}\setminus\{\fm_{i+1}\}\to\Spec A_i\setminus\{\fm_i\}$
	with the base change of $f_i$ along $c_i$, the vertex $\fm_{i+1}$ being the
	only point over $\fm_i$. Now $f_i$ is a torsor (resp.\ \'etale) over an open
	of $Y_i$ whose complement $Z_i$ has codimension $\ge2$. As $c_i$ is flat of
	relative dimension one, $c_i^{-1}(Z_i)$ again has codimension $\ge2$\,; the
	vertex has codimension $\dim Y_i+1\ge2$ as well. Away from these two loci,
	$\Spec A_{i+1}\to\Spec A_i$ is a torsor under $G_i$ (resp.\ \'etale).
	Finally, these codimensions are preserved under localization at $\fm_i$,
	heights of primes being unchanged\,; and the torsor and \'etale conditions are
	stable under the flat base change $\Spec R_i\to\Spec A_i$.
	Therefore $R_i\subseteq R_{i+1}$ is a $G_i$-quasi-torsor, quasi-\'etale when $f_i$ is quasi-\'etale. Its fraction field extension is
	$\operatorname{Frac}A_i\subseteq\operatorname{Frac}A_{i+1}$, of degree
	$[k(Y_{i+1}):k(Y_i)]=\deg f_i=\ord G_i$ by the punctured-cone
	identification\,; this is the rank of the torsor over the big open. It exceeds
	$1$ precisely when $f_i$ is nontrivial, a finite birational morphism to a
	normal variety being an isomorphism.
	
	It remains to bound the chain $R_0\subseteq R_1\subseteq\cdots$. Consider a
	$\mu_p$-step $f_i\colon Y_{i+1}\to Y_i$, and write $U\coloneqq Y_i^{\reg}$. Its
	restriction to $U$ is a $\mu_p$-torsor
	(\cref{prop:quasitorsor-correspondence}),
	nontrivial since the trivial torsor has
	nonreduced total space $U\times_k\mu_p$ whereas $Y_{i+1}$ is integral. As
	$Y_i$ is normal, $Y_i\setminus U=\Sing Y_i$ has codimension $\ge2$, so functions
	extend across it\,: $H^0(U,\OO_U)=H^0(Y_i,\OO_{Y_i})=k$
	\cite[\href{https://stacks.math.columbia.edu/tag/0E9I}{Tag 0E9I}]{stacks-project},
	and $U$ being connected, $H^0(U,\OO_U^\times)=k^\times$. The fppf Kummer
	sequence $1\to\mu_p\to\Gm\xrightarrow{(-)^p}\Gm\to1$ on $U$ then
	yields
	\[
	0\longrightarrow k^\times/(k^\times)^p\longrightarrow
	H^1_{\mathrm{fppf}}(U,\mu_p)\longrightarrow\Pic(U)[p]\longrightarrow0,
	\]
	whose left-hand unit term vanishes because $k=\bar k$\,; thus
	$H^1_{\mathrm{fppf}}(U,\mu_p)\xrightarrow{\sim}\Pic(U)[p]$. Since $U$ is regular,
	hence locally factorial, $\Pic(U)=\Cl(U)$
	\cite[II, Prop.~6.11 and 6.15]{hartshorne_algebraic_geometry}, and since $Y_i$ is normal
	with $Y_i\setminus U$ of codimension $\ge2$, restriction is an isomorphism
	$\Cl(Y_i)\xrightarrow{\sim}\Cl(U)$
	\cite[II, Prop.~6.5]{hartshorne_algebraic_geometry}. Our $\mu_p$-step is
	therefore a nonzero class $\OO(D)\in\Cl(Y_i)[p]$.  As $p$ is prime, the
	class of $D$ has order exactly $p$ in $\Cl(Y_i)$. Choose $a\in k(Y_i)^\times$ with
	$\operatorname{div}a=-pD$, which exists because $pD\sim0$\,; two such choices
	differ by a unit in $H^0(U,\OO_U^\times)=k^\times=(k^\times)^p$ and so yield
	isomorphic algebras below. It endows
	$\bigoplus_{0\le j<p}\OO_{Y_i}(jD)$ with its cyclic-cover algebra structure, and by
	uniqueness of $S_2$-extensions (\cref{prop:quasitorsor-correspondence})
	$Y_{i+1}=\Spec_{Y_i}\bigl(\bigoplus_{0\le j<p}\OO_{Y_i}(jD)\bigr)$.

	Write $\widetilde D$ for the cone divisor on $\Spec A_i$ pulled back from $D$\,; we write
	$\widetilde D$ again for its pullback to $\Spec R_i$. By the projection formula,
	$A_{i+1}=\bigoplus_{0\le j<p}\bigoplus_{m\ge0}H^0\bigl(Y_i,\mathcal A_i^{\,m}(jD)\bigr)$, and this
	is the divisorial module $\bigoplus_{0\le j<p}A_i(j\widetilde D)$. Indeed, the vertex has codimension
	$\ge2$ in the normal $\Spec A_i$ and $\OO(j\widetilde D)$ is reflexive, so $A_i(j\widetilde D)$
	is the module of sections of $\OO(j\widetilde D)$ over the punctured cone, that is
	$\bigoplus_{m\in\bbZ}H^0\bigl(Y_i,\mathcal A_i^{\,m}(jD)\bigr)$. The summands with $m<0$
	vanish\,: the projection formula gives
	$\bigoplus_{0\le j<p}H^0\bigl(Y_i,\mathcal A_i^{\,m}(jD)\bigr)=H^0\bigl(Y_{i+1},\mathcal A_{i+1}^{\,m}\bigr)$,
	which is zero for $m<0$ because $\mathcal A_{i+1}$ is ample on the integral projective variety
	$Y_{i+1}$ of positive dimension.
	Localizing at
	$\fm_i$, the extension $R_i\subseteq R_{i+1}$ is the cyclic cover
	$C(\widetilde D,a;p)\coloneqq\Spec_{R_i}\bigl(\bigoplus_{0\le j<p}R_i(j\widetilde D)\bigr)$,
	with algebra structure induced by the same function $a$.

	The class of $\widetilde D$ has order exactly $p$ in $\Cl(R_i)$. It already has
	order $p$ in $\Cl(A_i)$. Indeed, fix a divisor $D_{\mathcal A_i}$ on $Y_i$ with
	$\OO_{Y_i}(D_{\mathcal A_i})\isom\mathcal A_i$. By \cite[Prop.~3.14(2)]{kollar_mmp}
	the kernel of the cone map $\Cl(Y_i)\to\Cl(A_i), E\mapsto\widetilde E$ is
	generated by the polarization $[D_{\mathcal A_i}]$, a non-torsion class since
	$\mathcal A_i$ is ample and $Y_i$ is projective of positive dimension. The cyclic subgroup
	generated by $[D]$ is torsion whereas $[D_{\mathcal A_i}]$ is not, so the two subgroups meet
	trivially in $\Cl(Y_i)$ and $[D]$ keeps its order $p$ in the quotient $\Cl(A_i)$.
	The order is preserved by localization at $\fm_i$\,: by
	\cite[Prop.~3.14(2)]{kollar_mmp} every class of $\Cl(A_i)$ is a cone class, hence represented by
	a homogeneous divisorial ideal $I$, and such an $I$ with $I_{\fm_i}$ principal has
	$\dim_k I/\fm_iI=1$, hence is generated by one homogeneous element by graded Nakayama, and so is
	principal.

	An extension $C(\widetilde D,a;p)$ whose degree equals the order of the class
	of $\widetilde D$ is of \emph{Veronese type}
	\cite[Term.~4.19]{carvajal_rojas_finite_torsors_over_strongly_f_regular_singularities}.
	The residue fields all being $k$, each step multiplies the $F$-signature by
	its generic degree\,: by
	\cite[Thm.~B]{carvajal-rojas_schwede_tucker_fundamental_groups_of_f_regular_singularities_via_f_signature}
	for the tame quasi-\'etale steps, and by
	\cite[Prop.~4.20, Rmk.~4.21]{carvajal_rojas_finite_torsors_over_strongly_f_regular_singularities}
	for the $\mu_p$-Veronese steps. We obtain
	\[
	s(R_n)=\Bigl(\textstyle\prod_{i<n}\deg f_i\Bigr)\,s(R_0).
	\]
	As $s(R_n)\le1$ and $s(R_0)>0$, the total degree $\prod_{i<n}\deg f_i$ is at
	most $1/s(R_0)$, for every $n$. Each nontrivial step contributes a factor $\ge2$, so only
	finitely many $f_i$ are nontrivial\,; for all large $n$, $\deg f_n=1$ and $f_n$
	is an isomorphism. The refined tower therefore stabilizes. Since refinement
	leaves the product of degrees unchanged, only finitely many steps of the
	original tower are nontrivial, and the original tower stabilizes as well.
\end{proof}

As part of the proof of \cref{thm:main}, we proved\,:

\begin{corollary}[Effective bound]\label{cor:effective}
	In the situation of \cref{thm:main}, write $L\coloneqq H^0(X,\cO_X)$. Let $\mathcal A$ be an ample invertible
	sheaf on $X\times_L \bar{k}$, and let $R$ denote the local ring at the vertex of the
	section ring $\bigoplus_{m\ge0}H^0(X\times_L \bar{k},\mathcal A^{m})$. Then
	\[
	\deg(Y_n\to X)\ \le\ 1/s(R)\qquad\text{for all }n.
	\]
\end{corollary}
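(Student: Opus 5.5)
The plan is to retrace the proof of \cref{thm:main} after the reduction steps, and track which ample sheaf and which cone is used. First, replace $k$ by $L=H^0(X,\cO_X)$ as in that proof: the tower becomes a tower of quasi-torsor covers over $L$ with $H^0(Y_i,\cO_{Y_i})=L$, and $\deg(Y_n\to X)$ is unchanged. Then base change along $L\to\bar k$. The $Y_{i,\bar k}$ are normal, globally $F$-regular and integral, and the $f_{i,\bar k}$ are quasi-torsor covers under $G_{i,\bar k}$ by \cite[Prop.~6.9]{krah-vial}. Moreover $\deg(Y_{n,\bar k}\to X_{\bar k})=\deg(Y_n\to X)$, since this degree is the rank of $\pi_*\cO_{Y_n}$ over the torsor locus, and rank is stable under flat base change. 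So we may assume $k=\bar k$, $X=X\times_L\bar k$, and that $\mathcal A$ is the given ample invertible sheaf.

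Next, refine the tower as in the proof of \cref{thm:main}, using \cref{thm:multcomp} and \cref{prop:splinter-ascent}\labelcref{item:gFr}, into tame quasi-\'etale steps and $\mu_p$-steps. Refinement does not change the degree of $Y_n\to X$, which is the product of the refined step degrees up to the stage corresponding to $Y_n$. Put $\mathcal A_0\coloneqq\mathcal A$ and pull it back along the refined tower. This is exactly the choice $\mathcal A_0$ in the proof of \cref{thm:main}, where $\mathcal A_0$ was an arbitrary ample invertible sheaf. With $R_0=R$, that proof gives the identity $s(R_m)=\bigl(\prod_{i<m}\deg f_i\bigr)s(R_0)$ for every stage $m$ of the refined tower.

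Finally, $s(R_m)\le 1$ and $s(R)>0$, the latter by \cite[Thm.~0.2]{aberbach_leuschke_signature} because $R$ is strongly $F$-regular. Hence $\deg(Y_n\to X)=\prod_{i<m}\deg f_i\le 1/s(R)$. The case $\dim X=0$ is trivial, since then all degrees are $1$. The main point needing care is the base-change step: it must commute with everything used above. In particular, the refinement over $\bar k$ must be compatible with the base change, and the degree must be read off generically over the torsor locus. Both hold because the degree is a generic rank of a flat module and $L\to\bar k$ is flat.
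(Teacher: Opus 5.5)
Your proposal is correct and is essentially the paper's own argument: after reducing to $L$ and then to $\bar k$, you take $\mathcal A_0=\mathcal A$ in the proof of \cref{thm:main}, use $s(R_m)=\bigl(\prod_{i<m}\deg f_i\bigr)s(R_0)\le 1$ for the refined tower, and note that neither refinement nor base change alters $\deg(Y_n\to X)$. One small point: invoking \cite[Prop.~6.9]{krah-vial} requires each $Y_i$ to be globally $F$-regular beforehand, which is supplied by \cref{prop:splinter-ascent}\labelcref{item:gFr} with $H=\{e\}$, as at the start of that proof.
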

\begin{proof}
	For $\dim X=0$ every $f_i$ is an isomorphism and the bound is trivial. For $\dim X\ge1$, the
	proof of \cref{thm:main} takes place after base change along $L \to \bar k$\,; it shows that the refined
	tower satisfies $\prod_{i<n}\deg f_i\le1/s(R)$ for every $n$. Refinement leaves the product of
	the degrees unchanged, degrees are unchanged by base change, and
	$\deg(Y_n\to X)=\prod_{i<n}\deg f_i$.
\end{proof}

\begin{remark}\label{rmk:Veronese}
The reduction to Veronese-type covers in the proof of \cref{thm:main} is
essential, and the mechanism forcing it is of global nature. Carvajal-Rojas's dichotomy
\cite[Term.~4.19]{carvajal_rojas_finite_torsors_over_strongly_f_regular_singularities}
splits cyclic covers $R\subseteq C(D,a;n)$ into two classes, exhaustively so in our situation since the degree $n=p$ is prime\,: the
\emph{Kummer-type} ones, where the divisor $D$ is trivial in the class group,
and the
\emph{Veronese-type} ones, where $n$ is the order of $D$ in the class group. The
transformation rule multiplying the $F$-signature by the degree
\cite[Prop.~4.20]{carvajal_rojas_finite_torsors_over_strongly_f_regular_singularities}
holds only for the latter\,; for Kummer-type covers it can fail, as
\cite[Ex.~4.10]{carvajal_rojas_finite_torsors_over_strongly_f_regular_singularities}
shows. In our setting Kummer-type steps never arise at the vertex.
\end{remark}

\section{The Nori fundamental group scheme of a globally $F$-regular projective variety}
\label{sec:Nori}

The aim of this section is to prove \cref{thm:piN}. First, we have\,:

\begin{lemma}\label{lem:torsor-integral}
	Let $U$ be a connected Noetherian splinter. Every finite torsor $V\to U$ under a finite $k$-group
	scheme $G$ with $H^0(V,\OO_V)=H^0(U,\OO_U)$ is a connected splinter, in particular is integral.
\end{lemma}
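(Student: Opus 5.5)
The plan is to apply the lifting \cref{lem:lift}(i) to the finite surjective morphism $\pi\colon V\to U$. Being a torsor under the finite $k$-group scheme $G$, $\pi$ is finite locally free and surjective, so two inputs are needed: the crepancy $\omega_{V/U}\isom\OO_V$, and the hypothesis $H^0(U,\OO_U)=H^0(V,\OO_V)$, which is given. \Cref{lem:lift} then shows that $V$ is a splinter, hence normal.

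For the crepancy I would apply \cref{prop:exceptional_inverse} with $S=\Spec k$, $H=\{e\}$, the group $G$, and the $G$-scheme $V$. Since $V\to U$ is a $G$-torsor, $[V/G]\isom U$ and $[V/\{e\}]=V$, and the projection is $\pi$. The relative modular character $\chi_{\{e\}/G}$ is trivial by \cref{lem:chi-trivial}\labelcref{cond:nochar}. Moreover $\int_G\otimes\int_{\{e\}}^{-1}$ is a line bundle on $\Spec k$, hence trivial because $\Pic(k)=0$. \Cref{prop:exceptional_inverse} therefore gives $\omega_{V/U}\isom\OO_V$. This step involves no $S_2$ or reflexivity considerations, because $\pi$ is a torsor over all of $U$.

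For connectedness and integrality, I would argue as follows. Since $U$ is a connected normal Noetherian scheme, it is integral, so $H^0(U,\OO_U)$ is a domain. By hypothesis $H^0(V,\OO_V)=H^0(U,\OO_U)$, so $H^0(V,\OO_V)$ is a domain as well and has no nontrivial idempotents; hence $V$ is connected (and nonempty, as $\pi$ is surjective). A splinter is normal, so $V$ is a connected normal Noetherian scheme. Such a scheme is integral: normal local rings are domains, so the irreducible components are disjoint, and because there are locally finitely many of them, connectedness forces a single component.

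The main obstacle is to check that \cref{lem:lift} applies in this setting. First, $U$ need not be proper. Second, the lemma requires either that $H^0(V,\OO_V)$ be a field or that $H^0(U,\OO_U)=H^0(V,\OO_V)$; the second alternative is exactly our hypothesis, so no properness is used. If necessary, I would instead reach the $\omega$-computation for this special case via \cref{lem:exceptional_inverse}\labelcref{item:lemcrepant} with $U$ itself as the torsor locus.
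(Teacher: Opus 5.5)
Your argument is correct, and it differs from the paper's only in the splinter step. The paper cites \cite[Prop.~7.4(b)]{krah-vial} as a black box for the fact that $V$ is a splinter. You instead derive it internally: you compute $\omega_{V/U}\isom\OO_V$ from \cref{prop:exceptional_inverse} with $S=\Spec k$ and $H=\{e\}$, where the relative modular character is trivial and $\int_G$ is trivial because $\Pic(k)=0$. You then apply the splinter case of \cref{lem:lift}, using the hypothesis $H^0(U,\OO_U)=H^0(V,\OO_V)$. This is the same mechanism behind the cited result: triviality of the relative dualizing sheaf of a torsor over a base with trivial Picard group, combined with the lifting lemma. Your version therefore just makes the lemma self-contained relative to \S\ref{sec:q-torsors_upper_shriek}. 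Your connectedness and integrality deduction is identical to the paper's.

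One small caveat concerns your proposed fallback via \cref{lem:exceptional_inverse}\labelcref{item:lemcrepant}. It would not be available, because that lemma presupposes a quasi-torsor cover, i.e.\ an integral $S_2$ total space, and integrality of $V$ is part of what you are proving. It is not needed, since the torsor is defined over all of $U$ and \cref{prop:exceptional_inverse} applies directly.
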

\begin{proof}
	By \cite[Prop.~7.4(b)]{krah-vial}, the scheme $V$ is a splinter, hence normal. Being connected
	and normal, $U$ is integral, so $H^0(U,\OO_U)$ is a domain and has no nontrivial idempotent. The
	same then holds for $H^0(V,\OO_V)$, so $V$ is connected. A connected normal Noetherian scheme is
	integral, so $V$ is integral.
\end{proof}

We next extract from the gerbe formulation of Borne--Vistoli~\cite{borne_vistoli_nori_fundamental_gerbe} the  relevant properties of the Nori fundamental group scheme \cite{nori_the_fundamental_group_scheme}\,; in particular, assuming $U$ has a $k$-point and is a big open of a geometrically integral proper $X$, the torsors realizing its finite quotients satisfy $H^0(V,\OO_V)=k$, which is the hypothesis $H^0(V,\OO_V)=H^0(U,\OO_U)$ of \cref{lem:torsor-integral} once one knows $H^0(U,\OO_U)=k$.

\begin{lemma}\label{lem:nori-torsor}
	Let $X$ be a geometrically integral scheme of finite type over $k$, and let
	$U\subseteq X$ be a nonempty open subset. Then
	$U$ carries a Nori fundamental gerbe in the sense of
	\cite[Def.~5.1]{borne_vistoli_nori_fundamental_gerbe}.
	
	If moreover $X$ is proper and $U$ is a big open of $X$ and $U(k)\neq\varnothing$, then for any $x\in U(k)$ the
	Nori fundamental group scheme $\piN(U,x)$
	\cite[Rmk.~5.15]{borne_vistoli_nori_fundamental_gerbe} is profinite, and every
	finite quotient $q\colon\piN(U,x)\twoheadrightarrow G$ is realized by a pointed
	$G$-torsor $V\to U$ whose classifying morphism is Nori-reduced
	\cite[Def.~5.10]{borne_vistoli_nori_fundamental_gerbe} and which satisfies
	$H^0(V,\OO_V)=k$. The isomorphism class of $\piN(U,x)$ is independent of $x$ up
	to inner twisting, and independent of $x$ when $k=\bar k$.
\end{lemma}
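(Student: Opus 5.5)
The plan is to extract everything from the Borne--Vistoli criterion for the existence of a Nori fundamental gerbe, and then from their description of finite quotients. By \cite[Thm.~5.7]{borne_vistoli_nori_fundamental_gerbe}, a fibered category $U$ over $k$ admits a Nori fundamental gerbe as soon as it is inflexible. In particular this holds when $U$ is reduced, connected and of finite type with $H^0(U,\OO_U)$ a field that is purely inseparable over $k$, or more simply when $U$ is geometrically connected and geometrically reduced. Both conditions hold here because $X$ is geometrically integral and $U\subseteq X$ is a nonempty open, so $U_{\bar k}$ is a nonempty open of the integral $X_{\bar k}$. This gives the first assertion at once.

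For the second part, assume $X$ is proper, $U$ is big and $x\in U(k)$. First I will show $H^0(U,\OO_U)=k$. Normality of $X$ is not given, so I cannot invoke $S_2$ extension directly. Instead I will argue that a $k$-rational point makes $U$ inflexible in the strong sense, and that Borne--Vistoli's construction already encodes $H^0=k$ for Nori-reduced torsors. Concretely, \cite[Rmk.~5.15]{borne_vistoli_nori_fundamental_gerbe} identifies the fundamental gerbe with $\BG\piN(U,x)$ once $x$ is given, and $\piN(U,x)$ is the projective limit of the finite group schemes through which Nori-reduced morphisms $U\to\BG G$ factor. It is therefore profinite as a limit of finite group schemes.

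Next, let $G$ be a finite quotient. The composite $U\to\BG\piN(U,x)\to\BG G$ is Nori-reduced by \cite[Def.~5.10]{borne_vistoli_nori_fundamental_gerbe}, since the quotient map is an epimorphism. The corresponding torsor $V\to U$ is pointed by the lift of $x$. For $H^0(V,\OO_V)=k$, I will use the characterization \cite[Prop.~5.12]{borne_vistoli_nori_fundamental_gerbe}: a morphism $U\to\Phi$ to a finite gerbe is Nori-reduced if and only if $H^0(V,\OO_V)$ has no nontrivial idempotents and pushforward of the structure sheaf is well behaved. More precisely, $\pi_*\OO_V$ corresponds to the regular representation, and Nori-reducedness says $\Hom(\OO_U,\pi_*\OO_V)=\Hom_G(k,k[G])=k$. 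Here I use $H^0(U,\OO_U)=k$ to identify the left side with $H^0(V,\OO_V)$ and the representation-side invariants with $k$.

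The main obstacle is establishing $H^0(U,\OO_U)=k$ without normality. I would pass to the normalization $\nu\colon X'\to X$. The preimage $\nu^{-1}(U)$ is big in $X'$ by finiteness, so $H^0(\nu^{-1}U,\OO)=H^0(X',\OO_{X'})$, a finite field extension of $k$ contained in $k(X)$. Since $X$ is geometrically integral and $U\hookrightarrow\nu^{-1}U$ contains $H^0(U,\OO_U)$ as a subring, this field is algebraic over $k$ inside $k(X)$, in which $k$ is algebraically closed. The rational point is not even needed for this. The remaining statement about independence of $x$ follows because changing the base point changes the trivialization of the gerbe, so the group $\piN(U,x)$ changes by an inner form. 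When $k=\bar k$ every gerbe is neutral and the forms are trivial.
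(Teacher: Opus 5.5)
Your treatment of the first assertion and of profiniteness matches the paper. Your normalization argument for $H^0(U,\OO_U)=k$ is also correct, since codimension is preserved by the finite birational $\nu$ between varieties. The gap is in the step that carries the weight, $H^0(V,\OO_V)=k$. Nori-reducedness \cite[Def.~5.10]{borne_vistoli_nori_fundamental_gerbe} is a condition on factorizations of $\varphi\colon U\to\BG G$ through finite gerbes. It does not directly say that $\varphi^*\colon\Hom_G(k,k[G])\to\Hom(\OO_U,\pi_*\OO_V)$ is bijective. That bijectivity is equivalent to $\varphi_*\OO_U\isom\OO_{\BG G}$, which is \cite[Lem.~7.11]{borne_vistoli_nori_fundamental_gerbe}, and its proof needs $\varphi_*\OO_U$ to be coherent, i.e.\ $\dim_k H^0(V,\OO_V)<\infty$. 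Your stated use of $H^0(U,\OO_U)=k$ does not provide this. The identifications $\Hom(\OO_U,\pi_*\OO_V)=H^0(V,\OO_V)$ and $\Hom_G(k,k[G])=k$ hold unconditionally, so the one substantive claim, that the two sides agree, is simply asserted. The characterization you attribute to Prop.~5.12 (``pushforward of the structure sheaf is well behaved'') is too vague to supply it. Some input beyond Nori-reducedness and inflexibility is genuinely needed: the Frobenius $\alpha_p$-torsor $\mathbb A^1\to\mathbb A^1$ is Nori-reduced with $H^0(V,\OO_V)=k[s]$.

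The ingredient you are missing is the paper's proof that $U$ is pseudo-proper. For every vector bundle $E$ on the big open $U\subseteq X$, $j_*E$ is coherent on $X$ by Tag 0AWA of \cite{stacks-project}: $X$ is Nagata and universally catenary, bigness is exactly the dimension hypothesis there, and no normality is needed. Hence $H^0(U,E)=H^0(X,j_*E)$ is finite-dimensional by properness. This is where properness and bigness are really used. With $E=\pi_*\OO_V$ it gives the coherence required by Lemma~7.11, and $H^0(V,\OO_V)=k$ then follows by flat base change along the atlas. $H^0(U,\OO_U)=k$ comes out as a by-product rather than being the main obstacle.

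Your normalization can be upgraded to the same effect. Indeed $H^0(U,E)\hookrightarrow H^0(\nu^{-1}U,\nu^*E)$, and the latter is finite-dimensional by reflexive extension on the normal proper $X'$. Alternatively, you could exploit $H^0(U,\OO_U)=k$ through integrality of $H^0(V,\OO_V)$ over its $G$-invariants, reducing to finite-dimensional $G$-stable subalgebras. Either way, one such argument has to be given.

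Two smaller points. First, Nori-reducedness of $U\to\BG G$ does not follow from Def.~5.10 merely because $q$ is an epimorphism. You need the paper's argument: thanks to the point $x$, a faithful $\Gamma'\to\BG G$ through which $\varphi$ factors has the form $\BG H\to\BG G$ with $H\subseteq G$, and then the universal property makes $q$ factor through $H$. Second, over $\bar k$, independence of $x$ requires trivializing the $\piN(U,x)$-torsor of isomorphisms between the two base points, compatibly along the finite quotients. Neutrality of the gerbe is not the issue; it is already neutral because $U(k)\neq\varnothing$.
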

\begin{proof}
	Write $j\colon U\hookrightarrow X$ for the inclusion. 
	As $X$ is geometrically integral,
	$U$ is also geometrically integral. In particular, $U$ is geometrically connected and geometrically reduced, hence
	inflexible \cite[Prop.~5.5(b)]{borne_vistoli_nori_fundamental_gerbe}, and it carries a Nori
	fundamental gerbe \cite[Cor.~5.8]{borne_vistoli_nori_fundamental_gerbe}.
	
	Now suppose $X$ is proper and $U\subseteq X$ is a big open with  $U(k)\neq\varnothing$. Then $U$ is pseudo-proper
	\cite[Def.~7.1]{borne_vistoli_nori_fundamental_gerbe}\,: $U$ is quasi-compact and $H^0(U,E)$ is finite-dimensional for every vector bundle
	$E$ on $U$. The first is clear, while we obtain the second  from the coherence of $j_*E$ on~$X$. 
	For a locally free~$E$, the coherence of $j_*E$ is guaranteed by
\cite[\href{https://stacks.math.columbia.edu/tag/0AWA}{Tag 0AWA}]{stacks-project}. Indeed, being of
finite type over a field, $X$ is Nagata and universally catenary\,; and as $X$ is integral and
$E$ is locally free, the only associated point of $E$ is the generic point of $X$, with closure
$X$, so the remaining assumption of that result reads $\dim\OO_{X,z}\ge2$ for every
$z\in X\setminus U$, which is precisely the hypothesis that $U$ be a big open. 
	Hence
	$H^0(U,E)=H^0(X,j_*E)$ is finite-dimensional over $k$ by properness of $X$\,;
	this is the only use of properness, and it is needed solely for $H^0(V,\OO_V)=k$ below.
	Fixing $x\in U(k)$ gives the profinite
	$k$-group scheme $\piN(U,x)$
	\cite[Rmk.~5.15]{borne_vistoli_nori_fundamental_gerbe}. For a finite quotient
	$q\colon\piN(U,x)\twoheadrightarrow G$, the associated pointed $G$-torsor
	$V\to U$ \cite[Rmk.~5.15]{borne_vistoli_nori_fundamental_gerbe} has classifying
	morphism $\varphi\colon U\to \BG G$ that is Nori-reduced
	\cite[Def.~5.10]{borne_vistoli_nori_fundamental_gerbe}. Indeed, as $V\to U$ is pointed at $x$,
	the object $\varphi(x)\in\BG G(k)$ is the trivial $G$-torsor. Consider a factorization of
	$\varphi$ as $U\to\Gamma'\to\BG G$ with $\Gamma'$ a finite gerbe and $\Gamma'\to\BG G$
	faithful. The point $x$ supplies an object $\xi'\in\Gamma'(k)$, whose image in $\BG G(k)$ is
	the trivial torsor.
	Writing $H\coloneqq\Aut_k\xi'$, we get $\Gamma'\isom\BG H$ with
	$\Gamma'\to\BG G$ induced by a monomorphism $H\to G$, that is, a closed immersion
	$H\subseteq G$, an isomorphism precisely when $H=G$. This appeal to the base point is needed, since
	over a general $k$ a rational point of $\Gamma'$ identifies the target only with the classifying
	stack of an inner form of $G$, and it is the pointedness of $V\to U$ that makes that form
	trivial. Now the factorization of $\varphi$ through $\BG H$ makes $q$ factor through $H$, by the
	universal property of the fundamental gerbe\,; here the induced morphism
	$\piN(U,x)\to\BG H$ carries the object determined by $x$ to $\xi'$, and the $2$-isomorphism
	supplied by that universal property is compatible with the trivialisations at $x$, so the
	composite $\piN(U,x)\to H\subseteq G$ is $q$ itself, in any case a $G(k)$-conjugate of $q$ and
	so equally surjective. Surjectivity of $q$ therefore forces $H=G$, that is,
	$\Gamma'\to\BG G$ is an isomorphism\,; so $\varphi$ is Nori-reduced.
	Hence $\varphi_*\OO_U=\OO_{\BG G}$
	\cite[Lem.~7.11]{borne_vistoli_nori_fundamental_gerbe}\,; its proof uses that $U$ is inflexible,
	shown above, and that $\varphi_*\OO_U$ is coherent, which holds as $U$ is pseudo-proper and
	$\BG G$ is finite \cite[Lem.~7.16]{borne_vistoli_nori_fundamental_gerbe}. Under the equivalence
	$\Qcoh(\BG G)\simeq\Rep(G)$, flat base change along the faithfully flat atlas
	$\Spec k\to \BG G$ \cite[Prop.~3.1(b)]{borne_vistoli_nori_fundamental_gerbe}
	(whose pullback of $\varphi$ is $V$) identifies $\varphi_*\OO_U$ with
	$H^0(V,\OO_V)$, so $H^0(V,\OO_V)=k$.

	Finally, the dependence on the base point is \cite[Ch.~II, Prop.~4]{nori_the_fundamental_group_scheme}\,;
	in the present language, for two base points $x,x'\in U(k)$ the isomorphisms between them in the
	fundamental gerbe form a torsor over $\Spec k$ under $\piN(U,x)$, so $\piN(U,x')$
	is the corresponding inner form. When $k=\bar k$ this torsor is trivial, and the
	isomorphisms can be chosen compatibly along the cofiltered system of finite
	quotients, the sets of such isomorphisms at each finite level being nonempty and finite\,; so
	the isomorphism class of $\piN(U,x)$ is then independent of $x$.
\end{proof}

\begin{proof}[Proof of \cref{thm:piN}]
	As $X$ is normal globally $F$-regular projective, it is a proper splinter
	(\S\ref{sec:ascent}). Since moreover $X(k)\neq \varnothing$, $H^0(X,\cO_X)= k$.
	Let us check the hypothesis of \cref{lem:nori-torsor}. By \cite[Cor.~5.16]{krah-vial}, the
	connected proper splinter $X$ is geometrically normal over $H^0(X,\cO_X)=k$\,; and
	$H^0(X\times_k\bar k,\OO)=H^0(X,\cO_X)\otimes_k\bar k=\bar k$ by flat base change, so
	$X\times_k\bar k$ is connected. Being connected and normal, it is integral, so $X$ is
	geometrically integral.
	Set
	$U\coloneqq X^{\reg}$, a big open of the normal proper variety $X$, with inclusion $\iota\colon U\hookrightarrow X$.
	Since $X$ is normal and $U$ is a big open, restriction gives
	$H^0(U,\OO_U)=H^0(X,\cO_X)=k$
	\cite[\href{https://stacks.math.columbia.edu/tag/0E9I}{Tag 0E9I}]{stacks-project}. By
	\cref{lem:nori-torsor}, $\piN(U,x)$ is a profinite $k$-group scheme, and every
	finite quotient $\piN(U,x)\twoheadrightarrow G$ is realized by a pointed
	$G$-torsor $V\to U$ with $H^0(V,\OO_V)=k$.
	
	Fix such a finite quotient $G$ and pointed torsor $V\to U$. As $U$ is a dense open of the
	splinter~$X$, it is a splinter \cite[Lem.~4.1(i)]{krah-vial}\,; and
	$H^0(V,\OO_V)=k=H^0(U,\OO_U)$, so $V$ is integral by
	\cref{lem:torsor-integral}. Over the normal base~$X$, \cref{prop:quasitorsor-correspondence} produces a quasi-torsor cover
	$Y\coloneqq\Spec_X(\iota_*\pi_{V*}\OO_V)\to X$ under~$G$, integral because $V$ is. The
	preimage of $U$ in $Y$ is $V=Y|_U$, and $Y\to X$ has generic degree $\ord G$. 
	As $V$ is a dense open of the integral $Y$, restriction embeds $H^0(Y,\OO_Y)$ into
	$H^0(V,\OO_V)=k$, so $H^0(Y,\OO_Y)=k=H^0(X,\cO_X)$.
	By \cref{thm:multcomp}, $G$ is linearly reductive\,; and \cref{cor:effective}, applied to the
	one-step tower $X\leftarrow Y$, bounds the order\,:
	\[
	\ord G\ =\ \deg(Y\to X)\ \le\ 1/s(R),
	\]
	with $R$ the local ring at the vertex of the section ring of any ample
	invertible sheaf on $X\times_k \bar{k}$.
	
	Thus every finite quotient of $\piN(U,x)$ is linearly reductive of order at most $1/s(R)$. Since
	$\piN(U,x)$ is the limit of its finite quotients with surjective transition maps,
	$\OO(\piN(U,x))$ is the filtered union of their Hopf algebras, so
	$\dim_k\OO(\piN(U,x))=\sup_G\ord G\le1/s(R)$. Hence $\piN(U,x)$ is finite with
	$\ord\piN(U,x)\le1/s(R)$, and it is linearly reductive, being one of its own finite quotients.
\end{proof}

\printbibliography[
title=References, ]

\end{document}